\documentclass[11pt]{amsart}


\newtheorem{theorem}{Theorem}[section]

\newtheorem{corollary}[theorem]{Corollary}

\newtheorem{definition}[theorem]{Definition}
\newtheorem{lemma}[theorem]{Lemma}

\newtheorem{proposition}[theorem]{Proposition}

\theoremstyle{definition}
\newtheorem{remark}[theorem]{Remark}
\newtheorem{question}[theorem]{Question}

\usepackage[colorlinks, bookmarks=true]{hyperref}
\usepackage{color,graphicx,shortvrb}

\usepackage{ulem} 
\usepackage{tikz}
\usepackage{comment}
\usepackage{enumerate}
\usepackage{amsmath}
\usepackage{amssymb}

\usepackage[utf8]{inputenc}

\usepackage{cancel}


\def\query#1{\setlength\marginparwidth{80pt}%
\marginpar{\raggedright\fontsize{10}{10}\selectfont\itshape
\hrule\smallskip
{\textcolor{red}{#1}}\par\smallskip\hrule}}

\newcommand{\trm}{\mathcal{T}}

\newcommand{\isom}{\mathrm{ISO}}

\newcommand{\tri}[1]{| \! | \! | {#1} | \! | \! |}
\newcommand{\one}{\mathbf{1}}

\newcommand{\spn}{\mathrm{span}}
\newcommand{\vr}{\varepsilon}

\newcommand{\N}{\mathbb{N}}

\newcommand{\R}{\mathbb{R}}

\newcommand{\mult}{\mathbf{M}}

\newcommand{\norm}{\| \cdot \| }

\newcommand{\triple}[1]{{\left\vert\kern-0.25ex\left\vert\kern-0.25ex\left\vert #1 
    \right\vert\kern-0.25ex\right\vert\kern-0.25ex\right\vert}}

\renewcommand{\emph}[1]{{\it{#1}}}

\parskip=3pt

\begin{document}
\title{Lattice renormings of $C_0(X)$ spaces}

\author{T.~Oikhberg}
\address{Dept. of Mathematics, University of Illinois, Urbana IL 61801, USA}
\email{oikhberg@illinois.edu}

\author{M.A.~Tursi}
\address{Geneva, FL 32732, USA}
\email{maryangelica.tursi@gmail.com}

\maketitle

\begin{abstract}
Suppose $X$ is a locally compact Polish space, and $G$ is a group of lattice isometries of $C_0(X)$ which satisfies certain conditions. 
Then we can equip $C_0(X)$ with an equivalent lattice norm $\tri{ \cdot }$ so that $G$ is the group of lattice isometries of $(C_0(X), \tri{ \cdot })$.
As an application, we show that for any locally compact Polish group $G$ there exists a locally compact  Polish space $X$, and an lattice norm $\tri{ \cdot }$ on $C_0(X)$, so that $G$ is the group of lattice isometries of $(C_0(X), \tri{ \cdot })$.
\end{abstract}

 \section{Introduction}\label{s:displayable}\label{s:intro}

\footnote{To Bill Johnson, friend, mentor, discoverer of many beautiful results}

 In this paper, we strive to renorm a Banach lattice to produce a prescribed group of lattice isometries.
 Recall that a map $T : E \to F$ ($E, F$ are Banach lattices) is called a \emph{lattice isomorphism} if it preserves lattice operations (it suffices to verify $T(x \vee y)  = Tx \vee Ty$ for any $x, y \in E$).
 Further, $T$ is called a \emph{lattice isomorphism} if it is invertible, and both $T$ and $T^{-1}$ are lattice homomorphisms. An isometric  lattice isomorphism is referred to as a \emph{lattice isometry}.
 
 The problem of equipping a Banach space with a new norm with a prescribed group of isometries has a long pedigree. For instance, \cite{Be} shows that any separable Banach space can be renormed to have only the trivial isometries.
 The separability assumption was later removed in \cite{Jar}.
 A more general problem is examined in \cite{FeG_TAMS10} and \cite{Fer11}: given a separable Banach space $Z$, and a group $G$, to equip $Z$ with an equivalent norm $\tri{ \cdot }$ so that $G$ is the group of isometries of $(Z, \tri{ \cdot })$.

Similar problems for Banach lattices have, so far, received less attention. In \cite{OT_AM}, we show that any separable AM-space (in other words, a sublattice of a $C(K)$ space) can be renormed to have only trivial isometries.
In the present paper, we restrict our attention to separable $C_0(X)$ spaces. Our goal is to renorm such a space to obtain a prescribed group of isometries (Theorem \ref{t:main-theorem}, below).

Begin by adapting some definitions from \cite{Fer11} to the lattice setting.
For a Banach lattice $E$, we denote by $\isom(E)$ the group of lattice isometries of $E$ (here and throughout the paper, all isometries are assumed to be surjective).
We equip $\isom(E)$ with the strong operator topology, inherited from $B(E)$.
A \textit{representation} of a topological group $G$ on a Banach lattice $E$ is a continuous group homomorphism $\rho : G \to \isom(E)$; the continuity means that, for any $x\in E$, the map $G \to E : g \mapsto \rho( g)(x)$ is continuous.
A representation is \textit{topologically faithful} if $\rho$ is also a homeomorphism between $G$ and its image $\rho(G)$.  Finally, a \textit{display} of a topological group $G$ is a topologically faithful representation $\rho:G \rightarrow \isom(E)$ such that $E$ can be renormed with an equivalent lattice norm $\triple{\cdot}$ so that $\rho(G) = \isom(E,\triple{\cdot})$.  \\


Before stating the main results of the paper, we need to describe the structure of lattice isometries of spaces of continuous functions.

\begin{proposition}\label{p:isomorphism-characterization}
	Let $T:C_0(X) \rightarrow C_0(Y)$ be a lattice isomorphism, with $X$ and $Y$ locally compact Hausdorff.  Then there exists a homeomorphism $\phi:Y\rightarrow X$ and a continuous map $a:Y\rightarrow (0,\infty)$ with $0 < \inf a \leq \sup a < \infty$, such that for all $f\in C_0(X)$ and $y \in Y$, 
	\[ [Tf] (y) = a(y) \cdot f( \phi(y) ). \]
	Moreover, $a$ and $\phi$ representing $T$ in this fashion are unique.
\end{proposition}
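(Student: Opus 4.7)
The plan is to identify points of $X$ with nonzero real-valued lattice homomorphisms on $C_0(X)$, and to pull the evaluation functionals $\delta_y \colon g \mapsto g(y)$ on $C_0(Y)$ back through $T$. The first step is the structural claim that every nonzero lattice homomorphism $\chi \colon C_0(X) \to \R$ has the form $\chi(f) = s\,f(x)$ for a unique pair $(x, s) \in X \times (0, \infty)$. Indeed, $\ker \chi$ is a closed lattice ideal of $C_0(X)$ of codimension one; closed lattice ideals of $C_0(X)$ correspond bijectively to open subsets $U \subseteq X$ via $U \mapsto \{f : f|_{X \setminus U} \equiv 0\}$, with quotient isomorphic to $C_0(X \setminus U)$. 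Codimension one forces $X \setminus U$ to be a single point $\{x\}$, and the fact that lattice homomorphisms to $\R$ are automatically positive then yields $\chi(f) = s\,f(x)$ with $s > 0$. Applying this to $\chi_y := \delta_y \circ T$, which is nonzero because $T$ is a surjective lattice isomorphism and $\delta_y \ne 0$, defines $\phi(y) \in X$ and $a(y) \in (0, \infty)$ with $[Tf](y) = a(y) f(\phi(y))$; uniqueness of $(x, s)$ produces the uniqueness of $\phi$ and $a$.

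Next I would verify that $\phi$ is bijective and $a$ is bounded on both sides. Running the same construction for $T^{-1}$ produces a map $\psi \colon X \to Y$; composing the two representations and appealing to the uniqueness just established gives $\psi \circ \phi = \id_Y$ and $\phi \circ \psi = \id_X$, so $\phi$ is a bijection. For $\sup a \le \|T\|$, given $y \in Y$, local compactness together with a Urysohn argument produces $f \in C_0(X)$ with $0 \le f \le 1$ and $f(\phi(y)) = 1$, whence $a(y) = [Tf](y) \le \|T\|$. A symmetric argument using $T^{-1}$ gives $\inf a \ge 1/\|T^{-1}\| > 0$.

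Finally, for continuity of $\phi$ at $y_0$, suppose for contradiction that a net $y_\alpha \to y_0$ has $\phi(y_\alpha) \not\to \phi(y_0)$. Passing to a subnet, we may assume either that $\phi(y_\alpha)$ eventually leaves every compact subset of $X$, or that $\phi(y_\alpha) \to x \ne \phi(y_0)$. In the first case, pick $f \in C_0(X)$ with $f(\phi(y_0)) = 1$; then $a(y_\alpha) \le \|T\|$ together with $f \in C_0(X)$ force $[Tf](y_\alpha) = a(y_\alpha) f(\phi(y_\alpha)) \to 0$, contradicting the continuity $[Tf](y_\alpha) \to [Tf](y_0) = a(y_0) > 0$. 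The second case is handled by additionally demanding $f(x) = 0$. The same argument applied to $T^{-1}$ shows $\phi^{-1}$ is continuous, so $\phi$ is a homeomorphism. For continuity of $a$, near $y_0$ choose $f$ with $f(\phi(y_0)) \ne 0$; then $f \circ \phi$ is nonzero on a neighborhood of $y_0$ and $a = [Tf]/(f \circ \phi)$ there, hence continuous. The principal obstacle is the first step, the identification of nonzero lattice homomorphisms on $C_0(X)$ with scaled point evaluations; once that lattice-theoretic version of Banach--Stone is in hand, the remaining verifications are routine.
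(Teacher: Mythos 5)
Your proof is correct, but it takes a more self-contained route than the paper. The paper simply invokes the representation theorem for lattice homomorphisms $T:C_0(X)\to C_0(Y)$ (Meyer--Nieberg, Theorem 3.2.10), which already supplies the weighted-composition form $[Tf](y)=a(y)f(\phi(y))$ together with the continuity of $a$ and $\phi$ on the set where $a$ does not vanish; the remaining work in the paper is only to observe that bijectivity of $T$ forces that set to be all of $Y$, that $\phi$ is a bijection with continuous inverse (via the analogous representation of $T^{-1}$ and the identity $T^{-1*}T^*=I$), and that $a$ is bounded above and below. You instead rebuild the representation from scratch: you classify nonzero real-valued lattice homomorphisms on $C_0(X)$ as scaled point evaluations via closed ideals of codimension one, apply this to $\delta_y\circ T$, and then must separately prove the continuity of $\phi$ (your one-point-compactification subnet argument, splitting into the ``escapes to infinity'' and ``converges to a wrong point'' cases, is sound) and of $a$ (the local quotient $[Tf]/(f\circ\phi)$ works once $\phi$ is known continuous). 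All the individual steps check out: the kernel of a lattice homomorphism is an ideal, positivity gives automatic continuity so the ideal is closed, the quotient $C_0(X)/I_U\cong C_0(X\setminus U)$ identifies codimension-one closed ideals with singletons, and the two-sided bounds on $a$ follow from Urysohn functions applied to $T$ and $T^{-1}$. The only cosmetic quibble is that you assert the kernel is closed before noting the positivity that justifies automatic continuity. What your approach buys is independence from the cited reference; what the paper's buys is brevity, since the continuity of $a$ and $\phi$ comes for free from the quoted theorem rather than requiring the net arguments you supply.
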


\begin{proof}
By \cite[Theorem 3.2.10]{M-N}, for any lattice homomorphism $T:C_0(X) \rightarrow C_0(Y)$ there exist a continuous map $\phi:Y \supset Y' \rightarrow X$ and a function $a : Y\rightarrow [0,\infty)$, continuous on $Y'$ and vanishing outside of it, so that $[Tf] (y) = a(y) \cdot f( \phi(y) )$ holds for any $f\in C_0(X)$ and $y \in Y$.
As $T$ is bijective, $Y'$ coincides with $Y$, and $\phi : Y \to X$ is bijective. Further, $T^* \delta_y = a(y) \delta_{\phi(y)}$ for any $y \in Y$. As $T^*$ is bounded above and below, so must be $a$.

In a similar fashion, we can write $[T^{-1} g](x) = b(x) g(\psi(x))$, where $b : X \to (0,\infty)$ is continuous with $0 < \inf b \leq \sup b < \infty$, and $\psi : X \to Y$ is a continuous surjection.
It is easy to observe that $T^* \delta_t = a(t) \delta_{\phi(t)}$ for any $t \in Y$, and likewise, $T^{-1*} \delta_s = b(s) \delta_{\psi(s)}$ for any $s \in X$.
The equality $T^{-1*} T^* = I$ implies $\psi = \phi^{-1}$. Consequently, $\phi^{-1}$ is continuous.

Finally, the uniqueness of the representation of $T$ follows from the fact (noted above) that $T^* \delta_y = a(y) \delta_{\phi(y)}$ holds for any $y \in Y$.
\end{proof}

Now suppose $G$ is a group of lattice isometries on $C_0(X)$.
For notational convenience, we will speak of $G$ as acting on $X$, and write $g(t) = \phi_g(t)$ (with $\phi_g$ as in Proposition \ref{p:isomorphism-characterization}).
We shall say that $G$ is \emph{locally equicontinuous} on $X$ if $\{\phi_g\}_{g \in G}$ is equicontinuous on compact subsets of $X$.
Our main result is the following theorem:

\begin{theorem}\label{t:main-theorem}
Suppose $X$ is a locally compact Polish space. Let $G$ be a subgroup of $\isom(C_0(X), \| \cdot \|)$, relatively closed in the strong operator topology. Suppose also that every orbit of $G$ on $X$ is nowhere dense and that $\{\phi_g\}_{g \in G}$ is locally equicontinuous on $X$. Then for all $C > 1$, there exists a renorming $\tri{ \cdot }$ on $C_0(X)$ such that $G = \isom(C_0(X), \tri{ \cdot })$ and $\| \cdot \| \leq \tri{ \cdot } \leq C \| \cdot \|$.
\end{theorem}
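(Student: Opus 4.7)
My plan is a two-stage ``asymmetrize then $G$-symmetrize'' renorming. First, using the Polish structure of $X$, fix a countable dense sequence $(x_n)\subset X$ and, for each $n$, a compactly supported bump $\eta_n\in C_0(X)_+$ peaking at $x_n$, together with strictly decreasing positive weights $(c_n)$ whose tail $\sum c_n \|\eta_n\|_\infty^2$ is small. Set
\[
\|f\|_0 \;:=\; \Bigl( \|f\|^2 + \sum_{n} c_n\, \|f \eta_n\|^2 \Bigr)^{1/2},
\]
which is an $\ell^2$-combination of lattice seminorms and hence a lattice norm equivalent to $\|\cdot\|$. Then symmetrize:
\[
\tri{f} \;:=\; \sup_{g\in G}\, \|g\cdot f\|_0, \qquad g\cdot f := a_g\,(f\circ \phi_g),
\]
where $(a_g,\phi_g)$ is the data of $g$ from Proposition~\ref{p:isomorphism-characterization}. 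As a supremum of lattice norms, $\tri{\cdot}$ is a lattice norm; it is $G$-invariant by construction, so $G \subseteq \isom(C_0(X),\tri{\cdot})$. Because the $g\in G$ act by $\|\cdot\|$-isometries, we have $\|f\| \leq \|g\cdot f\|_0\leq C\|g\cdot f\|=C\|f\|$ for each $g$, so tuning $(c_n)$ yields $\|\cdot\|\leq \tri{\cdot}\leq C\|\cdot\|$.

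The heart of the proof is showing $\isom(C_0(X),\tri{\cdot})\subseteq G$. Let $T$ be a lattice isometry of $(C_0(X),\tri{\cdot})$, with associated data $(\phi,a)$. For every $f$, the identity $\tri{Tf}=\tri{f}$ says that the cosets $GT$ and $G$ realize the same $\|\cdot\|_0$-supremum on $f$. Testing this on sharply peaked bumps localized near each landmark $x_n$ should recover, through the distinctness of the weights $c_n$, the constraint that $\phi$ carries the $G$-orbit of $x_n$ into itself with a multiplier $a$ matching one of the $a_g$. In effect, $\phi$ acts on $X$ by permuting $G$-orbits in a weight-preserving way, and distinctness of the $c_n$'s forces each orbit to be fixed individually. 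This reduces the problem to showing that a homeomorphism $\phi$ of $X$ that preserves every $G$-orbit of a landmark $x_n$, and that is weight-compatible, must coincide with some $\phi_{g_0}$.

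To complete this reduction I would deploy the two standing hypotheses. Nowhere density of orbits allows the inductive choice of the landmarks $(x_n)$ (and the supports of $\eta_n$) so that each $x_n$ lies off $\bigcup_{m<n} G\cdot x_m$; this prevents accidental identifications between distinct $\eta_m$'s under a would-be non-$G$ isometry and makes the ``matching'' argument of the previous paragraph collision-free. Local equicontinuity of $\{\phi_g\}_{g\in G}$, combined with $G$ being SOT-closed, ensures that if at each landmark $x_n$ we find $g_n\in G$ witnessing the action of $(\phi,a)$, then a subsequence of $(g_n)$ stabilizes on compacta to a genuine element $g_0\in G$; density of $(x_n)$ and continuity of $\phi$ then upgrade the pointwise matching to $\phi\equiv\phi_{g_0}$, after which the identity $T^*\delta_y=a(y)\delta_{\phi(y)}$ forces $a\equiv a_{g_0}$, whence $T=g_0\in G$. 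The main technical obstacle I anticipate is the combinatorial bookkeeping in the inductive construction of $(x_n,\eta_n,c_n)$: the weights must be rigid enough that no rearrangement can balance them, and simultaneously the bumps must decouple under the $G$-supremum yet be sensitive enough to witness every candidate asymmetry of any hypothetical $\phi$ outside $\{\phi_g: g\in G\}$.
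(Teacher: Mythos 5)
Your overall architecture (a small ``asymmetric'' perturbation of the sup norm built from landmarks, followed by $G$-symmetrization) is in the same spirit as the paper, and the easy parts are fine: $\tri{\cdot}$ is an equivalent $G$-invariant lattice norm with $\|\cdot\|\le\tri{\cdot}\le C\|\cdot\|$ for suitably small $(c_n)$, so $G\subseteq\isom(C_0(X),\tri{\cdot})$. (One small point: since $G$ consists of isometries of the original norm, $a_g\equiv 1$ for all $g\in G$, so the ``multiplier matching one of the $a_g$'' language is moot.) The problem is that the entire content of the theorem lies in the reverse inclusion, and there your argument has a genuine gap, not just missing detail. You reduce to the claim that a homeomorphism $\phi$ preserving each $G$-orbit closure $[x_n]$ individually (in a weight-compatible way) must equal some $\phi_{g_0}$. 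That reduction is false: take $X=X_1\sqcup X_2$ and $G=\{e,g\}$ with $g$ acting as an involution on each piece; then $\phi$ equal to $g$ on $X_1$ and the identity on $X_2$ preserves every orbit but lies outside $G$. What is actually needed is \emph{simultaneous} orbit preservation: for each $n$ a single $g_n\in G$ with $g_n(x_k)$ close to $\phi(x_k)$ for \emph{all} $k\le n$ at once. Your construction has no mechanism to force this -- the seminorms $\|f\eta_n\|$ decouple the landmarks, and the only coupling comes implicitly from the outer $\sup_{g\in G}$, which you never analyze. Moreover, since $G$ is only relatively SOT-closed (not compact), a sequence of separate witnesses $g_n$, one per landmark, need not have any convergent subsequence, so the final limiting step also fails as stated.

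This is exactly the difficulty the paper's combinatorial machinery is built to overcome: the seminorms $\rho_t$ are indexed by \emph{tuples} $t=(t_i^{\gamma_0},\ldots,t_{i+n}^{\gamma_n})$ and contain cross-terms $|x(t_{i+k}^{\gamma_k})|/b(t|^k)$, where the function $b$ injectively encodes the $G$-orbit of the whole tuple (Lemma \ref{l:bmap}). The dual-norm computations (Lemmas \ref{l:unit} through \ref{l:comparison}) then show that the values $\tri{\sum_k\beta_k\delta_{t_k}}^*$ on suitable coefficient boxes determine $[t]$ for tuples of every length, which yields $\phi(t_1^0,\ldots,t_n^0)\in[(t_1^0,\ldots,t_n^0)]$ (Lemma \ref{l:preserve_orbits}) and hence a single $g_n$ approximating $\phi$ at the first $n$ landmarks simultaneously; only then do density, local equicontinuity, and relative SOT-closedness give $\phi\in G$. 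To salvage your approach you would need to prove an analogous tuple-wise rigidity statement for $\sup_{g\in G}\bigl(\|f\|^2+\sum_n c_n\|f\eta_n\|^2\bigr)^{1/2}$, which amounts to redoing the hard dual-norm analysis in a setting (an $\ell^2$ sum of sup-seminorms under a supremum over $G$) where it is, if anything, less tractable.
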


Here and below, the \emph{strong operator topology} (\emph{SOT} for short) on $B(E,F)$ is generated by sets $\{T \in B(E,F) : \|T x_0 - y_0\| < \vr$\}, for some $\vr > 0$, $x_0 \in E$, and $y_0 \in F$. This topology corresponds to point-norm convergence of nets of operators.

In general, $\isom(E)$ need not be SOT-closed. The condition of $G$ being \emph{relatively SOT-closed} means that $G$ is closed in the relative strong operator topology of $\isom(C_0(X))$, that is, $G = \overline{G}^{SOT} \cap \isom(C_0(X))$.

If the Banach spaces $E$ and $F$ are separable, then every bounded subset of $B(E,F)$ is SOT-separable. Now suppose $X$ is as in Theorem \ref{t:main-theorem}. For metrizable spaces, separability is equivalent to being second countable (see e.g.~\cite[Corollary 4.1.16]{Engelking}). 
By \cite[Theorem 5.3]{Kechris}, $X$ is a countable union of compact sets. Therefore, $C_0(X)$ is separable, hence all bounded subsets of $B(C_0(X))$ are SOT-separable.

Most of this paper is devoted to proving Theorem \ref{t:main-theorem}.
Throughout, we shall assume that $C \in (1,1.1)$, unless specified otherwise.


In Section \ref{s:generalities} we gather, for later use, some general facts about lattice isomorphisms on $C_0(X)$, and the corresponding maps on $X$. 
In Section \ref{s:combinatorial}, we present a combinatorial construction of the norm $\tri{ \cdot }$, whose existence is claimed in Theorem \ref{t:main-theorem}. The actual properties of the norm are established in Section \ref{s:new norm}.
We present some consequences of Theorem \ref{t:main-theorem} in Section \ref{s:more-results}.
Perhaps the most important result states that any locally compact Polish group is displayable on a $C_0(X)$ space (Corollary \ref{c:everything displayed}).
Finally, in Section \ref{s:isomorphisms}, we consider displayability of bounded groups of lattice isomorphisms (not necessarily isometries).

Throughout this paper, we use standard Banach lattice facts and notation; these can be found in e.g.~\cite{M-N}. 
Our field of scalars is $\R$.

\section{Topological basics}\label{s:generalities}


In this section we analyze families of (locally) equicontinuous mappings on a topological space $X$; the results will be used throughout this paper.

From now on, for any lattice isomorphism $g$ on $C_0(X)$, we let $a_g:X\rightarrow \R$ be the weight function and $\phi_g:X\rightarrow X$ be the underlying homeomorphism as described in Proposition \ref{p:isomorphism-characterization}.
Observe, for later use, that, if $g$ and $h$ are lattice isomorphisms on $C_0(X)$, then $\phi_{gh} = \phi_h \circ \phi_g$, hence, in particular, $\phi_{g^{-1}} = \phi_g^{-1}$. 

\begin{proposition}\label{p:characterize-SOT-convergence}
	Let $(X,d)$ be a locally compact metric space. Suppose $g_n : C_0(X) \to C_0(X)$ ($n \in \N$) are lattice isomorphisms, with $\sup_n \|g_n\| < \infty$. 
	Then $g_n \rightarrow g$ in the SOT if and only if the following conditions hold:
	\begin{enumerate}
	 \item 
	 $\phi_{g_n}|_K \rightarrow \phi_g|_K$ uniformly, for any compact $K \subset X$.
	 \item 
	 $a_{g_n}|_K \rightarrow a_g|_K$ uniformly, for any compact $K \subset X$.
	 \item
	 For any compact $K \subset X$, and any $\varepsilon > 0$, there exists $N \in \N$ so that $d \big(\phi_{g_n}^{-1}(t),\phi_g^{-1}(K)\big) \leq \varepsilon$ for any $t \in K$ and $n \geq N$.
	\end{enumerate}
Moreover, if the family $(\phi^{-1}_{g_n})$ is locally equicontinuous for any compact $K$, then $(1)$ implies $(3)$. Consequently, if $(\phi_{g_n}^{-1})$ is locally equicontinuous, then $g_n \to g$ in SOT if and only if $\phi_{g_n} \to \phi_g$ uniformly on compact sets.
\end{proposition}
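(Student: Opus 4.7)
The plan treats the two directions of the biconditional and the moreover statement in turn.

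For the reverse direction $(1)\land(2)\land(3)\Rightarrow\mathrm{SOT}$, fix $f\in C_0(X)$ and bound $\|g_n f-gf\|_\infty$ via the pointwise identity
\[
(g_nf-gf)(t)=(a_{g_n}(t)-a_g(t))\,f(\phi_g(t))+a_{g_n}(t)\,[f(\phi_{g_n}(t))-f(\phi_g(t))].
\]
For $\delta>0$, let $K_\delta:=\{|f|\ge\delta\}$ (compact) and choose a compact $L$ containing a closed neighborhood of $\phi_g^{-1}(K_\delta)$. Outside $L$: the first summand is $\le\delta\|a_g\|_\infty$, and by (3) applied to $K_\delta$, for large $n$ one has $\phi_{g_n}^{-1}(K_\delta)\subseteq L$, so $t\notin L$ forces $|f(\phi_{g_n}(t))|<\delta$, bounding the second summand by $\delta\sup_n\|a_{g_n}\|_\infty$. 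On $L$: by (1), $\phi_{g_n}(L)$ lies in a common compact $M$ for large $n$, so uniform continuity of $f$ on $M$ combined with (1) handles the second summand while (2) handles the first.

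For the forward direction I derive (1), (2), (3) in order. For (1), argue by contradiction: if it fails, extract $t_k\to t_\infty$ in $K$ with $d(\phi_{g_{n_k}}(t_k),\phi_g(t_\infty))\ge\vr/2$. Whether $\phi_{g_{n_k}}(t_k)$ has a subsequential limit $s\ne\phi_g(t_\infty)$ or escapes every compact, pick $f\in C_c(X)$ with $f(\phi_g(t_\infty))=1$ and $f$ vanishing in a neighborhood of $s$ or outside a large compact set; then $(g_{n_k}f)(t_k)\to0$ while SOT forces $(g_{n_k}f)(t_k)\to(gf)(t_\infty)=a_g(t_\infty)>0$, a contradiction. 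For (2), use (1) to place $\phi_{g_n}(K)$ inside a neighborhood $U$ of $\phi_g(K)$ eventually, pick $f\in C_c(X)$ with $f\equiv1$ on $U$, and read off $a_{g_n}|_K=(g_nf)|_K\to(gf)|_K=a_g|_K$ uniformly. For (3), produce $t_k\in K$ with $s_k:=\phi_{g_{n_k}}^{-1}(t_k)$ at distance $>\vr$ from $\phi_g^{-1}(K)$; if $s_k$ has a limit $s_\infty$, apply (1) on the compact $\{s_k\}\cup\{s_\infty\}$ to get $\phi_g(s_\infty)=\lim_k t_k\in K$, contradicting $d(s_\infty,\phi_g^{-1}(K))\ge\vr$. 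The subcase where $s_k$ escapes every compact is the main obstacle: taking $f\equiv1$ on $K$ (compactly supported) gives $(g_{n_k}f)(s_k)=a_{g_{n_k}}(s_k)$, and uniform convergence $g_{n_k}f\to gf\in C_0(X)$ forces $a_{g_{n_k}}(s_k)\to 0$; converting this into a contradiction requires either a uniform lower bound on the weights $a_{g_n}$ (immediate in the lattice-isometry setting where $a_{g_n}\equiv 1$, or whenever $\sup_n\|g_n^{-1}\|<\infty$) or a secondary argument applying $g_{n_k}^{-1}$ to a bump concentrated at $t_k$ to derive a conflicting norm estimate.

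For the moreover, assume (1) and local equicontinuity of $\{\phi_{g_n}^{-1}\}$. I will show the stronger statement $\phi_{g_n}^{-1}|_K\to\phi_g^{-1}|_K$ uniformly on each compact $K$, which manifestly implies (3). Pointwise: for $t\in K$, let $u=\phi_g^{-1}(t)$ and $s_n=\phi_{g_n}^{-1}(t)$. By (1), $\phi_{g_n}(u)\to t$, and local equicontinuity of $\phi_{g_n}^{-1}$ on a compact neighborhood of $t$ then gives $d(s_n,u)=d(\phi_{g_n}^{-1}(t),\phi_{g_n}^{-1}(\phi_{g_n}(u)))\to 0$. Equicontinuity on $K$ combined with this pointwise convergence upgrades to uniform convergence. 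For the concluding sentence, SOT always implies (1); conversely, under local equicontinuity of $\{\phi_{g_n}^{-1}\}$ in the isometry setting where $a_{g_n}\equiv a_g\equiv 1$, (1) yields (3) by the moreover while (2) is trivial, so the reverse implication delivers SOT.
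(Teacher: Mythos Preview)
Your overall strategy mirrors the paper's closely: the pointwise splitting for the reverse direction, the bump-function contradiction arguments for (1), (2), (3) in the forward direction, and the use of equicontinuity for the moreover clause. The minor structural differences (you split the contradiction for (3) into a ``convergent'' and an ``escaping'' subcase, while the paper handles both at once with a single bump function supported near $K$) are cosmetic.

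You are right to flag the gap in the forward direction for (3), namely the subcase where $s_k=\phi_{g_{n_k}}^{-1}(t_k)$ leaves every compact set. Your observation that SOT convergence then forces $a_{g_{n_k}}(s_k)\to 0$ is correct, and you correctly note that without a uniform lower bound on the weights (equivalently $\sup_n\|g_n^{-1}\|<\infty$) this does not obviously yield a contradiction. The paper's proof does \emph{not} resolve this either: it arrives at $\|g_{n_i}f-gf\|\geq a_{g_{n_i}}(t_i)$ and then asserts this is ``$\geq \inf a>0$'' without specifying which weight $a$ is meant or why the infimum over $i$ should be positive. So the gap you identify is present in the paper's argument as well. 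In all subsequent applications the $g_n$ are lattice isometries (so $a_{g_n}\equiv 1$), and the issue disappears; your parenthetical remark to this effect is precisely the fix the paper is tacitly relying on.

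Your reading of the final ``Consequently'' clause is likewise correct: passing from (1) alone back to SOT still needs (2), which is not supplied by the equicontinuity hypothesis, so the equivalence as written only holds in the isometry setting. The paper's later use of this implication (in the lemma on pointwise versus SOT convergence) is indeed restricted to lattice isometries, where $a_{g_n}\equiv 1$ makes (2) automatic.
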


For $K \subset X$ and $\delta < 0$, we use the notation $K_\delta$ for $\{t \in X : d(t,K) \leq \delta\}$. The following result may be known.

\begin{lemma}\label{l:compact}
 If $(X,d)$ is locally compact, and $K \subset X$ is compact, then $K_\delta$ is compact for $\delta$ small enough.
\end{lemma}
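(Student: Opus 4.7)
The plan is to exploit local compactness to cover $K$ by finitely many balls with compact closures, then use the triangle inequality to pin $K_\delta$ inside a finite union of those closed balls for small enough $\delta$.

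First I would invoke local compactness pointwise: for each $x \in K$ choose an open neighborhood $U_x \ni x$ with $\overline{U_x}$ compact, and pick $\delta_x > 0$ small enough that $B(x, 2\delta_x) \subset U_x$. Then $\overline{B(x, 2\delta_x)}$ is a closed subset of the compact set $\overline{U_x}$, and is therefore itself compact. The open balls $\{B(x, \delta_x)\}_{x \in K}$ cover the compact set $K$, so I can extract a finite subcover indexed by $x_1, \dots, x_n$, and set
\[
\delta := \min_{1 \leq i \leq n} \delta_{x_i} > 0.
\]

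Next I would use the triangle inequality to show $K_\delta \subset \bigcup_{i=1}^n \overline{B(x_i, 2\delta_{x_i})}$. Indeed, if $t \in K_\delta$, pick $y \in K$ with $d(t,y) \leq \delta$, and then $i$ with $y \in B(x_i, \delta_{x_i})$; then $d(t, x_i) \leq d(t,y) + d(y, x_i) < \delta + \delta_{x_i} \leq 2\delta_{x_i}$. Since the right-hand side is a finite union of compact sets, it is compact. Finally, $K_\delta$ is the preimage of $[0,\delta]$ under the continuous map $t \mapsto d(t,K)$, so it is closed, and hence a closed subset of a compact set, therefore compact.

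I do not anticipate any real obstacle here; the only subtlety is that in a metric space local compactness does not a priori give compact closed balls around every point, so one must first pass through an auxiliary compact neighborhood $U_x$ before declaring the small closed ball around $x$ to be compact. Once that point is handled, the covering argument is routine.
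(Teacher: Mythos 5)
Your proof is correct and follows essentially the same route as the paper: cover $K$ by finitely many balls with compact closures, extract a finite subcover, and show that $K_\delta$ lands inside the union of their closures for $\delta$ small enough. The only differences are cosmetic --- you obtain the containment directly (with an explicit $\delta$) via the triangle inequality after halving the radii, whereas the paper argues by contradiction with sequences $u_n \in K$, $v_n \notin S$; you also spell out the closedness of $K_\delta$, which the paper leaves implicit.
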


\begin{proof}
For $t \in X$ and $c > 0$, we let $U(t,c) = \{s \in X : d(t,s) < c\}$.

For each $t \in K$, find $c_t > 0$ so that $\overline{U(t,c_t)}$ is compact. The sets $U(t,c_t)$ ($t \in K$) form an open cover of $K$, which has a finite subcover $(U(t_i, c_{t_i}))_{i=1}^N$. Then $S = \cup_{i=1}^N \overline{U(t_i, c_{t_i})}$ is compact. We claim that there exists $\delta > 0$ so that $K_\delta \subset S$. Indeed, otherwise for any $n \in \N$ there exists $u_n \in K$ and $v_n \in X \backslash S$ so that $d(u_n,v_n) < 1/n$. By passing to a subsequence, we can assume that $u_n \to u \in K$; consequently, $v_n \to u$ as well. 

Find $i$ so that $u \in U(t_i, c_{t_i})$. As the latter set is open, we conclude that $v_n \in U(t_i, c_{t_i})$, for $n$ large enough. However, $v_n \notin S \supset U(t_i, c_{t_i})$, providing us with the desired contradiction.
\end{proof}

\begin{proof}[Proof of Proposition \ref{p:characterize-SOT-convergence}]
Suppose $g_n \to g$ in SOT.
First show that (1) holds. 
Suppose, for the sake of contradiction, that for some compact $K$, there is $\varepsilon > 0$ and a sequence $(k_n) \subseteq K$ such that $d(\phi_{g_n} (k_n), \phi_g(k_n)) > 2\varepsilon$.    By compactness we can assume that $k_n \rightarrow k \in K$.
Now $$d(\phi_{g_n} (k_n), \phi_g(k_n)) \leq d(\phi_{g_n}(k_n), \phi_g(k)) + d(\phi_g(k), \phi_g(k_n)),$$  so for large enough $n$, we have  $d(\phi_{g_n}(k_n), \phi_g(k)) > \varepsilon$.
Pick $f \in C_0(X)$ such that $\|gf\| = 1 = [gf](k) = f(\phi_g(k))$, 
and $f(s) = 0$ when $d(s, \phi_g(k)) \geq \varepsilon$.
Observe that for large enough $n$, $\|g_nf - gf \| < 1/2$, and $k_n$ is such that $[gf](k_n) > \frac{1}{2}$.  In particular, $|[g_nf](k_n) - [gf](k_n) | < \frac{1}{2}$, so $[g_nf](k_n) = a_{g_n} (k_n) f(\phi_{g_n}(k_n)) > 0$.  Since, therefore, $f(\phi_{g_n}(k_n)) > 0$, we must have $d((\phi_{g_n}(k_n), \phi_g(k))) < \varepsilon$, a contradiction.  

To establish (2), 
pick a compact $K \subset X$ and $\delta > 0$. We shall show that, for $n$ large enough, the inequality $|a_{g_n}(t) - a_g(t) | \leq \delta$ holds for any $t \in K$.

Begin by finding $\varepsilon > 0$ so that  $\phi_g (K)_{\varepsilon}$ is compact.
Pick $f \in C_0(X)$ so that $0 \leq f \leq 1$, and $f = 1$ on $\phi_g (K)_{\varepsilon}$. Select $N \in \N$ so that $\|g_n f - gf \| < \delta$ for $n \geq N$.

Above, we have established that $\phi_{g_n} \to \phi_g$ uniformly on compact sets; therefore, there exists $M \in \N$ so that $\phi_{g_n}(K) \subset \phi_g (K)_{\varepsilon}$ for $n \geq M$.
For $n \geq \max\{M,N\}$, and for $t \in K$, we have
$$ |a_{g_n}(t) - a_g(t) | = |a_{g_n}(t) f(\phi_{g_n}(t) ) - a_g(t)f(\phi_g(t)) | \leq \|g_n f - gf \| < \delta . $$ 

To handle (3), suppose, for the sake of contradiction, that for some compact $K$ and $\varepsilon > 0$, there exist $n_1 < n_2 < \ldots$ and $(t_i) \subset X \backslash \phi^{-1}(K)_\varepsilon$ so that $\phi_{n_i}(t_i) \in K$.
Passing to a subsequence if necessary, we can and do assume that there exists $\delta \in (0,\varepsilon)$ so that $d(\phi_g(t_i),K) > \delta$.
Indeed, otherwise $\liminf_i d(\phi_g(t_i),K) = 0$, hence, passing to a subsequence and invoking the compactness of $K$, we can assume that $\phi_g(t_i) \to s \in K$. Consequently, $t_i \to \phi_g^{-1}(s) \in \phi_g^{-1}(K)$, which is impossible.

Find $f \in C_0(K)$ so that $0 \leq f \leq 1$, $f|_K = 1$, and $f|_{X \backslash K_\delta} = 0$.
Then $[gf](t_i) = a(t_i) f ( \phi_g(t_i))  = 0$, while $[g_{n_i} f](t_i) = a(t_i) f ( \phi_{g_{n_i}}(t_i))$, and so,
$$\big\|gf - g_{n_i} f\big\| \geq \big| [gf](t_i) - [g_{n_i} f](t_i) \big| \geq \inf a > 0 . $$
This precludes $g$ from being the SOT limit of $(g_n)$.

Suppose (1), (2), and (3) hold, and
show that $g_n \to g$ in SOT. 
By the boundedness of $(g_n)$ it suffices to show that, for any norm one $f \in C_0(X)$, supported on a compact set $K$, we have $\|g_n f - gf\| \to 0$.

By Lemma \ref{l:compact}, we can fix $\varepsilon > 0$ for which $K_\varepsilon$ and $\phi^{-1}_g(K)_{\varepsilon}$ are compact.
Further, find $\delta > 0$ so that $|f(t) - f(s)| < \varepsilon$ whenever $d(t,s) < \delta$ (this is possible, by the uniform continuity of $f$).
Find $N \in \N$ so that, for any $n \geq N$, the following conditions hold: (i) $d(\phi_{g_n}(t), \phi_g(t)) \leq \delta$ for any $t \in \phi^{-1}_g(K)_{\varepsilon}$, 
(ii) $|a_{g_n}(t) - a(t)| < \varepsilon$ for any $t \in K_\varepsilon$, (iii) $\phi_{g_n}^{-1}(K) \subset \phi_g^{-1}(K)_{\varepsilon}$.
For $t \notin \phi_g^{-1}(K)_\varepsilon$, neither $\phi_g(t)$ nor $\phi_{g_n}(t)$ belongs to $K$, and therefore, $[g_n f](t) = 0 = [gf](t)$.
On the other hand, for $t \in \phi_g^{-1}(K)_\varepsilon$, the triangle inequality yields
\begin{align*}
&
\big| [g_n f](t) - [gf](t) \big| =
\big| a_{g_n}(t) f(\phi_{g_n}(t)) - a_g(t) f(\phi_g(t)) \big| 
\\
&
\leq
\big| a_{g_n}(t) \big| \cdot \big| f(\phi_{g_n}(t)) - f(\phi_g(t)) \big| +
\big| a_{g_n}(t) - a_g(t) \big| \cdot \big| f(\phi_g(t)) \big| 
\\
&
\leq
C \big| f(\phi_{g_n}(t)) - f(\phi_g(t)) \big| +
\big| a_{g_n}(t) - a_g(t) \big| \cdot \big| f(\phi_g(t)) \big| ,
\end{align*}
where $C = \sup_n \|g_n\| = \sup_n \|a_{g_n}\|_\infty$. 
For $n \geq N$, we have $\big| a_{g_n}(t) - a_g(t) \big| \leq \varepsilon$, and $d(\phi_{g_n}(t)), \phi_g(t)) < \delta$, hence $\big| f(\phi_{g_n}(t)) - f(\phi_g(t)) \big| \leq \varepsilon$.
Therefore, for any $n \geq N$ we have
$$
\|g_nf - gf\| = \sup_t \big| [g_n f](t) - [gf](t) \big| \leq (C+1) \varepsilon .
$$
Since $\varepsilon$ can be taken arbitrarily small, we conclude that $\lim_n \|g_nf - gf\| = 0$.

It remains to establish the ``moreover'' statement. Suppose, for the sake of contradiction, that the family $(\phi^{-1}_{g_n}|_S)$ is equicontinuous for any compact $S$, and $(1)$ holds while $(3)$ doesn't. 
Find a compact $K \subset X$, $\varepsilon > 0$, $n_1 < n_2 < \ldots$, and $(t_i) \subset X \backslash \phi^{-1}(K)_\varepsilon$ so that $\phi_{g_{n_i}}(t_i) \in K$. Passing to a subsequence, assume that $\phi_{g_{n_i}}(t_i) \to s \in K$.
Find $\sigma > 0$ so that $d(\phi_{g_n}^{-1}(t), \phi_{g_n}^{-1}(s)) < \varepsilon$ whenever $n \in \N$, and $t,s \in K$ satisfy $d(t,s) < \sigma$.
Further, find $\rho > 0$ so that $K_\rho$ is compact.

As $\phi_g^{-1}(s) \in \phi_g^{-1}(K)$, $d \big(t_i, \phi_g^{-1}(s)\big) > \varepsilon$. Moreover, $\lim_i \phi_{g_{n_i}} = \phi_g$ uniformly on compact sets, hence
$$
\lim_i \phi_{g_{n_i}} \circ  \phi_g^{-1}(s) = \phi_g \circ  \phi_g^{-1}(s) = s .
$$
Thus, for $i$ large enough, $\phi_{g_{n_i}} \circ  \phi_g^{-1}(s) \in K_\rho$.
As the family $\phi^{-1}_{g_n}$ is equicontinuous on $K_\rho$, we have
$$
\lim_i d \big( \phi_{g_{n_i}}^{-1} \circ  \phi_{g_{n_i}} \circ  \phi_g^{-1}(s) , \phi^{-1}_{g_{n_i}}(s) \big) = \lim_i d \big( \phi_g^{-1}(s) , \phi^{-1}_{g_{n_i}}(s) \big) = 0
$$
As $d \big( t_i, \phi_g^{-1}(s) \big) \geq d(t_i,\phi^{-1}(K)) > \varepsilon$, we conclude that $d \big(t_i, \phi_{g_{n_i}}^{-1}(s)\big) > \varepsilon$ for $i$ large enough.
For such $i$, we must have $d \big(\phi_{g_{n_i}}(t_i), s \big) \geq \sigma$, which contradicts $\phi_{g_{n_i}}(t_i) \to s$.
\end{proof}

\begin{remark}\label{r:equicontinuity required}
 In general, items (1) and (2) of Proposition \ref{p:characterize-SOT-convergence} alone do not imply that $g_n \to g$ in SOT. Below we construct an example of a locally compact metrizable space $X$, and a sequence of continuous maps $\phi_1, \phi_2, \ldots : X \to X$, with continuous inverses, so that $\phi_n \to id$ uniformly on compact sets, but there exists $x \in C_0(X)$ so that $x \circ \phi_n \not\to x$.
 
 Let $X$ consist of pairs $(0,x)$ ($x \in \N \cup \{\infty\}$) and $(i,j)$, with $i,j \in \N$.
 Define the metric $d$ by setting $d \big( (0,x), (0,y) \big) = 2^{-\min\{x,y\}}$, and $d \big( (a,b), (c,d) \big) = 1$ if $\max\{a,c\} \geq 1$.
 Then $K \subset X$ is compact iff it is a union of a finite set and an interval $\{(0,x) : n \leq x \leq \infty\}$ ($n \in \N$).
 
 For $n \in \N$ define $\phi_n : X \to X$ as follows:
 \begin{itemize}
  \item $\phi_n(0,i) = (0,i+1)$ for $n \leq i < \infty$, $\phi_n(0,\infty) = (0,\infty)$.
  \item $\phi_n(n,n) = (0,n)$.
  \item $\phi_n(n,i) = (n,i-1)$ for $n < i < \infty$.
  \item $\phi_n(a,b) = (a,b)$ if either $a \notin \{0,n\}$, or $a \in \{0,n\}$ and $b < n$. 
 \end{itemize}
 Clearly, for each $n$, $\phi_n$ is continuous, bijective, and has a continuous inverse. Further, $\phi_n \to id$ uniformly on compact sets. However, the family $(\phi_n^{-1})$ is not equicontinuous on $\{(0,i) : i \geq m\}$, for any $m$.
 
 To show that the lattice isomorphisms $g_n : x \mapsto x \circ \phi_n$ do not converge to the identity in SOT, define $x \in C_0(X)$ by setting $x(a,b) = 1$ if $a = 0$, $x(a,b) = 0$ otherwise. Then $x \circ \phi_n(n,n) = x(0,n) = 1$, and so, $x \circ \phi_n \not\to x$ in norm.
\end{remark}

From this point on, and up to the end of Section \ref{s:more-results}, $G$ denotes a group of lattice isometries of $C_0(X)$.
The above discussion shows that for any $g \in G$ there exists a (necessarily unique) homeomorphism $\phi_g : X \to X$ so that $gx = x \circ \phi_g$ (in other words, $a_g = 1$).
For ease of notation, we sometimes use $g(t)$ for $t\in X$ to refer to $\phi_g(t)$.

Finally, the following description of SOT convergence will be used in the sequel.

\begin{lemma}\label{l:equicontinuous+SOT=isometry}
	Let $X$ be a locally compact Polish space, and suppose  $\{\phi_g\}_{g \in G}$ is locally equicontinuous on $X$.  Then $g_n \rightarrow g$ in the SOT on $C_0(X)$ iff $\phi_{g_n} \rightarrow \phi_g$ pointwise on $X$.  
\end{lemma}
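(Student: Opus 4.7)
The plan is to reduce the lemma to Proposition \ref{p:characterize-SOT-convergence} by exploiting that the elements of $G$ are lattice \emph{isometries}, so that $a_g \equiv 1$ for every $g \in G$, and that local equicontinuity of $\{\phi_g\}_{g \in G}$ gives local equicontinuity of both $(\phi_{g_n})$ and $(\phi_{g_n}^{-1})$ (since $G$ is a group, $\{\phi_g\}_{g\in G} = \{\phi_g^{-1}\}_{g\in G}$).

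For the forward direction, if $g_n \to g$ in SOT, then by Proposition \ref{p:characterize-SOT-convergence}(1) we have $\phi_{g_n} \to \phi_g$ uniformly on each compact subset of $X$, which in particular yields pointwise convergence.

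For the reverse direction, assume $\phi_{g_n} \to \phi_g$ pointwise. I first upgrade this to uniform convergence on compact sets, which is the only real work in the proof. Fix a compact set $K \subset X$ and $\varepsilon > 0$. By local equicontinuity, choose $\delta > 0$ such that for every $g \in G$ and every $s,t \in K$ with $d(s,t) < \delta$ we have $d(\phi_g(s), \phi_g(t)) < \varepsilon/3$; the same bound automatically holds for $\phi_g$ (and so for $\phi_{g_n}$ and the limit $\phi_g$). Cover $K$ by finitely many $\delta$-balls centered at $t_1, \ldots, t_m \in K$. By pointwise convergence, pick $N$ so large that $d(\phi_{g_n}(t_i), \phi_g(t_i)) < \varepsilon/3$ for all $1 \leq i \leq m$ and $n \geq N$. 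Then for any $t \in K$, choosing $t_i$ with $d(t,t_i) < \delta$, a triangle inequality gives
\[
d(\phi_{g_n}(t), \phi_g(t)) \leq d(\phi_{g_n}(t), \phi_{g_n}(t_i)) + d(\phi_{g_n}(t_i), \phi_g(t_i)) + d(\phi_g(t_i), \phi_g(t)) < \varepsilon
\]
for all $n \geq N$, establishing uniform convergence of $\phi_{g_n}$ to $\phi_g$ on $K$, i.e.\ condition (1) of Proposition \ref{p:characterize-SOT-convergence}.

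To finish, note that condition (2) of Proposition \ref{p:characterize-SOT-convergence} is automatic, since $a_{g_n} \equiv 1 \equiv a_g$. Moreover, since $(\phi_{g_n}^{-1})$ is locally equicontinuous, the ``moreover'' clause of Proposition \ref{p:characterize-SOT-convergence} tells us that (1) implies (3). Having (1), (2) and (3), the proposition gives $g_n \to g$ in SOT, as required. The main obstacle was the equicontinuity-plus-pointwise-convergence upgrade above; everything else is a direct invocation of Proposition \ref{p:characterize-SOT-convergence}.
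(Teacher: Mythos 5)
Your proof is correct and follows essentially the same route as the paper's: the forward direction is immediate from Proposition \ref{p:characterize-SOT-convergence}, and the reverse direction upgrades pointwise to locally uniform convergence via the same three-epsilon equicontinuity argument, then invokes the ``moreover'' clause (with $(\phi_{g_n}^{-1})$ locally equicontinuous because $G$ is a group) and the fact that $a_{g_n}\equiv 1$. Your explicit remark that the equicontinuity modulus passes to the pointwise limit $\phi_g$ is a small point the paper glosses over, but it is the same proof.
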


\begin{proof}
	By Proposition \ref{p:characterize-SOT-convergence}, convergence in SOT clearly implies pointwise convergence.
	
 	Suppose now that $g_n(t) \rightarrow g(t)$ for all $t\in X$. 
 	Since $G$ is closed under inverses and $\phi_g^{-1} = \phi_{g^{-1}}$ for all $g\in G$, $(\phi_{g_n}^{-1})_n$ is also locally equicontinuous, by Proposition \ref{p:characterize-SOT-convergence},  we need only to show that $\phi_{g_n} \to \phi_g$ uniformly on compact sets. Fix $\varepsilon > 0$ and a compact $K \subset X$.
 	We have to show that, for $n$ large enough,
 	$\sup_{t \in K} d \big( \phi_{g_n}(t), \phi_g(t) \big) < \varepsilon$.
 	Find $\delta > 0$ so that $d \big( \phi_{g_n}(t), \phi_{g_n}(s) \big) < \varepsilon/3$ whenever $n \in \N$, $t,s \in K$, and $d(t,s) < \delta$.
 	By the compactness of $K$, find $t_1, \ldots, t_m$ so that for any $s \in K$ there exists $i$ with $d(s,t_i) < \delta$.
 	Find $N \in \N$ so that
 	$$  \sup_{n \geq N} \max_{1 \leq i \leq m} d \big( \phi_{g_n}(t_i) , \phi_g(t_i) \big) < \frac{\varepsilon}3 .  $$
 	Now fix $n \geq N$ and $t \in K$. Find $i$ so that $d(t,t_i) < \delta$. Then
 	\begin{align*}
 	 &
 	d \big( \phi_{g_n}(t), \phi_g(t) \big) \leq
 	\\
 	&
 	d \big( \phi_{g_n}(t), \phi_{g_n}(t_i) \big) + d \big( \phi_{g_n}(t_i), \phi_g(t_i) \big) + d \big( \phi_g(t_i), \phi_g(t) \big) < 3 \cdot \frac\varepsilon3 = \varepsilon . \qedhere 
    \end{align*}

 	%
\end{proof}

 

	\section{Proof of Theorem \ref{t:main-theorem}: a combinatorial construction}\label{s:combinatorial}
	
In this section we lay the combinatorial groundwork required to prove Theorem \ref{t:main-theorem}. Based on this, in Section \ref{s:new norm} we define a new norm $\tri{ \cdot }$ on $C_0(X)$, and show it has the desired properties.
	Throughout we use the notations and assumptions of Theorem \ref{t:main-theorem}.

In a natural way, we view $G$ as acting on $X^n$, for any $n$: for $  t = (t_1, \ldots, t_n)$ (with $t_i \in X$ for $1 \leq i \leq n$), we define $g  t := (gt_1, \ldots, gt_n)$. Equipping $X^n$ with the max metric $d_n$ (which generates the product topology), we see that the action of $G$ on $X^n$ is locally equicontinuous.

For $  t \in X^n$, we shall denote by $[  t]$ the closure of the $G$-orbit of $  t$. For $  s,   t \in X^n$, we write $  s \sim   t$ if $[   s] = [  t]$. For future use, we need a criterion for this equivalence:

\begin{lemma}\label{l:equivalences}
    In the above notation, $  s \sim   t$ if and only if $  s \in [  t]$.
\end{lemma}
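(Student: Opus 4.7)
One direction is immediate: if $s \sim t$ then $[s]=[t]$ and $s \in [s]$, so $s \in [t]$. The substantive content is the converse. Suppose $s \in [t]$. Because $X$ is Polish, so is $X^n$, and the $G$-orbit closure $[t]$ is a closed (metrizable) set, so there is a sequence $(g_k) \subset G$ with $g_k t \to s$ in $X^n$. I will establish the two inclusions $[s] \subseteq [t]$ and $[t] \subseteq [s]$ separately, the first by continuity, the second by local equicontinuity.

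\textbf{Step 1 ($[s] \subseteq [t]$).} For any $g \in G$, $\phi_g$ is a homeomorphism of $X$, so the induced map on $X^n$ is continuous, and hence
\[
g s = g \bigl( \lim_k g_k t \bigr) = \lim_k (g g_k) t \in [t],
\]
since each $(g g_k) t$ lies in the orbit of $t$ and $[t]$ is closed. Thus the orbit $Gs \subset [t]$, and taking closures yields $[s] \subseteq [t]$.

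\textbf{Step 2 ($[t] \subseteq [s]$).} It suffices to show $t \in [s]$, after which Step 1 (with the roles of $s$ and $t$ swapped) gives the inclusion. To produce $t$ as a limit of $G$-translates of $s$, I will apply $g_k^{-1}$ to the relation $g_k t \to s$. The point is that $g_k^{-1} (g_k t) = t$, so
\[
d_n \bigl( g_k^{-1} s,\, t \bigr) = d_n \bigl( g_k^{-1} s,\, g_k^{-1} (g_k t) \bigr),
\]
and I need this to tend to $0$. Since $X$ is locally compact, $s$ has a compact neighborhood $K \subset X^n$; eventually $g_k t \in K$. The hypothesis that $\{\phi_g\}_{g \in G}$ is locally equicontinuous on $X$ implies (by taking maxima over coordinates) that the same family acts locally equicontinuously on $X^n$, and because $G$ is a group, $\{\phi_g^{-1}\}_{g \in G} = \{\phi_g\}_{g \in G}$ is also locally equicontinuous. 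Thus on $K$, given $\varepsilon > 0$, there is $\delta > 0$ with $d_n(g_k^{-1} u, g_k^{-1} v) < \varepsilon$ whenever $u,v \in K$ satisfy $d_n(u,v) < \delta$, uniformly in $k$. Taking $u = s$, $v = g_k t$, and using $d_n(s, g_k t) \to 0$, we obtain $d_n(g_k^{-1} s, t) \to 0$, so $t \in [s]$.

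\textbf{Expected obstacle.} The only subtle point is the equicontinuity argument in Step 2; everything else is soft topology. I have to remember that the local equicontinuity hypothesis, being stated for $\{\phi_g\}_{g \in G}$ on $X$, transfers automatically to the product action on $X^n$ via the max metric, and that the inverse family is the same family because $G$ is a group. Given these, the switch from $g_k t \to s$ to $g_k^{-1} s \to t$ is the standard ``reverse the action'' move enabled by uniform control on the modulus of continuity.
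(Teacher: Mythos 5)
Your proof is correct and follows essentially the same route as the paper: one direction from closedness, then $[s]\subseteq[t]$ by continuity of each $g$, and $t\in[s]$ by applying $g_k^{-1}$ and using local equicontinuity of the (inverse-closed) family on a compact neighborhood of $s$. If anything, your write-up is slightly more careful than the paper's, which appears to contain a typo in placing the compact neighborhood around $t$ rather than $s$.
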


\begin{proof}
If $  s \sim   t$, then $  s \in [  t ]$ due to $[  t]$ being closed.
    Now suppose, conversely, that $  s \in [  t]$ -- that is, there exists a sequence $(g_i)_i \subset G$ so that $g_i   t \to   s$. Then, for any $h \in G$, the continuity of the action of $h$ implies $h g_i   t \to h   s$, hence the orbit of $  s$ belongs to $[  t]$, and consequently, $[  s] \subset [  t]$.
    To obtain the converse inclusion, it suffices to show that $  t \in [  s]$. Fix $\vr > 0$, and show that $d_n(g_i^{-1}  s,   t) < \vr$ for $i$ large enough. Let $U$ be an open neighborhood of $  t$, so that $\overline{U}$ is compact. Find $\delta > 0$ so that $d_n(g   x,g   y) < \vr$ if $  x,  y \in U$ satisfy $d_n(  x,  y) < \delta$. The desired conclusion now follows from the fact that, for $i$ large enough, $d_n(g_i  t,  s) < \delta$, and $g_i   t \in U$.
\end{proof}

Note that if every orbit of $G$ in $X$ is nowhere dense, then $X$ cannot have any isolated points, so $C_0(X)$ is atomless.

\begin{lemma}\label{l:dense}
We can find a dense sequence $t_1^0, t_2^0, \ldots \in X$ so that the orbits of these points have disjoint closures.
\end{lemma}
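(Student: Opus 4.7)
The plan is to carry out a straightforward Baire category construction, using Lemma~\ref{l:equivalences} to conclude that orbit closures are either equal or disjoint.

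First, I would record the partition property: for any $s,t\in X$, either $[s]=[t]$ or $[s]\cap[t]=\emptyset$. Indeed, applying Lemma~\ref{l:equivalences} in the case $n=1$, if $r\in[s]\cap[t]$, then $r\sim s$ and $r\sim t$, so $[s]=[r]=[t]$. Consequently, to guarantee that two orbit closures $[t_i^0]$ and $[t_j^0]$ are disjoint, it suffices to ensure that $t_j^0\notin[t_i^0]$.

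Next, I would use the hypotheses on $X$ and $G$. Since $X$ is Polish, and thus second countable, fix a countable basis $\{U_n\}_{n\in\N}$ of nonempty open sets. Since $X$ is locally compact Hausdorff (equivalently, a Polish locally compact space), $X$ is a Baire space. Each orbit of $G$ is nowhere dense by assumption, so each orbit closure $[t]$ is closed with empty interior. A finite union of such closed nowhere dense sets again has empty interior (its complement is an intersection of finitely many open dense sets, which is dense by Baire).

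The construction is then by induction. Having chosen $t_1^0,\ldots,t_{n-1}^0$ with the $[t_i^0]$ pairwise disjoint, the set $F_{n-1}:=[t_1^0]\cup\cdots\cup[t_{n-1}^0]$ is closed with empty interior, so $U_n\setminus F_{n-1}$ is a nonempty open set. Pick any $t_n^0$ in it. Then $t_n^0\notin[t_i^0]$ for $i<n$, hence by the partition property $[t_n^0]$ is disjoint from every $[t_i^0]$ with $i<n$. The resulting sequence $(t_n^0)_{n\in\N}$ meets every basic open set $U_n$ and is therefore dense in $X$, with pairwise disjoint orbit closures.

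There is no real obstacle here: the only subtlety worth flagging is that the partition statement relies on $[t]$ being defined as the \emph{closure} of the orbit (so Lemma~\ref{l:equivalences} applies), and that the finite union of the previously chosen orbit closures is nowhere dense—both are immediate from the stated hypotheses plus the Baire category theorem.
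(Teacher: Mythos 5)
Your proof is correct and follows essentially the same route as the paper's: a recursive selection of points avoiding the finite union of the (closed, nowhere dense) orbit closures already chosen, the only cosmetic difference being that you enumerate a countable basis to secure density whereas the paper perturbs a fixed dense sequence $(s_i)$ within $2^{-i}$. Your explicit appeal to Lemma~\ref{l:equivalences} for the dichotomy that two orbit closures are either equal or disjoint is a point the paper leaves implicit, but it does not change the argument.
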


\begin{proof}
    Let $(s_i)_i$ be a dense subset of $X$. Let $t_1 := s_1$. Beyond that, proceed recursively: suppose we have already defined $(t_i^0)_{i=1}^k$ so that, for every such $i$, $d(s_i, t_i) < 2^{-i}$, and the orbits of points $t_i$ ($1 \leq i \leq k$) have disjoint closures. As such orbits are nowhere dense, we can find $t_{k+1} \in X \backslash (\cup_{i=1}^k [t_i^0])$ so that $d(s_{k+1}, t_{k+1}) < 2^{-(k+1)}$. Clearly the sequence $(t_i)_i$ obtained in this manner is dense in $X$.
\end{proof}


We now introduce some notation which will be used throughout the remainder of the proof of this theorem.  Let $G'$ be a countable SOT-dense (equivalently, in light of Proposition \ref{p:characterize-SOT-convergence}, dense in the topology of uniform convergence on compact subsets of $X$) subgroup of $G$. 
Fix a dense sequence $(t_i^0) \subset X$, so that the $G$-orbits of the points $t_i^0$ have disjoint closures (such a sequence exists, by Lemma \ref{l:dense}). Let $(t_i^\gamma)_{\gamma=0 }^{\gamma <|G't_i^0|}$ be a sequence enumerating the elements of the $G'$-orbit of $t_i^0$.
Let  
\[ N := \{ (t_i^{\gamma_0}, t_{i+1}^{\gamma_1}, ... , t_{i+n}^{\gamma_n} ): i \in \N, n > 1, 0\leq \gamma_j < | G' t^0_{i+j} | \} \]

 We say that $  s= (s_0,...,s_n),   t = (t_0,...,t_n) \in X^{n+1}$ ($n \in \N$) are \textit{comparable} if $[s_i] = [t_i]$ for all $0 \leq i \leq n$.
Define $$\widetilde{N} = \{   s\in X^{<\infty}: s \text{ comparable to some } t\in N \} ,$$ and let $\iota: \widetilde{N}: \rightarrow \N^{< \infty}$ be the function taking $s\in \widetilde{N}$ to its underlying subscript indices: if $s$ is comparable to $\big(t_i^{\gamma_0}, \ldots, t_{i+n}^{\gamma_n} \big)$, then $\iota(s) = (i, \ldots, i+n)$.
Clearly, any $s,t\in \widetilde{N}$ are comparable iff $\iota(s) = \iota(t)$ (and, implicitly, $|s| = |t|$; here $|s|$ is the number of elements in the sequence $s$). \\

\medskip


Throughout this paper, we work extensively with elements of $X^{n+1}$ -- that is, with $(n+1)$-tuples of points of $X$.
In general, such an tuple $t$ will be written as $t = (t_0, t_1, \ldots, t_n)$. However, if the exact position of $t$ in $N$ needs to be emphasized, we shall write $t = (t_i^{\gamma_0},..., t_{i+n}^{\gamma_n})$, with $t_{i+r}^{\gamma_r} \in [t_{i+r}^0]$ defined above.

Another common notation describes the ``restriction'' of a tuple. 
For $t = (t_i, \ldots, t_{i+n}) \in X^{n+1}$ ($i \geq 0, n \in \N$), and $0 \leq a \leq b \leq n$, we denote by $t|_a^b$ to be the segment $(t_{i+a}, t_{i+a+1}..., t_{i+b-1}, t_{i+b})$; if $a=0$, we just write $t|^b$. \\

The same convention is applied to infinite sequences. 
We denote by $[[N]]$ the set of sequences $\tau = (\tau_j)_{j=0}^\infty = (t_i^{\gamma_0}, t_{i+1}^{\gamma_1}, ... )$. For such $\tau$ let $\tau|^b = (\tau_j)_{j=0}^b = (t_i^{\gamma_0}, t_{i+1}^{\gamma_1}, ... t_{i+b}^{\gamma_b})$.


The following combinatorial lemma will be essential to establishing Theorem \ref{t:main-theorem}.

\begin{lemma}\label{l:bmap}
Suppose a sequence $(\lambda_i)_i \subset (1,C)$ decreases to $1$. 
	There exist $L > 9$ and maps $b: N\rightarrow \R$, $c:\widetilde{N} \rightarrow 3\N$ with the following properties: 
	\begin{enumerate}
\item\label{b encodes} For all $t,s \in N$, $ b(t) = b(s) \iff gt = s$ for some $g\in G$.

\item\label{c encodes} For all $t,s \in \widetilde{N}$, $c(t) = c(s)$ $\iff$ $t$ and $s$ are comparable.

\item\label{large L} $\sum_{n=1}^\infty L^{-n} < C - \lambda_1$.

\item\label{value of b} For any $t \in N$, $b(t)\in [L^{c(t)-1}, L^{c(t)}]$,
  and the set $\{b(s) : s \in N, \iota(s) = \iota(t)  \}$ forms a strictly increasing (finite or infinite) sequence with supremum $L^{c(t)}$.

 \item\label{est on b}
 For $t = \big(t_i^{\gamma_0}, \ldots, t_{i+n}^{\gamma_n} \big)$, we have $b(t) \geq L^{3(i+n)-4}$.
 
 \item\label{b increases}
 For $t = \big(t_i^{\gamma_0}, \ldots, t_{i+n}^{\gamma_n} \big)$ and $t' = \big(t_i^{\gamma_0}, \ldots, t_{i+n}^{\gamma_n}, t_{i+n+1}^{\gamma_{n+1}} \big)$, we have $b(t') > L b(t)$.

\item\label{i:long sum} For any $\tau \in [[N]]$, with the initial element $\tau_0 = t_i^{\gamma_0}$, we have $$\lambda_i + \sum_{m=1}^\infty \frac{1}{b(\tau|^m)} < C . $$
 
	\end{enumerate}
\end{lemma}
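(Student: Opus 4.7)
My plan is to construct $c$ and $b$ in three stages: first pick a parameter $L > 9$ large enough to control the geometric series in (3) and (7); second, define $c$ on comparability classes with enough vertical room to satisfy the lower bound (5); third, define $b$ above this layer structure to distinguish $G$-orbits of tuples within each comparability class. The whole construction is essentially bookkeeping, and the only care needed is to arrange the parameters compatibly.

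Since $C - \lambda_1 > 0$, I would first pick $L > 9$ with $1/(L - 1) < C - \lambda_1$, which gives (3) at once. Next, noting that comparability classes on $\widetilde{N}$ are parametrised by the pair $(i, n)$ with $\iota(t) = (i, i+1, \ldots, i+n)$, I would fix an injection $\psi : \N_{\geq 1}^2 \to \N$ with (a) $\psi(i, n) \geq i + n - 1$ and (b) $\psi(i, n+1) \geq \psi(i, n) + 1$ -- for instance, the diagonal enumeration $\psi(i, n) = \binom{i + n - 1}{2} + i$ -- and set $c(t) := 3\psi(i, n)$ whenever $\iota(t) = (i, \ldots, i+n)$. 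This takes values in $3\N$ and gives (2) immediately.

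Then, within each comparability class with $c$-value $c_\star$, I would enumerate the (at most countably many) $G$-orbits of tuples from $N$ lying in that class as $O_1, O_2, \ldots$, and set $b(t) := L^{c_\star}(1 - L^{-k})$ for every $t \in O_k$; in the finite case of, say, $K$ orbits I would replace the last value by $L^{c_\star}$ so that the supremum is actually attained. Since $L > 2$, the values $\beta_k = L^{c_\star}(1 - L^{-k})$ lie in $[L^{c_\star - 1}, L^{c_\star}]$ and form a strictly increasing sequence with supremum $L^{c_\star}$. This immediately gives (1) and (4).

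It remains to verify (5), (6), and (7). Property (5) follows from (a): $b(t) \geq L^{c(t) - 1} = L^{3\psi(i, n) - 1} \geq L^{3(i + n) - 4}$. Property (6) follows from (b): extending $t$ to $t'$ pushes $\psi$ up by at least $1$, hence $c$ up by at least $3$, so $b(t') \geq L^{c(t') - 1} \geq L^{c(t) + 2} > L \cdot L^{c(t)} \geq L \cdot b(t)$. For (7), using (5),
\[
\sum_{m = 1}^\infty \frac{1}{b(\tau|^m)} \leq \sum_{m \geq 1} L^{-(3(i + m) - 4)} = \frac{L^{1 - 3i}}{1 - L^{-3}} \leq \frac{L^{-2}}{1 - L^{-3}} < \frac{1}{L - 1} < C - \lambda_1 \leq C - \lambda_i,
\]
which gives $\lambda_i + \sum_{m \geq 1} b(\tau|^m)^{-1} < C$. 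The only substantive choice is $\psi$: conditions (a)+(b) are precisely what allow $c$ to simultaneously carry the lower bound (5) and the strict increase under extension needed for (6), while injectivity of $\psi$ implements (2); everything else follows from the careful choice of $L$.
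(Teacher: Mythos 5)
Your construction is correct and follows essentially the same route as the paper's: encode each comparability class by an injective map into $3\N$ (your $\psi$ is just an explicit formula for the paper's triangular row-by-row enumeration), then separate orbits within a class by a strictly increasing sequence in $[L^{c-1},L^{c}]$ with supremum $L^{c}$, with $L$ chosen up front to control the geometric series. The only cosmetic differences are your closed-form choices $\psi(i,n)=\binom{i+n-1}{2}+i$ and $\beta_k=L^{c}(1-L^{-k})$ in place of the paper's pictorial enumeration and unspecified sequence $k_m^i\nearrow c_m$.
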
	

\begin{proof}
First, construct an enumeration $(a_m)_m$ of all finite consecutive sequences of positive integers of length at least 2 as follows:
\begin{align*}
a_1 &= (1,2) \\
a_{k+1} & = (i-1, i, i+1, ..., i+n) \text{ if }a_k = ( i, i+1, ..., i+n), i > 1\\
a_{k+1} & = (n,n+1)  \text{ if }a_k = ( 1, 2, ..., n)
\end{align*}

We can lay out $(a_m)_m$ in a triangular format as follows:
\begin{align*}
&(1,2)  		 \qquad & \qquad & \\
&(2,3), \quad \ \ (1,2,3), \qquad & \qquad &\\
&(3,4),\quad \ \ (2,3,4),\qquad \quad \ \ (1,2,3,4), & \qquad  & \\
& \vdots \\
&(i,i+1),\ (i-1,i,i+1),\ \ (i-2,i-1,i,i+1), \ \  \dots, \ \ (1,...,i+1)
\end{align*}
Our enumeration proceeds left to right in each row, and then jumps to the start of the next row. Note that tuples in the same column have the same length, and tuples in the same ``North-West to South-East'' diagonal begin with the same element.  

Given $m \in \N$, let \[ N_m = \{ t\in \widetilde{N}: \iota(t) = a_m  \}\]
For $t \in N_m$ let $c(t) :=c_m = 3m$. As this $c$ encodes comparability, it satisfies item 2 of the lemma.
Partition $N_m$ into $\sim$-equivalence classes $S_m^1,S_m^2,...$ -- that is, $t$ and $s$ belong to the same $S_m^i$ iff $[t] = [s]$, and $S_m^i \cap S_m^j = \emptyset$ if $i \neq j$. 
For the sequence $(S_m^i)_i$, find a strictly increasing sequence $(k_m^i)_i \subseteq [c_m - 1, c_m]$, with $\lim_i k_m^i = c_m$ (if the sequence is finite -- that is, if $N_m$ splits into finitely many equivalence classes -- pick $c_m$ for the last point). For any $t \in S_m^i$, let $b'(t) = k_m^i$, then $b'(t) = b'(s)$  iff $[t] = [s]$. It follows that Properties \ref{b encodes} and \ref{c encodes} hold.

Pick $L > 9$ such that $\sum_{n=1}^\infty L^{-n} < C - \lambda_1$ (that is \ref{large L} holds), and let $b(t) = L^{b'(t))}$.
By our choice of $b'(t) = k_m^i$, Property \ref{value of b} also holds. 

To establish Property \ref{b increases}, suppose $a_m = (i, \ldots, i+n)$ and $a_\ell = (i, \ldots, i+n, i+n+1)$. Then $a_\ell$ is located in the row below $a_m$, hence $\ell > m$. By the above, $b \big(t_i^{\gamma_0}, \ldots, t_{i+n}^{\gamma_n}\big) \leq L^{c_m}$, and $b \big(t_i^{\gamma_0}, \ldots, t_{i+n}^{\gamma_n}, t_{i+n+1}^{\gamma_{n+1}}\big) \geq L^{c_\ell-1}$. Now recall that $c_\ell - c_m \geq 3$.

Property \ref{i:long sum} directly follows from Properties \ref{value of b} and \ref{large L}.

To establish Property \ref{est on b}, suppose $a_m = (i, \ldots, i+n)$. Then $a_m$ sits in row $i+n-1$ of our table.
The preceding $i+n-2$ rows contain in total $1 + 2 + \ldots + (i+n-2) \geq i+n-2$ elements, hence $m \geq i+n-1$, so $c_m \geq 3(i+n-1)$. Now recall that $b \big(t_i^{\gamma_0}, \ldots, t_{i+n}^{\gamma_n}\big) \geq L^{c_m - 1}$.
\end{proof}

\begin{remark}
    The enumeration described above has an additional interesting property: if $a_m = (i, \ldots, i+n)$ and $a_\ell = (p, \ldots, p+q)$, with $q \leq n$ and $j \leq i$, then $\ell \leq m$.
\end{remark}

\section{Proof of Theorem \ref{t:main-theorem}: the new norm and its properties}\label{s:new norm}

We now define an equivalent norm on $C_0(X)$:
\begin{equation}
\triple{x} = \sup_{t \in N} \rho_t(x),
\label{eq:define_new_norm}
\end{equation}
where, for $t = \big( t_i^{\gamma_0} , \ldots, t_{i+n}^{\gamma_n} \big) \in N$, the seminorm $\rho_t( \cdot )$ is defined by

\begin{equation}
\rho_t(x) = \lambda_i |x(t_i^{\gamma_0})| + \sum_{k=1}^{n} \frac{| x(t_{k+i}^{\gamma_k})|}{b(t|^k)} .
\label{eq:invariance}
\end{equation}
Our goal is to show that this norm has the properties outlined in Theorem \ref{t:main-theorem}. Observe first an invariance property of the seminorms from \eqref{eq:invariance}.

\begin{lemma}\label{l:rho-invariance}
For any $t \in N$, $x \in C_0(X)$, and $g \in G'$, we have $\rho_{gt}(x) = \rho_t(gx)$.
\end{lemma}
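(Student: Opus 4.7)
The plan is to expand both sides of the claimed equality directly and verify term-by-term, using only (i) the fact that $G'$ permutes each orbit $[t_i^0]$, (ii) the invariance of $b$ under $G$ described by Property (1) of Lemma \ref{l:bmap}, and (iii) the identity $[gx](y) = x(\phi_g(y))$ for lattice isometries.

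First I would check that $gt$ actually lies in $N$, so that $\rho_{gt}$ is defined by formula \eqref{eq:invariance}. Writing $t = (t_i^{\gamma_0}, \ldots, t_{i+n}^{\gamma_n}) \in N$ and $g \in G'$, each point $g t_{i+k}^{\gamma_k}$ remains in the $G'$-orbit of $t_{i+k}^0$, so it must equal $t_{i+k}^{\gamma'_k}$ for some index $\gamma'_k$. Hence $gt = (t_i^{\gamma'_0}, \ldots, t_{i+n}^{\gamma'_n}) \in N$ with the same leading index $i$ and the same $\iota$-sequence as $t$. In particular, both $\rho_t$ and $\rho_{gt}$ carry the same coefficient $\lambda_i$ on the first term.

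Next I would handle the denominators. Since $G$ acts on tuples coordinatewise, $g(t|^k) = (gt)|^k$ for each $1 \leq k \leq n$. Because $g \in G' \subset G$ and both $t|^k$ and $(gt)|^k$ belong to $N$, Property (\ref{b encodes}) of Lemma \ref{l:bmap} gives $b((gt)|^k) = b(t|^k)$. Meanwhile, $G$ consists of lattice isometries, so $a_g \equiv 1$ and $[gx](y) = x(\phi_g(y)) = x(g(y))$ for all $y \in X$. Combining these ingredients and substituting into \eqref{eq:invariance}:
\begin{align*}
\rho_{gt}(x) &= \lambda_i \big|x\big(g t_i^{\gamma_0}\big)\big| + \sum_{k=1}^n \frac{\big|x\big(g t_{i+k}^{\gamma_k}\big)\big|}{b((gt)|^k)} \\
&= \lambda_i \big|[gx]\big(t_i^{\gamma_0}\big)\big| + \sum_{k=1}^n \frac{\big|[gx]\big(t_{i+k}^{\gamma_k}\big)\big|}{b(t|^k)} = \rho_t(gx).
\end{align*}

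This lemma is essentially a bookkeeping check, so I do not anticipate a real obstacle: the only nontrivial input is the $G$-invariance of $b$, which is built into its construction. The reason to work with $g \in G'$ (rather than all of $G$) is precisely to guarantee that $gt$ stays inside $N$ so that the formula for $\rho_{gt}$ applies verbatim; extending to all $g \in G$ would require continuity arguments and will presumably be handled later via density of $G'$.
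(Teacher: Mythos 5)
Your proof is correct and follows essentially the same route as the paper's: both rest on the observation that $g \in G'$ maps $N$ to itself preserving the index sequence $\iota(t)$, together with the $G$-invariance of $b$ from Lemma \ref{l:bmap}(\ref{b encodes}) and the identity $[gx](y) = x(gy)$. Your version merely spells out the bookkeeping (that $gt \in N$ and that the leading coefficient stays $\lambda_i$) a bit more explicitly than the paper does.
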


Recall that we identify the action of group $G$ on $C_0(X)$ with the induced action on $X$, that is, $[gx](k) = x(gk)$.

\begin{proof}
From the definition of $\rho_t$,
$$
\rho_t(gx) = \lambda_i |x(gt_i^{\gamma_0})| + \sum_{k=1}^{n} \frac{| x(gt_{k+i}^{\gamma_k})|}{b(t|^{k})} .
$$
By the definition of $b( \cdot )$ given in Section \ref{s:combinatorial}, $b(gs) = b(s)$ for any $s\in N$. Consequently, the right hand side of the centered expression above equals
$$
\lambda_i |x(gt_i^{\gamma_0})| + \sum_{k=1}^{n} \frac{| x(gt_{k+i}^{\gamma_k})|}{b(gt|^{k})} = \rho_{gt}(x) . \qedhere
$$
\end{proof}

For future use, we gather some simple facts about $\triple{ \cdot }$.

\begin{lemma}\label{l:simple_properties} 
 For any $x \in C_0(X)$, we have $\|x\| \leq \triple{x} \leq C \|x\|$.
  Furthermore, the inequality $\tri{x} \geq \lambda_i |x(t)|$ holds for any $t \in [t_i^0]$.
\end{lemma}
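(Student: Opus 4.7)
The plan is to derive both inequalities directly from the definition \eqref{eq:define_new_norm} of $\triple{\cdot}$. The upper bound will come from bounding $|x(\cdot)|$ by $\|x\|$ in each seminorm $\rho_t$ and invoking Property \ref{i:long sum} of Lemma \ref{l:bmap}; the lower bound and the ``furthermore'' clause will follow by choosing tuples $t \in N$ whose leading term captures the value of $x$ at a prescribed point, combined with two density arguments.

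For the upper bound $\triple{x} \leq C\|x\|$, I will fix an arbitrary $t = (t_i^{\gamma_0}, \ldots, t_{i+n}^{\gamma_n}) \in N$ and extend it to an infinite sequence $\tau \in [[N]]$ by appending $t_{i+n+k}^0$ for $k \geq 1$ (which is admissible since $\gamma = 0$ is always available). Applying $|x(\cdot)| \leq \|x\|$ termwise in \eqref{eq:invariance} and invoking Property \ref{i:long sum} gives
$$\rho_t(x) \leq \Bigl( \lambda_i + \sum_{k=1}^n \tfrac{1}{b(t|^k)} \Bigr) \|x\| \leq \Bigl( \lambda_i + \sum_{m=1}^\infty \tfrac{1}{b(\tau|^m)} \Bigr) \|x\| < C\|x\|.$$
Taking the supremum over $t \in N$ yields the claim.

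For the ``furthermore'' clause, I will first treat points in the $G'$-orbit $G' t_i^0$: given $t_i^\gamma$, the tuple $s = (t_i^\gamma, t_{i+1}^0, t_{i+2}^0) \in N$ satisfies $\rho_s(x) \geq \lambda_i |x(t_i^\gamma)|$ just from its first term, so $\triple{x} \geq \lambda_i |x(t_i^\gamma)|$. To extend this to the whole closure $[t_i^0] = \overline{G t_i^0}$, I will verify that $\overline{G' t_i^0} = [t_i^0]$: given $g \in G$, the SOT-density of $G'$ provides $g_n \in G'$ with $g_n \to g$ in SOT, and Proposition \ref{p:characterize-SOT-convergence}(1) forces $\phi_{g_n}(t_i^0) \to \phi_g(t_i^0)$. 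Continuity of $x$ then upgrades the estimate to $\triple{x} \geq \lambda_i |x(t)|$ for every $t \in [t_i^0]$.

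The bound $\|x\| \leq \triple{x}$ is then immediate: for $\epsilon > 0$, density of $(t_i^0)$ in $X$ combined with continuity of $x$ produces an $i$ with $|x(t_i^0)| > \|x\| - \epsilon$, and the ``furthermore'' clause applied at $t = t_i^0$ gives $\triple{x} \geq \lambda_i |x(t_i^0)| > \|x\| - \epsilon$ since $\lambda_i > 1$; letting $\epsilon \to 0$ finishes the argument. I do not anticipate any serious obstacle; the only point requiring a moment's thought is the identification $\overline{G' t_i^0} = [t_i^0]$, and that reduces to pointwise convergence of $\phi_{g_n}$ at a single point, which is free from SOT-convergence.
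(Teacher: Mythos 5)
Your proof is correct and follows essentially the same route as the paper: the upper bound via Lemma \ref{l:bmap}(\ref{i:long sum}), the ``furthermore'' clause via a tuple in $N$ with leading entry $t_i^\gamma$ followed by a density/continuity argument, and the lower bound from density of $(t_i^0)$ in $X$. You are in fact slightly more careful than the paper on two implicit points --- extending a finite tuple to an element of $[[N]]$ so that Property (\ref{i:long sum}) applies, and verifying $\overline{G' t_i^0} = [t_i^0]$ from the SOT-density of $G'$ --- both of which are handled correctly.
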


\begin{proof}
 Fix $x \in C_0(X)$ and $k \in X$. By picking $t_i^{\gamma_0}$ arbitrarily close to $k$, we have $|x(k)| \leq \triple{x}$. Consequently, $\|x\| \leq \tri{x}$. 
On the other hand, Lemma \ref{l:bmap}(\ref{i:long sum}) shows that $\rho_t(x) \leq C \|x\|$ for any $x \in C_0(X)$ and $t \in N$.

 To handle the ``furthermore'' statement, suppose first that $t = t_i^\gamma$ for some $\gamma$. Set $s = (t_i^\gamma, t_{i+1}^0)$, and recall that $\tri{x} \geq \rho_s(x) \geq \lambda_i |x(t)|$.
For an arbitrary $t \in [t_i^0]$, use the continuity of $x$, and approximation of $t$ by $t_i^\gamma$'s.
\end{proof}

Our goal is to show that $G = \isom(C_0(X), \tri{\cdot})$. The inclusion in one direction is easy to establish.

\begin{lemma}\label{l:G_invariance}
	In the above notation, $G \subseteq \isom(C_0(X), \tri{\cdot})$.
\end{lemma}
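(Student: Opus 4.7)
The strategy is to establish $\tri{gx} = \tri{x}$ for every $g \in G$ and $x \in C_0(X)$. Once this is known, combined with the fact that each $g \in G$ already acts as a lattice isomorphism on $C_0(X)$ (the lattice structure is unchanged by renorming), we conclude $g \in \isom(C_0(X), \tri{\cdot})$. The argument proceeds in two stages: first for the countable dense subgroup $G'$, where the defining set $N$ is preserved on the nose, and then by approximation for an arbitrary $g \in G$ using SOT-density of $G'$ together with the norm equivalence from Lemma \ref{l:simple_properties}.

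For $g \in G'$, my claim is that $gN = N$. Indeed, if $t = (t_i^{\gamma_0}, \ldots, t_{i+n}^{\gamma_n}) \in N$, then since $G'$ is a subgroup, each $gt_{i+k}^{\gamma_k}$ lies in the $G'$-orbit of $t_{i+k}^0$, hence is again of the form $t_{i+k}^{\gamma_k'}$ for some $\gamma_k' < |G't_{i+k}^0|$; the starting index $i$ is preserved because the orbits $[t_j^0]$ are pairwise disjoint. Thus $gt = (t_i^{\gamma_0'}, \ldots, t_{i+n}^{\gamma_n'}) \in N$, so $gN \subseteq N$, and the same argument applied to $g^{-1} \in G'$ yields $gN = N$. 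Invoking Lemma \ref{l:rho-invariance}, we compute
\[
\tri{gx} = \sup_{t \in N} \rho_t(gx) = \sup_{t \in N} \rho_{gt}(x) = \sup_{s \in gN} \rho_s(x) = \sup_{s \in N} \rho_s(x) = \tri{x}.
\]

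For arbitrary $g \in G$, use the SOT-density of $G'$ in $G$ to select a sequence $(g_n) \subset G'$ with $g_n \to g$ in SOT. Then for every $x \in C_0(X)$, $\|g_n x - gx\| \to 0$, and the inequality $\tri{\cdot} \leq C\|\cdot\|$ from Lemma \ref{l:simple_properties} gives $\tri{g_n x - gx} \to 0$, so $\tri{g_n x} \to \tri{gx}$. Since $\tri{g_n x} = \tri{x}$ by the previous step, we conclude $\tri{gx} = \tri{x}$, as desired. The only potential obstacle lies in passing from SOT convergence of operators to convergence of the supremum defining $\tri{\cdot}$, but this is handled transparently by the two-sided norm equivalence, so the density argument is routine once the $G'$-invariance is secured.
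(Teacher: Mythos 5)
Your proof is correct and follows essentially the same route as the paper: exact invariance of $\tri{\cdot}$ under the countable dense subgroup $G'$ (via Lemma \ref{l:rho-invariance} and the fact that $G'$ permutes $N$), followed by an SOT-density argument to pass to all of $G$. The only cosmetic difference is that the paper perturbs each seminorm $\rho_t$ at the finitely many points of $t$, whereas you invoke the global bound $\tri{\cdot} \leq C\|\cdot\|$ from Lemma \ref{l:simple_properties} to transfer $\|hx - gx\| \to 0$ into $\tri{hx - gx} \to 0$; both are valid.
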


\begin{proof}
	It suffices to show that, for any $x \in C_0(X)$, any $g \in G$, any $\varepsilon > 0$, and for any $t = (t_0, \ldots, t_n) \in N$ we have $\tri{gx} \geq \rho_t(x) - \varepsilon$.
	Recall that $G'$ is dense in $G$ in the SOT topology (equivalently, in the topology of uniform convergence on $X$), hence we can find $h \in G'$ so that $|x(t_i) - x(h^{-1}gt_i)| \leq \varepsilon/(C(n+1))$ for $0 \leq i \leq n$. By Lemma \ref{l:rho-invariance},
	$$
	\tri{gx} \geq \rho_{h^{-1}t}(gx) \geq \rho_{h^{-1}t}(hx) - C \sum_i |x(t_i) - x(h^{-1}gt_i)| \geq \rho_t(x) - \varepsilon ,
	$$
	which is what we need.
\end{proof}

The proof of Theorem \ref{t:main-theorem} relies on computing norms of finite linear combinations of atoms in $(C_0(K), \tri{ \cdot })^*$. We therefore need the following consequence of Lemma \ref{l:G_invariance}. 
Here and below, $\tri{ \cdot }^*$ denotes the norm on the space of finite signed Radon measures on $X$, dual to $\tri{ \cdot }$.

\begin{lemma}\label{l:limits of atoms}
Suppose $t = (t_1, \ldots, t_n), s = (s_1, \ldots, s_n) \in X^n$ are such that $[t] = [s]$. Then, for any $\beta_1, \ldots, \beta_n \in \R$, we have
$\tri{ \sum_{k=1}^n \beta_k \delta_{t_k} }^* = \tri{ \sum_{k=1}^n \beta_k \delta_{s_k} }^*$.
\end{lemma}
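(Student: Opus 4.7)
The plan is to exploit that every element of $G$ is a $\tri{\cdot}$-isometry (Lemma \ref{l:G_invariance}), and that $[t]=[s]$ means $s$ can be approximated by the $G$-orbit of $t$ in $X^n$ (Lemma \ref{l:equivalences}). Concretely, I would fix sequences $(g_i)_i \subset G$ with $g_i t \to s$ coordinatewise, and transfer the convergence across the duality pairing.

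First, I would recall how $G$ acts on point masses. Since $g \in G$ acts by $[gx](y) = x(\phi_g(y))$ (with $a_g \equiv 1$), one has
$$
\sum_{k=1}^n \beta_k (gx)(t_k) = \sum_{k=1}^n \beta_k x(\phi_g(t_k)) = \sum_{k=1}^n \beta_k x(g t_k),
$$
which in dual language says $g^*\!\sum_k \beta_k \delta_{t_k} = \sum_k \beta_k \delta_{gt_k}$. Because $g$ is a $\tri{\cdot}$-isometry by Lemma \ref{l:G_invariance}, $g^*$ is a $\tri{\cdot}^*$-isometry; in particular
$$
\tri{\textstyle\sum_k \beta_k \delta_{t_k}}^{*} = \tri{\textstyle\sum_k \beta_k \delta_{gt_k}}^{*}
\qquad \text{for every } g \in G.
$$

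Next, I would pass to the limit. Using Lemma \ref{l:equivalences}, pick $(g_i) \subset G$ with $g_i t_k \to s_k$ for each $k$. For any $x \in C_0(X)$ with $\tri{x} \leq 1$, continuity of $x$ at each $s_k$ gives
$$
\Bigl|\sum_{k=1}^n \beta_k x(s_k)\Bigr|
= \lim_i \Bigl|\sum_{k=1}^n \beta_k x(g_i t_k)\Bigr|
= \lim_i \Bigl|\sum_{k=1}^n \beta_k (g_i x)(t_k)\Bigr|
\leq \tri{\textstyle\sum_k \beta_k \delta_{t_k}}^{*} \tri{g_i x},
$$
and $\tri{g_i x} = \tri{x} \leq 1$ by isometry. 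Taking the supremum over such $x$ yields
$\tri{\sum_k \beta_k \delta_{s_k}}^{*} \leq \tri{\sum_k \beta_k \delta_{t_k}}^{*}$. Interchanging the roles of $t$ and $s$ (which is legitimate since $[t]=[s]$ is symmetric, giving a sequence $(h_j) \subset G$ with $h_j s \to t$) gives the reverse inequality and hence equality.

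There is no real obstacle here; the only subtlety is being careful that the convergence $g_i t \to s$ is used only through continuity of the fixed function $x$ on the finite set of coordinates $\{s_k\}$, rather than any uniformity, so the argument needs no compactness or equicontinuity hypotheses beyond what is already built into the definition of $[t]$.
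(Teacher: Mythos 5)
Your proof is correct and follows essentially the same route as the paper: approximate $s$ by $g_i t$, use that each $g_i$ is a $\tri{\cdot}$-isometry (Lemma \ref{l:G_invariance}), and pass to the limit — your pairing argument against a fixed $x$ with $\tri{x}\leq 1$ is just the weak$^*$ lower semicontinuity of the dual norm unpacked by hand. The only (minor) divergence is in the reverse inequality: the paper invokes local equicontinuity to deduce $g_i^{-1}s \to t$, whereas you use that $t \in [t] = [s]$ to extract a fresh sequence $h_j s \to t$, which is a slight simplification and equally valid.
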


\begin{proof}
Find a sequence $(g_j) \subset G$ so that $(s_1^{(j)}, \ldots, s_n^{(j)}) := g_j t \to s$. Then
 $$
 {\mathrm{weak}}^*-\lim_j \sum_{k=1}^n \beta_k \delta_{s_k^{(j)}} = \sum_{k=1}^n \beta_k \delta_{s_k} .
 $$
 Therefore, taking the $G$-invariance of $\tri{ \cdot }$ (and $\tri{ \cdot }^*$) into account, we obtain
 $$
 \tri{ \sum_{k=1}^n \beta_k \delta_{s_k} } \leq \liminf_j \tri{ \sum_{k=1}^n \beta_k \delta_{s_k^{(j)}} } = \tri{ \sum_{k=1}^n \beta_k \delta_{t_k} } .
 $$
 
 By the local equicontinuity of the action of $G$, we have $g_j^{-1} s \to t$. The same reasoning as above yields
 $$
 \tri{ \sum_{k=1}^n \beta_k \delta_{s_k} } \geq \tri{ \sum_{k=1}^n \beta_k \delta_{t_k} } .
 \qedhere
 $$
\end{proof}

To complete the proof of Theorem \ref{t:main-theorem}, we shall show that any isometry in $(C_0(X), \tri{\cdot})$ must also be in $G$.  To this end, we use the following series of lemmas that characterize the dual norm $\triple{\cdot}$.  


\begin{lemma}\label{l:unit}
	Suppose $t\in X$ with $\|t\| = 1$. If $t \in [t_i^0]$ for some $i$, then $\triple{\lambda_i \delta_t}^*= 1$. If $t \notin \cup_i [t_i^0]$, then $\triple{\delta_t}^* = 1$.
\end{lemma}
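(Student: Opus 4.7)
The plan is to sandwich each dual norm between matching upper and lower bounds. The upper bounds come directly from Lemma \ref{l:simple_properties}: for $t \in [t_i^0]$, the ``furthermore'' clause yields $\lambda_i|x(t)| \leq \triple{x}$ for every $x$, so $\triple{\lambda_i\delta_t}^* \leq 1$; for $t \notin \bigcup_i [t_i^0]$, the weaker estimate $|x(t)| \leq \|x\| \leq \triple{x}$ gives $\triple{\delta_t}^* \leq 1$.

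For the lower bounds, I would fix a large integer $M$ and, using the pairwise disjointness of the closed orbits $[t_j^0]$ guaranteed by Lemma \ref{l:dense}, select a relatively compact neighborhood $U$ of $t$ disjoint from $[t_j^0]$ for every $j \leq M$ except $j = i$ (drop this exception when $t \notin \bigcup_j [t_j^0]$). Urysohn's lemma then produces a bump $x \in C_0(X)$ with $0 \leq x \leq \alpha$, $x(t) = \alpha$, and $\mathrm{supp}\, x \subseteq U$, the height $\alpha > 0$ to be calibrated at the end.

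The bulk of the work is estimating $\rho_{t'}(x)$ for each $t' = (t_j^{\gamma_0},\ldots,t_{j+n}^{\gamma_n}) \in N$. Since $\mathrm{supp}\, x \subseteq U$, a position-$k$ term survives only when $t_{j+k}^{\gamma_k} \in U$, i.e.~when $j+k = i$ or $j+k > M$. Splitting by the starting index $j$: the case $j = i$ contributes the dominant $\lambda_i\alpha$ together with a tail bounded by $\alpha R_M$, where $R_M := L^{1-3M}/(1-L^{-3})$ comes from geometric summation via Lemma \ref{l:bmap}(\ref{est on b}); the case $j > M$ contributes $\lambda_{M+1}\alpha + \alpha R_M$, using the monotonicity of $(\lambda_i)$; and for $i < j \leq M$ only the tail $\alpha R_M$ survives. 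Collecting these and setting $\alpha = 1/(\lambda_i + R_M)$ (respectively $1/(\lambda_{M+1} + R_M)$ in case 2) gives $\triple{x} \leq 1$ together with $\lambda_i x(t) \to 1$ (resp.~$x(t) \to 1$) as $M \to \infty$, which delivers the required lower bounds.

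The main obstacle is the residual case $j < i$, in which the tuple $t'$ may hit $[t_i^0]$ at position $k = i - j$; there the denominator $b(t'|^{i-j})$ is independent of $M$, so one cannot shrink it by enlarging $M$. Here I would invoke Lemma \ref{l:bmap}(\ref{est on b}) to secure $b(t'|^{i-j}) \geq L^{3i-4}$; since $L \geq 9$, for $i \geq 2$ this bounds the troublesome contribution by $\alpha L^{4-3i} \leq \alpha/81$, harmlessly absorbed by the $\lambda_i\alpha$ term, while for $i = 1$ the inequality $j < 1$ renders the case vacuous. This is exactly where the rapid growth of $b$ engineered in Section \ref{s:combinatorial} earns its keep.
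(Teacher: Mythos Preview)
Your proof is correct and follows essentially the same route as the paper: the upper bounds come from Lemma~\ref{l:simple_properties}, and the lower bounds are obtained by building a bump function supported on a neighborhood of $t$ that avoids $[t_j^0]$ for $j \leq M$, $j \neq i$, then estimating $\rho_{t'}(x)$ via a case split on the starting index. The paper's four cases (i)--(iv) match your cases $j > M$, $i < j \leq M$, $j = i$, $j < i$, and your handling of the critical $j < i$ case via $b(t'|^{i-j}) \geq L^{3i-4}$ is exactly the paper's case (iv). The only cosmetic difference is that the paper fixes the bump height at $1/\lambda_i$ and shows $\tri{x} \leq 1+\varepsilon$, whereas you calibrate $\alpha = 1/(\lambda_i + R_M)$ to force $\tri{x} \leq 1$ and let $M \to \infty$; both normalizations yield the same conclusion.
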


\begin{proof}
Suppose first $t \in [t_i^0]$.
As noted before, for every $x \in C_0(X)$ we have $\lambda_i |x(t)| \leq \tri{x}$,
hence $\tri{ \lambda_i \delta_t}^*	\leq 1$.  

To prove the opposite inequality, fix $\varepsilon > 0$, and find $x \in C_0(X)_+$ so that $\rho_s(x) \leq 1+\vr$ for any $s = \big( s_p^{\kappa_0}, \ldots, s_{p+q}^{\kappa_q} \big) \in N$ ($p, q \geq 1$), and $\lambda_i x(t) = 1$.
To this end, find $M$ so large that $\lambda_M + L^{5-3M} < 1 + \vr$.
Find an open set $U \subset X$ containing $t$, and disjoint from $\cup_{j \in \{1, \ldots, M\} \backslash \{i\}} [t_j^0]$.
Use Urysohn's Lemma to find $x \in C_0(X)$ so that $0 \leq x \leq 1/\lambda_i$, $x = 0$ outside of $U$, and $\lambda_ix(t_i^k) = 1$. Fix $S$, and shows that
\begin{equation}
	\rho_s(x) = \lambda_p x( s_p^{\kappa_0} ) + 
	\sum_{r=1}^q \frac{x( s_{p+r}^{\kappa_r} )}{b ( s|^{r} )} \leq 1 + \vr
 \label{eq:desired_est}
\end{equation}
Once this is shown we shall conclude that $x$ satisfies $\tri{x} \leq 1 + \vr$, and therefore,
$$
\tri{ \lambda_i \delta_t}^*	\geq \frac{\lambda_i \delta_t(x)}{\tri{x}} = \frac1{1+\vr} .
$$
As $\vr$ can be arbitrarily small, we conclude that $\tri{ \lambda_i \delta_t}^*	\geq 1$.

To establish \eqref{eq:desired_est}, consider four cases.

(i) $p \geq M$. Recall that, by Lemma \ref{l:bmap}(\ref{est on b}), we have $b ( s|_{p}^{p+r}) \leq L^{4-3(p+r)}$. As the values of $x$ do not exceed $1/\lambda_i$, we have
$$
\rho_s(x) \leq \frac1{\lambda_i} \Big( \lambda_p + \sum_{r=1}^q L^{4-3(p+r)} \Big)
\leq \frac1{\lambda_i} \Big( \lambda_M + L^{5-3M} \Big) < 1 .
$$

(ii) $i < p < M$. By our choice of $x$ (and $U$), $x( s_{p+r}^{\kappa_r} ) = 0$ for $p+r < M$, hence
$$
\rho_s(x) \leq \frac1{\lambda_i} \sum_{r=M-p}^q L^{4-3(p+r)} < 1 .
$$

(iii) $p=i$. Again $x( s_{p+r}^{\kappa_r} ) = 0$ for $i < p+r < M$, so 
$$
\rho_s(x) \leq \frac1{\lambda_i} \Big( \lambda_i + \sum_{r=M-p}^q L^{4-3(p+r)} \Big) < 1 + L^{5-3M} < 1 + \vr.
$$

(iv) $1 \leq p < i$. Removing all vanishing terms, we obtain (invoking Lemma \ref{l:bmap}(\ref{est on b}) again)
$$
\rho_s(x) \leq \frac1{\lambda_i} \Big( \frac{x(s_i^{\kappa_{i-p}})}{b(s|^{i-p})} + \sum_{r=M-p}^q L^{4-3(p+r)} \Big) < \frac1{\lambda_i} \big( L^{-2} + L^{5-3M} \big) \leq 1 .
$$

Now consider $t \notin \cup_i [t_i^k]$.  To estimate $\tri{\delta_t}^*$ from above, fix $x \in C_0(X)$ and $\varepsilon > 0$. We have to show that $|x(t)| \leq \tri{x}+\vr$.
Find $\delta > 0$ so that $|x(s) - x(t)| < \vr$ whenever $d(s,t) < \delta$. Find $i, \gamma$ so that $d(t_i^\gamma,t) < \delta$. Then
$$
|x(t)| < \vr + \big| x \big( t_i^\gamma \big) | \leq \vr + \lambda_i \big| x \big( t_i^\gamma \big) | \leq \vr + \tri{x} ,
$$
as desired.

Estimating $\tri{\delta_t}^*$ from below proceeds as in the case of $t \in [t_i^0]$.
Just as before, fix $\varepsilon > 0$, and choose $M \in \N$ such that $\lambda_M + L^{5-3M} < 1 + \vr$. Find an open set $U$, containing $t$, which does not meet $\cup_{i \leq M} [t_i^0]$, and construct a function $x \in C_0(X)$ so that $0 \leq x \leq 1 = x(t)$, and $x$ vanishes outside of $U$. 
Clearly $\delta_t(x) = 1$. As in the earlier part of this proof, we show that $\rho_s(x) < 1 + \vr$ for any $s \in N$.
Consequently, $\tri{\delta_t}^* \geq x(t)/\tri{x} \geq 1/(1+\varepsilon)$.
\end{proof}

To characterize isometries on $(C_0(X), \tri{ \cdot })$, we shall need to evaluate norms of finite linear combinations of $\delta_t$'s. Before proceeding, we need to establish some geometric facts.
The first one is fairly straightforward.

\begin{lemma}\label{l:fin_dim}
	Suppose $Z$ is an $n$-dimensional Banach space, and there exist $z_0 \in Z$, $z_1^*, \ldots, z_n^* \in Z^*$ so that $\|z_0\| = 1$, and for any $i$, $\|z_i^*\|^* \leq 1$, and $z_i^*(z_0) = 1$. Then, for any $\alpha_1, \ldots, \alpha_n \geq 0$, we have $\sum_i \alpha_i = \| \sum_i \alpha_i z_i^*\| = (\sum_i \alpha_i z_i^*)(z_0)$.
\end{lemma}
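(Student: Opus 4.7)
The plan is to prove this by a simple sandwich argument showing all three quantities coincide with $\sum_i \alpha_i$. The finite-dimensionality hypothesis appears not to be needed for the statement itself (it is presumably used wherever the lemma is invoked), so I would not try to exploit it.

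First I would compute the rightmost equality directly. Since each $z_i^*(z_0) = 1$, linearity gives
\[
\Bigl(\sum_i \alpha_i z_i^*\Bigr)(z_0) = \sum_i \alpha_i z_i^*(z_0) = \sum_i \alpha_i .
\]
This establishes $(\sum_i \alpha_i z_i^*)(z_0) = \sum_i \alpha_i$ and shows $\sum_i \alpha_i \in \R$ is attained as a value of the functional $\sum_i \alpha_i z_i^*$ at a norm-one vector. Consequently, by the definition of the dual norm,
\[
\sum_i \alpha_i = \Bigl(\sum_i \alpha_i z_i^*\Bigr)(z_0) \leq \Bigl\| \sum_i \alpha_i z_i^* \Bigr\|^* \cdot \|z_0\| = \Bigl\| \sum_i \alpha_i z_i^* \Bigr\|^* .
\]

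For the reverse inequality, I would use the triangle inequality together with $\alpha_i \geq 0$ and $\|z_i^*\|^* \leq 1$:
\[
\Bigl\| \sum_i \alpha_i z_i^* \Bigr\|^* \leq \sum_i \alpha_i \|z_i^*\|^* \leq \sum_i \alpha_i .
\]
Combining the two chains of inequalities, all three quantities coincide, which is exactly the claim. There is no genuine obstacle here; the only subtlety is the sign hypothesis $\alpha_i \geq 0$, which is essential for the triangle-inequality step to collapse the coefficients back to $\sum_i \alpha_i$ (rather than $\sum_i |\alpha_i|$, which would of course agree in this case but not in general).
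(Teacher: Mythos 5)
Your proof is correct and is exactly the standard sandwich argument the paper has in mind (the paper labels this lemma ``fairly straightforward'' and omits the proof entirely): evaluation at $z_0$ gives the lower bound for the dual norm, and the triangle inequality with $\alpha_i \geq 0$ and $\|z_i^*\|^* \leq 1$ gives the matching upper bound. Your side remark that finite-dimensionality is not needed is also accurate.
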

Here, $\| \cdot \|$ and $\| \cdot \|^*$ denote the norm of $Z$ and the dual norm, respectively.

Below, we use the notation $\one$ for the vector $(1, \ldots, 1)$.

\begin{lemma}\label{l:near 1}
	Suppose $n \geq 2$, $1.1 \geq C > \lambda_1 > \ldots > \lambda_n > 1$, and the numbers $\zeta_{ij}$ ($1 \leq i < j \leq n$) satisfy $0 \leq \zeta_{ij} \leq 9^{4-3j}$. 
	For $1 \leq i \leq n$, let $z_i^* = (0, \ldots, 0, \lambda_i, \zeta_{i,i+1}, \ldots, \zeta_{in})$ (with $i-1$ zeros). 
	Let $\trm$ be the $n \times n$ matrix with rows $z_1^*, \ldots, z_n^*$ -- that is,
	$$
	\trm = \begin{pmatrix}  \lambda_1  &  \zeta_{12}  &  \zeta_{13}  &  \ldots  &  \zeta_{1n}  \\
		0  &   \lambda_2  &  \zeta_{23}  &  \ldots  &  \zeta_{2n}  \\
		\dots  &  \dots  &  \dots  &  \ldots  &  \dots  \\
		0  &   0  &  0  &  \ldots  &  \lambda_n
	\end{pmatrix}
	$$
	Let $z_0 = (z_{0k})_{k=1}^n$ be the unique solution of the equation $\mathcal T z = \one$. Then $\frac45 \leq z_{0k} \leq 1$ for any $k$.
	
	Suppose, furthermore, that $Z$ is the space $\R^n$, equipped with the norm $\| \cdot \|$, so that $\|z_0\| = z_i^*(z_0)$, and $\|z_i^*\|^* \leq 1$, for $1 \leq i \leq n$. 
	Then, if $z^* = (\beta_1, \ldots, \beta_n)$ satisfies $\vee_k |1 - \beta_k| \leq 1/5$, then $\|z^*\|^* = z^*(z_0)$.
\end{lemma}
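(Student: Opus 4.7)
The statement splits into two assertions: the bound $4/5 \leq z_{0k} \leq 1$ on the solution of $\trm z_0 = \one$, and the dual-norm equality for $z^*$ with coordinates near $1$. I will handle them in that order.

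Since $\trm$ is upper triangular with diagonal entries in $(1, 1.1)$, it is invertible and the unique solution of $\trm z_0 = \one$ can be computed by back-substitution: $z_{0n} = 1/\lambda_n$, and in general
$$ z_{0k} = \frac{1}{\lambda_k}\Bigl( 1 - \sum_{j > k} \zeta_{kj} z_{0j} \Bigr) . $$
I will argue by downward induction on $k$. The upper bound $z_{0k} \leq 1$ is immediate from the nonnegativity of the correction term together with $\lambda_k > 1$. For the lower bound, once $z_{0j} \leq 1$ for $j > k$, the hypothesis $\zeta_{kj} \leq 9^{4-3j}$ shows that the total correction is dominated by the geometric sum $\sum_{j \geq 2} 9^{4-3j} < 1/80$, which together with $\lambda_k \leq 1.1$ gives $z_{0k} \geq (79/80)/1.1 > 4/5$.

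For the second assertion, the strategy is to express $z^* = \sum_k \alpha_k z_k^*$ with $\alpha_k \geq 0$, and then invoke Lemma \ref{l:fin_dim} directly — its other hypotheses are built into the ``furthermore'' assumption of the present lemma (note that $z_i^*(z_0) = (\trm z_0)_i = 1$). Solving $\sum_i \alpha_i z_i^* = z^*$ by forward substitution yields $\alpha_1 = \beta_1/\lambda_1$ and
$$ \alpha_k = \frac{1}{\lambda_k}\Bigl( \beta_k - \sum_{i < k} \alpha_i \zeta_{ik} \Bigr) . $$
The crux is to show $\alpha_k \geq 0$, which I will do by induction while simultaneously maintaining $\alpha_k \leq 6/5$. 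The upper bound is automatic: once the previous $\alpha_i$ are nonnegative, the recurrence gives $\alpha_k \leq \beta_k/\lambda_k \leq 6/5$. For the lower bound, plugging $\alpha_i \leq 6/5$ into the sum yields $\sum_{i<k} \alpha_i \zeta_{ik} \leq (6/5)(k-1) 9^{4-3k}$, whose maximum over $k \geq 2$ is attained at $k = 2$ and is bounded by $2/135$; hence
$$ \alpha_k \geq \frac{4/5 - 2/135}{1.1} > 0 . $$

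With $\alpha_k \geq 0$ in hand, Lemma \ref{l:fin_dim} gives $\|z^*\|^* = \sum_k \alpha_k = \sum_k \alpha_k z_k^*(z_0) = z^*(z_0)$, as required. The only point that requires real care — and the one I expect to be the main obstacle — is verifying that the envelope $[0, 6/5]$ is preserved by the forward recurrence; this is precisely where the rapid decay encoded in $\zeta_{ij} \leq 9^{4-3j}$ is needed to dominate the slack $|1 - \beta_k| \leq 1/5$ and absorb the factor $\lambda_k \leq 1.1$. The specific numerical constants in the hypothesis are tuned to provide exactly this cushion.
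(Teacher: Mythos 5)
Your proof is correct and follows essentially the same route as the paper's: back-substitution plus the geometric bound $\sum_{j\ge 2}9^{4-3j}$ for the estimate $\frac45\le z_{0k}\le 1$, and forward substitution in $\trm^T\alpha=z^*$ to get nonnegative coefficients feeding into Lemma \ref{l:fin_dim}. The only difference is that you spell out the inductive envelope $[0,6/5]$ for the $\alpha_k$, which the paper leaves as "easy to see."
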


Note that, in particular, we conclude that $\|z_k^*\|^* = 1$, for every $k$.

\begin{proof}
	The uniqueness of the solution $z_0$ follows from the fact that $A$ is non-singular. Using Gauss Elimination to find $z_0 = (z_{0k})_{k=1}^n$, we obtain
	\begin{equation}
		z_{0n} = \frac1{\lambda_n}, {\textrm{   and   }}
		z_{0k} = \frac1{\lambda_k} \big( 1 - \sum_{j=k+1}^n z_{0j} \zeta_{kj} \big) 
		{\textrm{   for   }}  1 \leq k \leq n-1.
		\label{eq:solve_for_z_n}
	\end{equation}
Clearly $z_{0k} < 1$ for any $k$. The lower bound for $z_{0k}$ follows from
$$
z_{0k} = \frac1{\lambda_k} \big( 1 - \sum_{j=k+1}^n z_{0j} \zeta_{kj} \big) \geq
\frac1{1.1} \big(1 - \sum_{j=k+1}^n 9^{4-3j} \big) > \frac1{1.1} \big(1 - \frac1{8 \cdot 9} \big) > \frac45 .
$$

	To address the ``furthermore' part of the lemma, note that $z_i^*(z_0) = 1$ for any $i$, hence $\|z_0\| = 1$. To apply Lemma \ref{l:fin_dim}, we shall show 
	that any $z^* \in U = \big[ \frac45,\frac65 \big]^n$ has a (necessarily unique) representation $z^* = \sum_i \alpha_i z_i^*$, with $\alpha_i \geq 0$.
	
	The vector $\alpha = (\alpha_i)$ is the solution to the equation $\mathcal T^T \alpha = z^*$ (here, $\mathcal T^T$ is the transpose of $\mathcal T$; note that the columns of $\mathcal T^T$ are $z_1^*, \ldots, z_n^*$).
	Write $z^* = (\beta_1, \ldots, \beta_n)$, then $\alpha_1 = \beta_1/\lambda_1$, and, for $2 \leq k \leq n$,
	$$
	\alpha_k = \frac1{\lambda_k} \big( \beta_k - \sum_{i=1}^{k-1} \alpha_i \zeta_{ik} \big) .
	$$
	It is easy to see that, if $|1 - \beta_k| < 1/5$ for any $k$, then $0 \leq \alpha_i < 2$ for any $i$.
	To complete the proof, apply Lemma \ref{l:fin_dim}.
\end{proof}

Our next goal is to determine the norm of certain linear combinations $\sum_{k=0}^n \beta_k \delta_{t_{i+k}^{\gamma_k}}$ ($t = \big(t_i^{\gamma_0}, \ldots, t_{i+n}^{\gamma_n}\big) \in N$). To this end, we introduce some notation. 

For $s = (s_0, \ldots, s_n) \in X^{n+1}$ ($n \geq 0$), denote by $I_s$ the ideal in $C_0(X)$, consisting of all $x \in C_0(X)$ which vanish on $s$.
Consider the $(n+1)$-dimensional space $Z_s = Z = C_0(X)/I_s$ (with the quotient norm). The natural quotient map $Q = Q_s : C_0(X) \to Z$ takes $x$ to $\big( x(s_k) \big)_{k=0}^n$. We shall therefore identify elements of $Z$ with sequences $z = (z_0, \ldots, z_n) \in \R^{n+1}$. We equip $Z$ with the quotient norm of $\tri{ \cdot }$, which we denote by $\| \cdot \|$. The dual norm shall be denoted by $\| \cdot \|^*$.

Fix $t = \big(t_i^{\gamma_0}, \ldots, t_{i+n}^{\gamma_n}\big) \in N$. Let $Z = Z_t$, and
\begin{equation}
z_k^* = \Big( 0, \ldots, 0, \lambda_{i+k} , \frac1{b(t|_{k}^{k+1})}, \ldots, \frac1{b(t|_{k}^n)} \Big) \, \, \, (0 \leq k \leq n) .
\label{eq:z-k-*}
\end{equation}

 Given $t,s\in N^{n+1}$, we say that $t$ and $s$ are \textit{almost equivalent} if $[t|_0^{n-1}] = [s|_0^{n-1}]$ and $[t|_{1}^n]=[s|_1^n]$.\\
 
 Suppose now that $t'= (t_p^{\kappa_0},...t_{p+q}^{\kappa_q}) \in N$ with $i\leq p<i+n$ and $1\leq q \leq i+n -p$.  We say that $t'$ is \textit{(p,q)-exceptional} (with respect to $t$) if 	\[ b(t_p^{\kappa_0},..., t_{p+q}^{\kappa_q} ) < b(t_p^{\gamma_{p-i}},...,t_{p+q}^{\gamma_{p-i+q}}) . \]
 	
Given $\varepsilon > 0$, we say that $t'$ is \textit{$(p,q, \varepsilon)$-exceptional} (with respect to $t$) if:
\begin{itemize}
\item 
$b(t_p^{\kappa_0},..., t_{p+q}^{\kappa_q} ) < b(t_p^{\gamma_{p-i}},...,t_{p+q}^{\gamma_{p-i+q}}) $ if either $i < p < p+q \leq i+n$, or $i \leq p < p+q < i+n$.
\item 
If $p=i$ and $q=n$, then $b(t_p^{\kappa_0},..., t_{p+q}^{\kappa_q}) <  \big( 1+ \varepsilon \big)^{-1}L^{c(t)}$.
\end{itemize} 

\begin{lemma}\label{l:pq-exceptional}
	 Let  $\varepsilon > 0$, fix $t = \big(t_i^{\gamma_0}, \ldots, t_{i+n-1}^{\gamma_{n-1}}, t_{i+n}^{\gamma_n} \big)\in N$, and suppose $s = (s_0,...,s_n)\in N$ be almost equivalent to $t$, while there is no $\gamma$ so that $\big[ \big(t_i^{\gamma_0}, \ldots, t_{i+n-1}^{\gamma_{n-1}}, t_{i+n}^\gamma \big) \big] = [s]$.  Then the following hold:

	   \begin{enumerate}
	   	\item There exist disjoint open sets $U_0,..., U_n$ with $t_{i+k}^{\gamma_k} \in U_k$ for $0\leq k\leq n$ so that for any $p,q$, and any $(p,q)$-exceptional $t' = (t_p^{\kappa_0},...t_{p+q}^{\kappa_q}) \in N$, there exists $j\in [0,q]$ such that $t_{p+j}^{\kappa_j} \notin U_{p+j-i}$.
	   	
	   	\item There exist disjoint open sets $U_0,..., U_n$ with $s_k \in U_k$ for $0\leq k\leq n$ so that for any $(p,q,\varepsilon)$-exceptional $t'\in N$,	then there exists $j \in [0,q] $ such that $t_{p+j}^{\kappa_j} \notin U_{p+j-i}$.
	   \end{enumerate} 
\end{lemma}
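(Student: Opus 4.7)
The plan exploits the discreteness built into Lemma \ref{l:bmap}(\ref{value of b}): for each fixed index pattern, the attained $b$-values form a strictly increasing sequence with supremum $L^{c(\cdot)}$. Combined with Lemma \ref{l:equivalences} (orbit closures form a partition of $X^{q+1}$) and the fact that $b$ distinguishes orbit closures (as is evident from the proof of Lemma \ref{l:bmap}), each case reduces to separating a specified tuple from finitely many pairwise disjoint closed orbit closures.

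For Part (1), fix admissible $(p,q)$. A $(p,q)$-exceptional $t'$ has $b(t') < b(t|_p^{p+q})$; by Lemma \ref{l:bmap}(\ref{value of b}), only finitely many orbit closures $[t']$ arise this way, and each is disjoint from $[t|_p^{p+q}]$, so $d_{q+1}(t|_p^{p+q}, [t']) > 0$. Taking the minimum of these positive distances over the finitely many admissible $(p,q)$, and shrinking further so that the open balls $B(t_{i+k}^{\gamma_k}, \delta)$ remain pairwise disjoint, yields a single $\delta > 0$; set $U_k := B(t_{i+k}^{\gamma_k}, \delta)$. If some $(p,q)$-exceptional $t'$ satisfied $t_{p+j}^{\kappa_j} \in U_{p+j-i}$ for every $j$, then $d_{q+1}(t', t|_p^{p+q}) < \delta$, contradicting $t' \in [t']$.

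For Part (2), we run the same scheme around the $s_k$'s. The crucial additional input is that almost equivalence forces $[s|_a^b] = [t|_a^b]$ for every $(a,b) \neq (0,n)$: each such sub-tuple lies inside either $s|_0^{n-1}$ or $s|_1^n$, whose orbit closure matches the corresponding segment of $t$, and orbit-closure equality is inherited by coordinate sub-tuples (by continuity of the action and Lemma \ref{l:equivalences}). Thus $b(s|_{p-i}^{p+q-i}) = b(t|_p^{p+q})$ whenever $(p,q) \neq (i,n)$, so the Part (1) argument applies verbatim for those pairs. For the remaining case $(p,q) = (i,n)$, the threshold $(1+\varepsilon)^{-1}L^{c(t)}$ leaves only a finite initial segment of the strictly increasing sequence of comparable $b$-values; the main subtlety is that we must ensure $s$ itself is not $(i,n,\varepsilon)$-exceptional, equivalently $b(s) \geq (1+\varepsilon)^{-1}L^{c(t)}$, a condition presumably supplied by the intended context. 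Once this is in place, each exceptional orbit closure differs from $[s]$ and a single $\delta$-separation around the $s_k$'s completes the argument.
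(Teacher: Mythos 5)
Your Part (1), and the $(p,q)\neq(i,n)$ cases of Part (2), follow essentially the paper's own route: by Lemma \ref{l:bmap}(\ref{value of b}) only finitely many orbit closures can be exceptional for each admissible $(p,q)$; each such closure is closed and (since $b$ separates orbit closures and orbit closures are pairwise equal or disjoint by Lemma \ref{l:equivalences}) disjoint from the reference orbit closure, hence at positive distance from the reference tuple; intersecting these finitely many separations and shrinking gives the disjoint $U_k$. Your remark that almost equivalence is inherited by all proper sub-tuples, so that $[s|_{p-i}^{p-i+q}]=[t|_{p-i}^{p-i+q}]$ whenever $(p,q)\neq(i,n)$, is exactly the point the paper uses implicitly to transfer the Part (1) separation to the neighborhoods of the $s_k$.

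The one place you stop short is the $(p,q)=(i,n)$ case of Part (2), where you write that the needed fact is ``presumably supplied by the intended context.'' It is not context: it is the remaining hypothesis of the lemma, which your write-up never uses, namely that there is no $\gamma$ with $\big[\big(t_i^{\gamma_0},\ldots,t_{i+n-1}^{\gamma_{n-1}},t_{i+n}^{\gamma}\big)\big]=[s]$. That hypothesis is precisely what the paper invokes to conclude that the finitely many $(i,n,\varepsilon)$-exceptional orbit closures are all disjoint from $[s]$, after which $s$ has positive distance from their closed finite union and the Part (1) scheme applies. Note also that your reformulation ``$b(s)\geq(1+\varepsilon)^{-1}L^{c(t)}$'' is not quite the condition the argument needs: what is required is that $s\notin[t']$ for every $(i,n,\varepsilon)$-exceptional $t'\in N$, i.e., that $[s]$ is none of those finitely many closures; phrasing it through $b(s)$ both presupposes $s\in N$ and hides the role of the displayed hypothesis. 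So the skeleton is correct and matches the paper, but you must close the $(i,n)$ case by appealing to that last hypothesis rather than deferring it.
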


\begin{proof}
	Start with statement (1): Note that $b$ is invariant under the action of $G$. Due to the definition of $b$, if $i \leq p < i+n$ and $1 \leq q \leq i+n-p$, then there exist only finitely many orbit equivalence classes $[t']$, with $t' = \big(t_p^{ \kappa_0}, \ldots, t_{p+q}^{ \kappa_q} \big) \in N$,  $t'$ is $(p,q)$-exceptional with respect to $t$. \\
	
	The union of $(p,q)$-exceptional orbits $[t']$ is closed and nowhere dense in $X^{q+1}$, and disjoint from $[t|^{p-i}_{p-i+q}]$, hence we can find an open set which contains $t|^{p-i}_{p-i+q}$, but does not meet any $(p,q)$-exceptional orbits.
	Consequently, we can select open sets $U_0, \ldots, U_n$ so that, for any relevant $p$ and $q$,
	$U_p \times \ldots \times U_{p+q}$ does not meet any $(p,q)$-exceptional orbits. In terms of $b$, we have that, if
	$$
	b \big(t_p^{ \kappa_0}, \ldots, t_{p+q}^{\kappa_q} \big) < b \big( t_p^{\gamma_{p-i}}, \ldots, t_{p+q}^{\gamma_{p-i+q}} \big) ,
	$$
	then there exists $j \in [0,q]$ so that $t_{p+j}^{\kappa_j} \notin U_{p+j}$. 
	
	For statement (2), there likewise are only finitely many $(p,q,\varepsilon)$-exceptional orbits, and given that no $\gamma$ satisfies $\big[\big(t_i^{\gamma_0}, \ldots, t_{i+n-1}^{\gamma_{n-1}}, t_{i+n}^\gamma \big)\big] = [s]$, these orbits are also disjoint from $[s]$. From there, apply the same argument as in case 1.
\end{proof}

\begin{lemma}\label{l:norms_on_quotient}
	In the above notation, suppose $z = (u_k)_{k=0}^n \in Z$ is such that $\frac45 \leq u_k \leq 1$, for any $k$. Then $\|z\| = \max_{0 \leq k \leq n} z_k^*(z)$, with $z_k^*$ as in \eqref{eq:z-k-*}.
\end{lemma}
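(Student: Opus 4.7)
The plan is to prove both inequalities $\|z\| \geq \max_k z_k^*(z)$ and $\|z\| \leq \max_k z_k^*(z)$. The lower bound is immediate: for any representative $x \in C_0(X)$ of $z$ (so that $x(t_{i+j}^{\gamma_j}) = u_j \geq 0$), one has $\rho_{t|_k^n}(x) = \lambda_{i+k} u_k + \sum_{j=k+1}^n u_j/b(t|_k^j) = z_k^*(z)$. Since $\tri{x} \geq \rho_{t|_k^n}(x)$ by definition \eqref{eq:define_new_norm}, taking the infimum over $x$ yields $\|z\| \geq z_k^*(z)$ for each $k$, whence $\|z\| \geq \max_k z_k^*(z)$.

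For the upper bound, set $\zeta := \max_k z_k^*(z)$. Given $\vr > 0$, I construct $x \in C_0(X)$ with $Q(x) = z$ and $\tri{x} \leq \zeta + \vr$. First choose $M > i+n$ so large that $\lambda_M + L^{1-3M}/(1-L^{-3}) \leq 1 + \vr/(2C)$. Apply Lemma \ref{l:pq-exceptional}(1) to produce disjoint open sets $U_0, \ldots, U_n$ with $t_{i+k}^{\gamma_k} \in U_k$, shrinking each $U_k$ (using the pairwise disjoint orbit closures from Lemma \ref{l:dense}) to also ensure that $U_k \cap \overline{[t_j^0]} = \emptyset$ for every $j \in \{1, \ldots, M\} \setminus \{i+k\}$. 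Use Urysohn's Lemma to build nonnegative $x_k \in C_0(X)$ with $\supp x_k \subset U_k$, $x_k(t_{i+k}^{\gamma_k}) = u_k$, and $\|x_k\|_\infty = u_k$; set $x := \sum_{k=0}^n x_k$. Then $Q(x) = z$, $x \geq 0$, and $\|x\|_\infty = \max_k u_k \leq \zeta$, since $\zeta \geq \lambda_{i+k} u_k \geq u_k$ for each $k$.

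The heart of the proof is to verify $\rho_s(x) \leq \zeta + \vr$ for every $s = (s_p^{\kappa_0}, \ldots, s_{p+q}^{\kappa_q}) \in N$, by casework on $p$. If $p > M$, Lemma \ref{l:bmap}(5) and the choice of $M$ give $\rho_s(x) \leq \|x\|_\infty \bigl(\lambda_M + L^{1-3M}/(1-L^{-3})\bigr) \leq \zeta(1 + \vr/(2C)) \leq \zeta + \vr/2$. If $p \leq M$, the disjointness built into the $U_k$'s forces any nonzero $x(s_{p+r}^{\kappa_r})$ to satisfy $p+r \in [i,i+n]$ and $s_{p+r}^{\kappa_r} \in U_{p+r-i}$; hence $\rho_s(x) = 0$ when $[p,p+q] \cap [i,i+n] = \emptyset$, and for $p < i$ only indices $r \geq i-p \geq 1$ contribute, yielding a small geometric sum bounded by $L^{1-3i}/(1-L^{-3})$ via Lemma \ref{l:bmap}(5). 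The main subcase is $i \leq p \leq i+n$: let $k_0 := p-i$ and $R := \{r \in [0, \min(q, n-k_0)] : s_{p+r}^{\kappa_r} \in U_{k_0+r}\}$. When $R$ is an initial segment $[0, r^*]$, Lemma \ref{l:pq-exceptional}(1) applied to each sub-tuple $s|^r$ (for $r \leq r^*$) yields $b(s|^r) \geq b(t|_{k_0}^{k_0+r})$, so that $\rho_s(x) \leq \lambda_{i+k_0} u_{k_0} + \sum_{r=1}^{r^*} u_{k_0+r}/b(t|_{k_0}^{k_0+r}) \leq z_{k_0}^*(z) \leq \zeta$.

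The main obstacle is the residual case where $R$ is not an initial segment of $[0, n-k_0]$: here Lemma \ref{l:pq-exceptional}(1) no longer directly controls $b(s|^r)$ for $r \in R$ whose predecessors lie outside the $U_k$'s. My plan is to decompose $R$ into its initial run $[0, r^*]$ (contributing at most $z_{k_0}^*(z) \leq \zeta$ as above) and the subsequent ``stray'' positions; for the latter, Lemma \ref{l:bmap}(6) gives $b(s|^r) \geq L^{r-r^*} b(s|^{r^*}) \geq L^{r-r^*} b(t|_{k_0}^{k_0+r^*})$, so the stray contributions form a geometrically decaying tail to be absorbed into the remaining $\vr/2$ budget by choosing $M$ large enough in advance. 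Letting $\vr \to 0$ then gives $\|z\| \leq \zeta$.
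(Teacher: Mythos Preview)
Your overall strategy matches the paper's: the lower bound is immediate, and for the upper bound you build $x$ supported in carefully chosen neighbourhoods $U_k$ satisfying the two restrictions [R1] and [R2] (the paper's terminology), then estimate $\rho_s(x)$ case by case. The cases $p>M$, $p<i$, and $p\in\{i+n+1,\ldots,M\}$ are handled correctly, and your use of the contrapositive of Lemma~\ref{l:pq-exceptional}(1) to get $b(s|^r)\ge b(t|_{k_0}^{k_0+r})$ on an initial run is exactly right.

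The gap is in the last paragraph. When $R$ fails to be an initial segment, you bound the stray contribution by a geometric tail dominated by $\frac{1}{(L-1)\,b(t|_{k_0}^{k_0+r^*})}$ and then claim this can be ``absorbed into the remaining $\vr/2$ budget by choosing $M$ large enough in advance.'' But this quantity depends only on $L$ and on the fixed tuple $t$; it has nothing to do with $M$, so no choice of $M$ makes it smaller than an arbitrary $\vr/2$. (Also, when $r^*=0$ the expression $b(s|^{r^*})$ is not defined, so your chain of inequalities breaks there.) The paper closes this case differently: it lets $j$ be an index with $s_{p+j}^{\kappa_j}\notin U_{k_0+j}$ (produced by [R2] at the \emph{first} $r$ where $b(s|^r)<b(t|_{k_0}^{k_0+r})$), uses that the $j$-th term of $(*)$ vanishes, and compares the tail $\sum_{r>j}u_{k_0+r}/b(s|^r)$ not to $\vr$ but to the single dropped term $u_{k_0+j}/b(t|_{k_0}^{k_0+j})$ present in $z_{k_0}^*(z)$. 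Since $b(s|^r)\ge L^{r-j}\,b(t|_{k_0}^{k_0+j})$ (via Property~\ref{value of b} of Lemma~\ref{l:bmap}, as $s|^r$ and $t|_{k_0}^{k_0+r}$ share the same $c$-value), the tail is at most $\frac{1}{L-1}\cdot\frac{1}{b(t|_{k_0}^{k_0+j})}<\frac{4/5}{b(t|_{k_0}^{k_0+j})}\le\frac{u_{k_0+j}}{b(t|_{k_0}^{k_0+j})}$, giving $(*)\le z_{k_0}^*(z)\le\zeta$ directly, with no $\vr$ needed. Your decomposition via $r^*$ can be repaired by the same idea: the stray tail must be compared to the slack $z_{k_0}^*(z)-\big(\text{initial run}\big)$, not to $\vr$.
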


\begin{proof}
	One direction is straightforward: for any $z \in Z$ we have $\|z\| = \inf \{ \tri{x} : Qx = z \}$. By \eqref{eq:define_new_norm}, such an $x$ satisfies $\tri{x} \geq z_k^*(|z|)$, for any $k$; so, $\|z\| \geq \max_{0 \leq k \leq n} z_k^*(z)$.
	
	For the converse inequality, fix $\varepsilon > 0$. Fix $z = (u_0, \ldots, u_n)$, where
	$$\frac45 \leq \wedge_k u_k \leq \vee_k u_k = 1 .$$
	Let $A = \max_{0 \leq k \leq n} z_k^*(z)$ (clearly $1 \leq A \leq C$). 
	Our goal is to find $x \in C_0(X)$ so that $x(t_{i+k}^{\gamma_k}) = u_k$ for $1 \leq k \leq n$, and $\triple{x} \leq (1 + \varepsilon)A$.
	To achieve this, we are going to find ``sufficiently small'' disjoint open sets $U_k \ni t_{i+k}^{\gamma_k}$ ($0 \leq k \leq n$). Then we find a function $x$ so that
	$u_k = x(t_{i+k}^{\gamma_k}) \geq x(t) \geq 0$ for $t \in U_k$ ($0 \leq k \leq n$), and $x(t) = 0$ if $t \notin \cup_k U_k$. Our goal is to show that, for any $t' = \big( t_p^{\kappa_0}, \ldots, t_{p+q}^{\kappa_q} \big) \in N$, we have
	\begin{equation}
		\lambda_p x( t_p^{\kappa_0} ) + \sum_{r=1}^q \frac{x( t_{p+r}^{\kappa_r} )}{b ( t'|^r) } \leq (1 + \varepsilon) A .
		\label{eq:desired_inequality}
	\end{equation}
	
	We impose two restrictions on the sets $U_k$.
	
	\smallskip

	[R1] $\triangleright$
	Find $M > i+n$ so large that 
	$	\lambda_M + L^{3-M} < \min \{ \lambda_{i+n+1} , 1 + \vr \}$.
	Make sure that, for each $k$, $U_k \cap \big( \cup [t_j^0] : j \in [1,M] \backslash \{i+k\} \big) = \emptyset$. 
	
	\smallskip

	[R2] $\triangleright$ The sets $U_k$ satisfy Statement 1 of Lemma \ref{l:pq-exceptional}: if $t'$ is $(p,q)$-exceptional with respect to $t$, ten there exists $j\in [0,q]$ such that $t_{p+j}^{\kappa_j} \notin U_{p+j-i}$.

	\smallskip
	
	We shall establish \eqref{eq:desired_inequality} by considering several cases separately: (i) $p > M$; (ii) $p \in \{1, \ldots, i-1, i+n+1, \ldots, M\}$; (iii) $p= i+n$; (iv) $i \leq p \leq i+n$.
	The latter case will be the most difficult.
	
	For our computations, recall that
	$$
	\frac45 \leq \vee_{0 \leq k \leq n} u_k < \vee_{0 \leq k \leq n} \lambda_{i+k} u_k \leq A < C .
	$$
	Also, due to the restriction [R1], we have:
 \begin{equation}
     \label{eq:u-p as bound}
     x(t_{i+p}^{\kappa}) \leq u_p \, \, {\textrm{for any }} \, p \in \{0,1,\ldots,n\} \, \, {\textrm{and any }} \, \kappa .
 \end{equation}
	\begin{equation}
     \label{eq:x equals 0}
     x(t_{r}^{\kappa}) = 0 \, \, {\textrm{for any }} \, r \in \{1,\ldots,M\} \backslash \{i, \ldots, i+n\} \, \, {\textrm{and any }} \, \kappa .
 \end{equation}
 
	(i) $p > M$. Note first that, in this situation, $\lambda_p x( t_p^{\kappa_0} ) \leq A$.
	Indeed, if $t_p^{\kappa_0} \notin \cup_{k=0}^n U_k$, then $x( t_p^{\kappa_0} ) = 0$.
	If, on the other hand, $t_p^{\kappa_0} \in U_k$ for some $k$, then $x( t_p^{\kappa_0} ) \leq u_k$. 
	As the sequence $(\lambda_j)$ is decreasing, we have $\lambda_p \leq \lambda_{i+k}$, hence
	$\lambda_p x( t_p^{\kappa_0} ) \leq \vee_k \lambda_{i+k} u_k \leq A$. 
	Recall also that $x(t_{p+r}^{\kappa_r}) \leq 1$, and that $b ( t'|^r) > L^{p+r}$.	Therefore, the left hand side of \eqref{eq:desired_inequality} does not exceed  	$A + \sum_{s=M+1}^\infty L^{-s} < A + \varepsilon$,	as desired.
	
	(ii) If $p \in \{1, \ldots, i-1, i+n+1, \ldots, M\}$, the term $\lambda_p x( t_p^{\kappa_0} )$ in \eqref{eq:desired_inequality} vanishes by \eqref{eq:x equals 0}, hence the left hand side of that inequality does not exceed $\sum_{s=1}^\infty L^{-s} < 1$.
	
	(iii) If $p = i+n$, the only non-zero terms in the left hand side of \eqref{eq:desired_inequality} arise when $r = i+n$, or $r > M$. By \eqref{eq:u-p as bound}, this left hand side does not exceed
	$$
	\lambda_{n+i} u_n + \sum_{s=M+1}^\infty L^{-s} < A + \varepsilon \leq (1 + \varepsilon) A .
	$$
	
	(iv) The case of $p \in \{i, \ldots, i+n-1\}$ is the most intricate. Fix $p$, and write the left hand side of  \eqref{eq:desired_inequality} as
	$$
	\lambda_p x( t_p^{\kappa_0} ) + 
	\sum_{r=1}^{i+n-p} \frac{x( t_{p+r}^{\kappa_r} )}{b ( t'|^r) }  + \sum_{r=M+1-p}^q \frac{x( t_{p+r}^{\kappa_r} )}{b ( t'|^r) } .
	$$

	We have
	$$
	\sum_{r=M+1-p}^q \frac{x( t_{p+r}^{\kappa_r} )}{b ( t'|^r) } < \sum_{s=M+1 }^\infty L^{-s} < \varepsilon .
	$$
	It remains to show that
	\begin{equation}
		(*) := \lambda_p x( t_p^{\kappa_0} ) + 
		\sum_{r=1}^{i+n-p} \frac{x( t_{p+r}^{\kappa_r} )}{b ( t'|^r) } \leq A. 
		\label{eq:difficult_sum}
	\end{equation}
	Now note that if 
	\begin{equation}
		b \big( t'|^r\big) \geq b \big( t|_{p-i}^{p-i+r} \big) 
		\label{eq:ineq_for_b}
	\end{equation}
	for every $r \in [0,i+n-p]$, then
	$$
	(*) \leq \lambda_p u_{p-i} + 
	\sum_{r=1}^{i+n-p} \frac{u_{p-i+r}}{b ( t|_{p-i}^{p-i+r} )} \leq A . 
	$$
	Now suppose \eqref{eq:ineq_for_b} fails for some $r$ as above. 
	Find the smallest $r$ for which this happens. By the restriction [R2], there exists $j \in [0,r]$ so that $t_{p+j}^{\kappa_{p-i+j}} \notin U_{p-i+j}$. Consequently, $x(t_{p+j}^{\kappa_{p-i+j}}) = 0$.
	We shall establish \eqref{eq:difficult_sum} for $j > 0$; the case of $j = 0$ is treated similarly.
	In light of $x(t_{p+j}^{\kappa_{p-i+j}}) = 0$, and $x( t_{p+1}^{\kappa_r} ) \leq u_{p-i+r}$, we have
	$$
	(*) \leq \lambda_p u_{p-i} + 
	\sum_{r=1}^{j-1} \frac{u_{p-i+r}}{b ( t'|^r) } + 
	\sum_{r=j+1}^{i+n-p} \frac{u_{p-i+r}}{b ( t'|^r) } ,
	$$
	and therefore,
	$$
	A - (*) \geq \frac{u_{p-i+j}}{b ( t|_{p-i}^{p-i+j}) } - 
	\sum_{r=j+1}^{i+n-p} \frac{u_{p-i+r}}{b ( t'|^r) } .
	$$
	Note that, for $r \geq j+1$, By Lemma \ref{l:bmap}(\ref{b encodes} ),
	$b ( t'|^r) \geq L^{r-j} b ( t|_{p-i}^{p-i+j})$.
	Also, $\frac45 \leq u_k \leq 1$, and therefore,
	$$
	A - (*) \geq \frac1{b ( t|_{p-i}^{p-i+j} ) } \Big( \frac45 - \sum_{r=j+1}^\infty L^{j-r} \Big) > 0 ,
	$$
	which gives us \eqref{eq:difficult_sum}.
\end{proof}

Here and below, we use the ``dot product'' notation: if $a = (a_0, a_1, \ldots, a_n), b = (b_0, b_1, \ldots, b_n) \in \R^{n+1}$, then $a \cdot b = \sum_{i=0}^n a_i b_i$.


\begin{corollary}\label{c:norms on the dual}
	For any $t = \big(t_i^{\gamma_0}, \ldots, t_{i+n}^{\gamma_n}\big) \in N$, there exists $a(t) \in \R^{n+1}_+$ 
	so that, for any $\beta = (\beta_0, \ldots, \beta_n) \in U = [4/5,1]^{n+1}$, we have
	$$
	\tri{ \sum_{k=0}^n \beta_k \delta_{t_{i+k}^{\gamma_k}} } = a(t) \cdot \beta .
	$$
	Moreover, for $t, s \in N$, $a(t) = a(s)$ iff $[t] = [s]$.
\end{corollary}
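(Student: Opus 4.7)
My plan is to set $a(t) := \mathcal T_t^{-1} \one$, where $\mathcal T_t$ is the upper triangular matrix whose rows are the functionals $z_k^*$ introduced in \eqref{eq:z-k-*}. To invoke Lemma \ref{l:near 1} with this choice, I first verify its hypotheses: the off-diagonal entries $\zeta_{kj} = 1/b(t|_k^j)$ decay at the required rate thanks to Lemma \ref{l:bmap}(\ref{est on b}) together with $L > 9$, while the diagonal consists of the decreasing values $\lambda_{i+k}$ living in $(1, C) \subset (1, 1.1]$. Lemma \ref{l:near 1} then supplies $a(t) \in [4/5, 1]^{n+1}$.

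Next I would check the hypotheses needed for the ``furthermore'' clause of Lemma \ref{l:near 1} inside the quotient $Z_t = C_0(X)/I_t$. The equality $z_k^*(a(t)) = 1$ is immediate from the definition of $a(t)$, while the bound $\|z_k^*\|^* \leq 1$ follows from the chain
\[
\bigl| z_k^*(Q_t x) \bigr| \leq \rho_{t|_k^n}(x) \leq \tri{x} ,
\]
valid for every $x \in C_0(X)$, passed to the infimum over representatives of a fixed class in $Z_t$. Lemma \ref{l:norms_on_quotient} gives $\|a(t)\| = 1$ in $Z_t$, so the ``furthermore'' clause applies and yields $\|z^*\|^* = z^* \cdot a(t)$ for every $z^* \in [4/5, 6/5]^{n+1}$. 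Identifying $Z_t^*$ with the annihilator of $I_t$ in $C_0(X)^*$ and restricting to $\beta \in [4/5, 1]^{n+1}$, this is precisely the desired identity $\tri{\sum_k \beta_k \delta_{t_{i+k}^{\gamma_k}}} = a(t) \cdot \beta$.

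For the ``moreover'' clause, the forward direction is an immediate consequence of Lemma \ref{l:limits of atoms}: if $[t] = [s]$, then the norms $\tri{\sum_k \beta_k \delta_{t_{i+k}^{\gamma_k}}}$ and $\tri{\sum_k \beta_k \delta_{s_{j+k}^{\kappa_k}}}$ coincide, so $a(t) \cdot \beta = a(s) \cdot \beta$ on the hypercube, forcing $a(t) = a(s)$. I expect the converse to be the main obstacle. Starting from $a(t) = a(s)$, the common dimension gives $|t| = |s| = n+1$, and $a(t)_n = 1/\lambda_{i+n}$, combined with the strict monotonicity of $(\lambda_p)$, pins down $i+n$ and hence $i$, so $t$ and $s$ are at least comparable. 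The remaining task is to reconstruct the values $b(t|_k^j)$ from $a(t)$: although each row equation $a(t)_k = \lambda_{i+k}^{-1}\bigl(1 - \sum_{j>k} a(t)_j / b(t|_k^j)\bigr)$ entangles several unknowns simultaneously, the geometric separation between admissible $b$-values supplied by Lemma \ref{l:bmap}(\ref{value of b}) and (\ref{b increases}) should isolate them one at a time by descending recursion on $j - k$. Having obtained $b(t|_k^j) = b(s|_k^j)$ for all $k < j$, the case $(k,j) = (0,n)$ combined with Lemma \ref{l:bmap}(\ref{b encodes}) delivers $[t] = [s]$.
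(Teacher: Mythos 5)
Your proof of the displayed norm formula is correct and is essentially the paper's own argument: define $a(t)$ as the solution of $\trm(t)\,a(t)=\one$, where $\trm(t)$ has rows $z_k^*$ as in \eqref{eq:z-k-*}; check $\|z_k^*\|^*\le 1$ (your chain $|z_k^*(Q_t x)|\le\rho_{t|_k^n}(x)\le\tri{x}$ is exactly the justification the paper dismisses with ``clearly''; for $k=n$ one should quote the ``furthermore'' part of Lemma \ref{l:simple_properties} rather than a seminorm $\rho$, since a singleton is not in $N$); then combine Lemma \ref{l:norms_on_quotient} with the ``furthermore'' clause of Lemma \ref{l:near 1}. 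The forward implication of the ``moreover'' clause is also fine, whether by your route through Lemma \ref{l:limits of atoms} or, more directly, by noting that $[t]=[s]$ forces $[t|_k^j]=[s|_k^j]$ for all $k<j$, hence $\trm(t)=\trm(s)$ by Lemma \ref{l:bmap}(\ref{b encodes}), hence $a(t)=a(s)$.

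The gap is exactly where you suspected it: the converse of the ``moreover'' clause. Your descending recursion does give $b(t|_{n-1}^n)=b(s|_{n-1}^n)$, but each earlier row yields only a \emph{single} scalar identity $\sum_{j>k} a_j\bigl(b(t|_k^j)^{-1}-b(s|_k^j)^{-1}\bigr)=0$ containing $n-k$ unknown differences, and the ``geometric separation'' you invoke does not isolate them. On a fixed comparability class the admissible values of $b(\cdot|_k^{k+1})$ are $L^{k_m^1}<L^{k_m^2}<\cdots$ with exponents $k_m^i\nearrow c_m$ (Lemma \ref{l:bmap}(\ref{value of b})), so two \emph{distinct} values can have reciprocals differing by far less than $L^{-c_m-2}$, which is the scale of the total contribution of the segments with $j>k+1$; Lemma \ref{l:bmap}(\ref{b increases}) bounds those $b$'s from below but says nothing that prevents their variation from cancelling the $j=k+1$ discrepancy. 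Ruling this out would require an extra arithmetic condition on the exponents $k_m^i$ (e.g.\ some independence) that Lemma \ref{l:bmap} does not supply. In fairness, the paper's own proof of this clause is the single sentence ``the properties of the function $b$ tell us that $a(t)\neq a(s)$ unless $[t]=[s]$''; the only instance actually used later (case (1) of Lemma \ref{l:preserve_orbits}) is the clean one in which $\trm(t)$ and $\trm(t')$ differ in the single entry $\zeta_{0n}$, where back-substitution gives $a_0(t)-a_0(t')=-\lambda_i^{-1}a_n\bigl(\zeta_{0n}(t)-\zeta_{0n}(t')\bigr)\neq 0$ at once. If you restrict the converse to pairs whose matrices differ in one entry, your argument closes; as stated in full generality, neither your sketch nor the paper's sentence proves it.
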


\begin{proof}
	Let $Z = Z_t$ be as in Lemma \ref{l:norms_on_quotient}. By duality between quotients and subspaces, $(Z^*, \| \cdot \|^*)$ can be identified with $\spn \big[\delta_{t_{i+k}^{\gamma_k}} : 0 \leq k \leq n\big] \subset \big( C_0(X), \tri{ \cdot } \big)^*$.
	Thus, it suffices to show that, for $\beta = (\beta_0, \ldots, \beta_n)$ as in the statement of the lemma, $\|\beta\|^* = a(t) \cdot \beta$.
	
	For $0 \leq k \leq n$, consider the $n+1$-length vectors
	$$
	z_k^* = \bigg( \underbrace{\mbox{0,...,0}}_{{\tiny k}} \ , \lambda_{i+k}, \frac1{b(t|_k^{k+1})} , \ldots, \frac1{b(t|_k^{n})} \bigg) .
	$$
	By Lemma \ref{l:norms_on_quotient}, $\|z\| = \max_k z_k^*(z)$ whenever $z = (u_0, \ldots, u_n)$ has positive entries, and $\frac{\min_k u_k}{\max_k u_k} \geq \frac45$.
	Moreover, clearly $\|z_k^*\|^* \leq 1$.
	Let $\trm = \trm(t)$ be the matrix whose rows are vectors $z_k^*$, and let $z_0 = a(t)$ be the solution to $\trm z = \one$. The solution $a(t)$ is obtained by Gaussian elimination (see \eqref{eq:solve_for_z_n}, within Lemma \ref{l:near 1}), so the properties of the function $b$ tell us that $a(t) \neq a(s)$ unless $[t] = [s]$.
	By Lemma \ref{l:near 1}, $\|\beta\|^* = a(t) \cdot \beta$ if $\beta \in U$.
	%
\end{proof}


Combining Lemma \ref{l:limits of atoms} with Corollary \ref{c:norms on the dual}, we obtain:

\begin{lemma}\label{l:invariance}
	Suppose $t = \big(t_i^{\gamma_0}, \ldots, t_{i+n}^{\gamma_n}\big) \in N$ and $s = (s_0, \ldots, s_n) \in \widetilde{N}$ satisfies $[s] = [t]$.
	Then, for any $\beta = (\beta_0, \ldots, \beta_n) \in [4/5,1]^{n+1}$, we have
	$$
	\tri{ \sum_{k=0}^n \beta_k \delta_{s_k} } = a(s) \cdot \beta ,
	$$
	where the vector $a(s)$ equals $a(t)$, defined in Corollary \ref{c:norms on the dual}.
\end{lemma}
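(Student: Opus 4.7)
The proof is essentially a two-line combination of already-proved facts, so my plan is just to identify and chain them carefully.

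First, I would invoke Lemma \ref{l:limits of atoms}: since $[s] = [t]$ by hypothesis, we have
\[
\triple{ \sum_{k=0}^n \beta_k \delta_{s_k} }^{*} = \triple{ \sum_{k=0}^n \beta_k \delta_{t_{i+k}^{\gamma_k}} }^{*}
\]
for any real coefficients $\beta_0, \ldots, \beta_n$, in particular for $\beta \in [4/5,1]^{n+1}$. This step requires no hypothesis on the signs or sizes of $\beta_k$; the $G$-invariance of $\triple{\cdot}$ and its dual, combined with local equicontinuity, already did the work inside Lemma \ref{l:limits of atoms}.

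Second, I would apply Corollary \ref{c:norms on the dual} to the right-hand side. Since $t \in N$ and $\beta \in [4/5,1]^{n+1}$, that corollary gives
\[
\triple{ \sum_{k=0}^n \beta_k \delta_{t_{i+k}^{\gamma_k}} }^{*} = a(t) \cdot \beta .
\]
Chaining the two displays, we conclude $\triple{\sum_{k=0}^n \beta_k \delta_{s_k}}^{*} = a(t) \cdot \beta$, so defining $a(s) := a(t)$ yields the claim.

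The only point that requires a word of justification is well-definedness: the vector $a(s)$ must depend only on $s$ (equivalently, on $[s]$), not on the particular $t \in N$ chosen to represent the orbit. This is exactly the ``moreover'' clause of Corollary \ref{c:norms on the dual}, which asserts $a(t) = a(t')$ whenever $[t]=[t']$ for $t,t' \in N$; combined with Lemma \ref{l:equivalences}, this makes the assignment $s \mapsto a(s)$ unambiguous on orbit closures. There is no substantive obstacle here — the heavy lifting (the combinatorial construction of $b$ and $c$, the Gaussian-elimination solution for $a(t)$, and the $G$-invariance transfer via weak$^*$ limits) has all been done upstream, and this lemma is simply the consolidated statement needed for the subsequent analysis of isometries on $(C_0(X), \triple{\cdot})$.
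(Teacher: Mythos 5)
Your proof is correct and matches the paper exactly: the paper derives this lemma with the single sentence ``Combining Lemma \ref{l:limits of atoms} with Corollary \ref{c:norms on the dual}, we obtain,'' which is precisely the two-step chain you wrote out. Your added remark on the well-definedness of $a(s)$ via the ``moreover'' clause of Corollary \ref{c:norms on the dual} is a sensible (and harmless) extra precaution.
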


%

\begin{lemma}\label{l:comparison}
	Suppose $t = \big(t_i^{\gamma_0}, \ldots, t_{i+n}^{\gamma_n}\big) \in N$ and 
	$s = (s_0, \ldots, s_n) \in N$ are almost equivalent, and there is no $\gamma$ so that $\big[\big(t_i^{\gamma_0}, \ldots, t_{i+n-1}^{\gamma_{n-1}}, t_{i+n}^\gamma \big)\big] = \big[s\big]$.  If $\beta_0, \ldots, \beta_n \in \big[ \frac45, 1\big]$, then
	$$ \tri{ \sum_{k=0}^n \beta_k \delta_{s_k^{\gamma_k}} } = a(s) \cdot \beta , $$
	where $a(s)$ is the solution to the equation $\trm(s) a(s) = \one$, and
	$$
	\trm(s) = \begin{pmatrix}  \lambda_i  &  \zeta_{01}  &  \zeta_{02}  &  \ldots  &  \zeta_{0n}  \\
		0  &   \lambda_{i+1}  &  \zeta_{12}  &  \ldots  &  \zeta_{1n}  \\
		\dots  &  \dots  &  \dots  &  \ldots  &  \dots  \\
		0  &   0  &  0  &  \ldots  &  \lambda_{i+n}
	\end{pmatrix} .
	$$
	Here:
	\begin{itemize}
		\item $\zeta_{0k} = 1/b \big( s|_0^k)$ for $1 \leq k \leq n-1$.
		\item $\zeta_{jk} = 1/b \big( s|_j^k\big)$ for $j+1 \leq k \leq n$.
		\item 
		$\zeta_{0n} = L^{-c(t)}$. 
	\end{itemize}
\end{lemma}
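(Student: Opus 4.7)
The strategy mirrors that of Lemma~\ref{l:norms_on_quotient} combined with Corollary~\ref{c:norms on the dual}, with $s$ replacing $t$ throughout and with the single modification that the $(0,n)$-entry of the generating matrix becomes $L^{-c(t)}$ instead of $1/b(s|_0^n)$. Working in the quotient $Z = Z_s = C_0(X)/I_s$, I identify $Z^*$ with $\spn\{\delta_{s_0}, \ldots, \delta_{s_n}\}$ equipped with $\tri{\cdot}^*$, and take $z_0^*, \ldots, z_n^*$ to be the rows of $\trm(s)$. For $k \geq 1$, the tuple $(s_k, \ldots, s_n)$ lies in $N$, and because $[s|_1^n] = [t|_1^n]$ together with the $G$-invariance of $b$ (Lemma~\ref{l:bmap}(\ref{b encodes})) forces $b((s_k, \ldots, s_{k+r})) = b((t_{i+k}, \ldots, t_{i+k+r}))$ for each admissible $r$, so $z_k^*$ is exactly the functional from the seminorm $\rho_{(s_k, \ldots, s_n)}$, giving $\|z_k^*\|^* \leq 1$. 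For $k = 0$, the natural $\rho_s$-functional has last entry $1/b(s|_0^n) \geq L^{-c(t)}$ by Lemma~\ref{l:bmap}(\ref{value of b}) and $c(s) = c(t)$, so the claimed $z_0^*$ is coordinatewise dominated by it on positive vectors, whence $\|z_0^*\|^* \leq 1$ as well. The smallness hypothesis of Lemma~\ref{l:near 1} holds because $i \geq 1$ and $L > 9$ together with Lemma~\ref{l:bmap}(\ref{est on b}) yield $1/b(s|_j^k) \leq L^{4-3(i+k)} \leq 9^{1-3k}$ and $L^{-c(t)} \leq 9^{1-3n}$; Gaussian elimination then produces $a(s) \in [4/5, 1]^{n+1}$.

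By Lemma~\ref{l:near 1}, the identity $\tri{\sum_{k=0}^n \beta_k \delta_{s_k}} = a(s) \cdot \beta$ for $\beta \in [4/5, 1]^{n+1}$ reduces to showing $\|a(s)\|_Z \leq 1$; equivalently, for every $\varepsilon > 0$, producing $x \in C_0(X)_+$ with $x(s_k) = a(s)_k$ and $\tri{x} \leq 1+\varepsilon$. Following the recipe of Lemma~\ref{l:norms_on_quotient}, I select disjoint open sets $U_k \ni s_k$ by invoking Lemma~\ref{l:pq-exceptional}(2) to avoid every $(p, q, \varepsilon)$-exceptional orbit, and shrink further so that $U_k$ misses $\bigcup\{[t_j^0] : j \leq M,\ j \neq i+k\}$ for $M$ sufficiently large (the analogue of restriction [R1]). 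By Urysohn's lemma, take $x$ supported in $\bigcup_k U_k$ with $0 \leq x \leq a(s)_k$ on $U_k$ and $x(s_k) = a(s)_k$. The verification that $\rho_{t'}(x) \leq 1+\varepsilon$ for every $t' \in N$ splits by the starting index $p$; the subcases $p \notin \{i, \ldots, i+n\}$ and the proper-sub-tuple cases $p \in \{i, \ldots, i+n-1\}$ with $p+q < i+n$ go through verbatim as in Lemma~\ref{l:norms_on_quotient}.

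The main obstacle is the novel full-length subcase $p = i$, $q = n$ (the case $q > n$ reducing to this by restriction [R1] and a geometric tail estimate), which is exactly where the replacement of $1/b(s|_0^n)$ by $L^{-c(t)}$ is forced. If $t'$ is $(i, n, \varepsilon)$-exceptional, i.e., $b(t') < (1+\varepsilon)^{-1} L^{c(t)}$, then by the choice of $U_k$ some $x(t_{i+j}^{\kappa_j})$ vanishes and the estimate closes as in the proper-sub-tuple subcase. Otherwise $1/b(t') \leq (1+\varepsilon) L^{-c(t)}$, and for each $r < n$ the intermediate term $b(t'|_0^r)$ either satisfies $b(t'|_0^r) \geq b(s|_0^r)$ (using $[s|_0^{n-1}] = [t|_0^{n-1}]$ and the $G$-invariance of $b$) or makes $t'|_0^r$ itself $(i,r)$-exceptional, killing another $x(t'_j)$. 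Combining these, one obtains
\[ \rho_{t'}(x) \leq \lambda_i a(s)_0 + \sum_{r=1}^{n-1} \frac{a(s)_r}{b(s|_0^r)} + (1+\varepsilon) L^{-c(t)} a(s)_n \leq (1+\varepsilon)\, z_0^*(a(s)) = 1+\varepsilon, \]
matching exactly the modified first row of $\trm(s)$. This completes the construction of $x$, delivers $\|a(s)\|_Z \leq 1$, and thereby establishes the lemma.
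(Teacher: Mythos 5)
Your overall architecture (rows of $\trm(s)$ as functionals, reduction via Lemma~\ref{l:near 1}, Urysohn construction with the restrictions [R1]/[R2] and the case analysis on the starting index $p$) is the same as the paper's, and the non-critical cases are handled correctly. But there is a genuine gap, and it sits exactly at the one place where this lemma differs from Lemma~\ref{l:norms_on_quotient}: the $(0,n)$ entry. You take the hypothesis ``$s\in N$'' at face value and use it twice — you read off $\|z_k^*\|^*\leq 1$ from the fact that $\rho_{s|_k^n}$ and $\rho_s$ are literally among the seminorms defining $\tri{\cdot}$, and you run the upper-bound construction for $x$ with $x(s_k)=a(s)_k$. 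These two halves contradict each other. If $s$ really lies in $N$, then $\rho_s$ participates in the supremum \eqref{eq:define_new_norm}, so for your function $x$ one has
\begin{equation*}
\tri{x}\ \geq\ \rho_s(x)\ =\ z_0^*\!\cdot a(s) + a(s)_n\Big(\tfrac1{b(s)}-L^{-c(t)}\Big)\ =\ 1+\delta_0
\end{equation*}
with $\delta_0>0$ fixed (since $b(s)<L^{c(s)}=L^{c(t)}$ strictly when the orbit $G't_{i+n}^0$ is infinite), so $\tri{x}\leq 1+\varepsilon$ is impossible for small $\varepsilon$. Concretely, the failure in your argument is in the subcase $p=i$, $q=n$: the orbit $[s]$ is itself $(i,n,\varepsilon)$-exceptional for small $\varepsilon$, and no neighbourhoods $U_k\ni s_k$ can separate $s$ from its own orbit, so Lemma~\ref{l:pq-exceptional}(2) does not ``kill'' the term $\rho_s(x)$ as you claim. (Equivalently: for $s\in N$ the norm is already computed by Corollary~\ref{c:norms on the dual}, with $\zeta_{0n}=1/b(s)>L^{-c(t)}$, which is incompatible with the formula you are proving.)

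The lemma is really meant for — and is applied in Lemma~\ref{l:preserve_orbits} to — a tuple $s$ that is merely comparable to $t$ (an element of $\widetilde N$, e.g.\ $s=\phi(t)$), not lying in the orbit closure of any $N$-tuple of the form $\big(t_i^{\gamma_0},\ldots,t_{i+n-1}^{\gamma_{n-1}},t_{i+n}^\gamma\big)$; for such $s$ the quantity $b(s|_0^n)$ is undefined and $\rho_s$, $\rho_{s|_k^n}$ are not defining seminorms. Under that reading your domination argument for $z_0^*$ is unavailable, and the missing ingredient is precisely the paper's derivation of the coefficient $L^{-c(t)}$: one approximates $s$ by $g_m\big(t_i^{\gamma_0},\ldots,t_{i+n-1}^{\gamma_{n-1}},t_{i+n}^{\kappa(m)}\big)$ and uses the hypothesis that no $\gamma$ gives $\big[\big(t_i^{\gamma_0},\ldots,t_{i+n-1}^{\gamma_{n-1}},t_{i+n}^\gamma\big)\big]=[s]$ to force $\kappa(m)\to\infty$, whence $1/b\big(t_i^{\gamma_0},\ldots,t_{i+n}^{\kappa(m)}\big)\downarrow L^{-c(t)}$ by Lemma~\ref{l:bmap}(\ref{value of b}); this limit, together with the continuity of $x$, is what yields $\tri{x}\geq z_0^*\cdot u$ and makes the whole scheme consistent. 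Your $k\geq 1$ estimates survive this correction (after replacing ``$s|_k^n\in N$'' by the same approximation device), but the $k=0$ lower bound and the $(i,n)$-exceptional upper-bound case need to be redone along these lines.
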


\begin{proof}
	Denote by $z_k^*$ ($0 \leq k \leq n$) the rows of the matrix $\trm(s)$ described in the statement of the lemma.
	As in the proof of Corollary \ref{c:norms on the dual}, it suffices to show that $\|u\|_{Z_s} = \max_k z_k^* \cdot u$ whenever $u = (u_0, \ldots, u_n)$ satisfies $\vee_k |u_k - 1| \leq 1/5$, and also that $\max_{0 \leq k \leq n} \|z_k^*\|^* \leq 1$.

	
	First show that $\|u\|_{Z_s} \geq \max_k z_k^* \cdot |u|$ holds for any $u$ (this will imply that $\|z_k^*\|^* \leq 1$ for $0 \leq k \leq n$).
	It suffices to establish this inequality for $u \geq 0$, which is what we shall do.
	Suppose $x(s_k) = u_k$ for $0 \leq k \leq n$, and show that $\tri{x} \geq \max_k z_k^* \cdot u$.
	First consider $1 \leq k \leq n$.
	Find a sequence $(g_m) \subset G'$ so that $\lim_m g_m t_{i+k}^{\gamma_k} = s_k$ for $1 \leq k \leq n$. Due to the $G'$-invariance of the function $b$, we have
	$$
	\tri{x} \geq \lambda_{i+k} x \big( g_m t_{i+k}^{\gamma_k} \big) + \sum_{j=1}^{n-k} \frac{x(g_m t_{i+k+j}^{\gamma_{k+j}})}{b(t|_k^{k+j})} ,
	$$
	and the continuity of $x$ gives $\tri{x} \geq z_k^* \cdot u$.
	
	The case of $k=0$ requires more care.
	Find a new sequence $(g_m)_m \subset G'$ so that $\lim_m g_m t_{i+p}^{\gamma_p} = s_p$ for $0 \leq p \leq n-1$. 
	Then for no $\gamma$ can $s_n$ be a cluster point of $\big\{g_m t_{i+n}^\gamma\big\}_{m \in \N}$. Indeed, if for some $\gamma$ there was a subsequence of $\big(g_m t_{i+n}^\gamma\big)$ convergent to $s_n$, then we would have $[s] = \big[ t_i^{\gamma_0}, \ldots, t_{i+n-1}^{\gamma_{n-1}}, t_{i+n}^\gamma\big]$.
    Thus, passing to a subsequence of $(g_m)_m$ if necessary, we can assume that $s_n = \lim_m g_m t_{i+n}^{\kappa(m)}$, where $(\kappa(m))_m$ increases without a bound. From the definition of $\tri{x}$,
	$$
	\tri{x} \geq \lambda_i x \big( g_m t_i^{\gamma_0} \big) + \sum_{j=1}^{n-1} \frac{x(g_m t_{i+j}^{\gamma_j})}{b(t|^j)} 
 + \frac{x (g_m t_{i+n}^{\kappa(m)} )}{b(t_i^{\gamma_0}, \ldots, t_{i+n-1}^{\gamma_{n-1}}, t_{i+n}^{\kappa(m)})} .
	$$
	By the definition of $b$,
	$b \big(t_i^{\gamma_0}, \ldots, t_{i+n-1}^{\gamma_{n-1}}, t_{i+n}^{\kappa(m)}\big) \nearrow L^{c(t)}$, and therefore,
	$$
	\tri{x} \geq \lambda_i x \big( g_m t_i^{\gamma_0} \big) + \sum_{j=1}^{n-1} \frac{x(g_m t_{i+j}^{\gamma_j})}{b(t|^j)} 
 + \frac{x (g_m t_{i+n}^{\kappa(m)} )}{L^{c(t)}} .
	$$
	The continuity of $x$ gives $\tri{x} \geq z_0^* \cdot u$.

	Now fix $u = (u_0, \ldots, u_n)$ with $u_k \in [\frac45,1]$, and let $A = \max_{0 \leq k \leq n} z_k^* \cdot u$. To show that $\|u\|_{Z_s} \leq A$, fix $\vr \in (0,1/2)$, and find $x \in C(K)_+$ so that $x(s_k) = u_k$ for $0 \leq k \leq n$, and, for any $t' = \big( t_{p}^{\kappa_0},..., t_{p+q}^{\kappa_q} \big) \in N$, 
	\begin{equation}
		\rho_{t'}(x) = \lambda_p x( t_p^{\kappa_0} ) + \sum_{r=1}^q \frac{x( t_p^{\kappa_r} )}{b ( t'|^r) } \leq (1 + 2\varepsilon) A .
		\label{eq:new_desired_inequality}
	\end{equation}
 We shall normalize to $\vee_k u_k = 1$, then $A > 1$.
 
	As in Lemma \ref{l:norms_on_quotient}, we achieve this by constructing disjoint open sets $U_k \ni s_k$ ($0 \leq k \leq n$), and finding $x$ so that $0 \leq x \leq u_k$ on $U_k$, $x(s_k) = u_k$ (for all $k$), and $x = 0$ outside of $\cup_k U_k$. We impose two restrictions on the sets $U_k$.
	
	[R1'] $\triangleright$
	Find $M > i+n$ so large that $\lambda_M + L^{3-M} < \min\{ \lambda_{i+n+1}, 1 + \vr \}$.
	Make sure that, for each $k \in \{0, \ldots, n\}$, $U_k \cap \big( \cup [t_j^0] : j \in [1,M] \backslash \{i+k\} \big) = \emptyset$ (we can do this, since the sets $[t_j^0]$ are closed). $\triangleleft$
	
	[R2'] $\triangleright$ $U_0,...,U_n$ satisfies the requirements of Case (2) in Lemma \ref{l:pq-exceptional} for all $(p,q,\varepsilon)$-exceptional $t' \in N$. In particular, for any $(p,q,\varepsilon)$-exceptional $t'\in N$,	then there exists $j \in [0,q] $ such that $t_{p+j}^{\kappa_j} \notin U_{p+j-i}$.
	$\triangleleft$

	We shall establish \eqref{eq:new_desired_inequality} by considering several cases separately: (i) $p > M$; (ii) $p \in \{1, \ldots, i-1, i+n+1, \ldots, M\}$; (iii) $p= i+n$; (iv) $i \leq p < i+n$.
	The latter case will be the most difficult.
	
	For our computations, recall that
	$$
	\frac45 \leq \vee_{i \leq k \leq i+n} u_k < \vee_{i \leq k \leq i+n} \lambda_{i+k} u_k \leq A < C .
	$$
	Also, due to the restriction [R1'], $x(t_{i+p}^{\kappa}) \leq u_p$ for any $p \in [0,n]$ and any $\kappa$.
	
	(i) $p > M$. As the sequence $(\lambda_i)$ is decreasing, hence $\lambda_p x( t_p^{\kappa_0} ) \leq \vee_k \lambda_{i+k} u_k < 1$. Therefore, the left hand side of \eqref{eq:desired_inequality} does not exceed $A + \sum_{m=M+1}^\infty L^{-m} < (1 + \varepsilon)A$, as desired.
	
	(ii) $p \in \{1, \ldots, i-1, i+n+1, \ldots, M\}$. The term $\lambda_p x( t_p^{\kappa_0} )$ in \eqref{eq:new_desired_inequality} vanishes, hence the left side of that inequality does not exceed $\sum_{m=1}^\infty L^{-m} < 1 < A$.
	
	(iii) If $p = i+n$, then the only non-zero terms in the left hand side of \eqref{eq:new_desired_inequality} correspond to $r = i+n$, or $r > M$. Thus, this left hand side does not exceed
	$$
	\lambda_{n+i} u_n + \sum_{m=M+1}^\infty L^{-m} < A + \varepsilon \leq (1 + \varepsilon) A .
	$$
 
	(iv) As before, the case of $p \in \{i, \ldots, i+n-1\}$ is the hardest.
	Write the left hand side of  \eqref{eq:new_desired_inequality} as
	$$
	\lambda_p x( t_p^{\kappa_0} ) + 
	\sum_{r=1}^{i+n-p} \frac{x( t_p^{\kappa_r} )}{b ( t'|^r) }  + \sum_{r=M+1-p}^q \frac{x( t_p^{\kappa_r} )}{b ( t'|^r) } .
	$$
	We have
	$$
	\sum_{r=M+1}^q \frac{x( t_p^{\kappa_r} )}{b ( t'|^r) } < \sum_{m=M+1}^\infty L^{-m} < \varepsilon .
	$$
	It remains to show that
	\begin{equation}
		(*) := \lambda_p x( t_p^{\kappa_0} ) + 
		\sum_{r=1}^{i+n-p} \frac{x( t_{p+r}^{\kappa_r} )}{b ( t'|^r) } \leq (1+\varepsilon)A . 
		\label{eq:new_difficult_sum}
	\end{equation}

    Consider the equations
	\begin{equation}
		 b \big( t'|^r \big) \geq 
		   b \big( s|_{p-i}^{p-i+r} \big) 
		\label{eq:new_ineq_for_b}
	\end{equation}
 and 
	\begin{equation}
		(1+\varepsilon)b(t_i^{\kappa_0},..., t_{i+n}^{\kappa_n}) \geq L^{c(t)} . 
	\label{eq:new_ineq_for_b2}
	\end{equation}
 If \eqref{eq:new_ineq_for_b} holds for any $r$, and \eqref{eq:new_ineq_for_b2} holds,
	 then as in Lemma \ref{l:norms_on_quotient},
	\begin{align*}
		(*) &
		\leq
		\lambda_p u_{p-i} + 
		\sum_{r=1}^{i+n-p} \frac{u_{p-i+r}}{b ( t|_{p-i}^{p-i+r} )}
		\\
		&
		\leq 
		\lambda_p u_{p-i} + (1+\varepsilon)u_{i+n}\zeta_{0n}+ \sum_{r=1}^{i+n-p-1} u_{p-i+r} \zeta_{p-i,p-i+r}  
		\leq (1+2\varepsilon) A . 
	\end{align*}
	If \eqref{eq:new_ineq_for_b} fails for some $r$, find the smallest $r$ for which this happens.  By the restriction [R2'], there exists $j \in [0,r]$ so that $t_{p+j}^{\kappa_{p-i+j}} \notin U_{p-i+j}$.  Proceed as in the proof of Lemma \ref{l:norms_on_quotient}. 
	Finally, if \eqref{eq:new_ineq_for_b} holds but \eqref{eq:new_ineq_for_b2} fails,  restrictions [R1'] and [R2'] combined guarantee that $x(t_{n+i}^{\kappa_{n}}) = 0$, and $x( t_{p+1}^{\kappa_r} ) \leq u_{p-i+r}$ for $r < i-p+n$, so we have
	$$
	(*) \leq \lambda_p u_{p-i} + 
	\sum_{r=1}^{i+n-p-1} u_{p-i+r}\zeta_{p-i, p-i+r} \leq (1+\varepsilon) A.  \qedhere $$
\end{proof}

To complete the proof of Theorem \ref{t:main-theorem}, we need to show that any $T \in \isom(C_0(X), \tri{\cdot})$ is implemented by an element of $G$.
	Note that $T$ is a lattice isomorphism on $C_0(X)$, hence a weighted composition \cite[Theorem 3.2.10]{M-N}:
	$[Tx](k) = w(k) x(\phi k)$, where $w  \in C_b(X)_+$ is bounded away from $0$ and $\phi$ is a homeomorphism on $X$ (here $C_b(X)$ denotes the set of all bounded continuous functions on $X$).

	\begin{lemma}\label{l:w=1}
    The function $w$ defined above equals $1$ everywhere. Furthermore, for any $i$, $\phi [t_i^0] = [t_i^0]$.
	\end{lemma}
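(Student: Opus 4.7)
Since $T$ is a lattice isometry of $(C_0(X), \tri{\cdot})$, we have $[Tx](k) = w(k) x(\phi(k))$, and dually $T^*\delta_k = w(k)\delta_{\phi(k)}$. Setting $v(k) := \tri{\delta_k}^*$, Lemma~\ref{l:unit} gives $v(k) = 1/\lambda_j$ on $[t_j^0]$ and $v(k) = 1$ elsewhere; isometry of $T^*$ on the dual yields
\[w(k)\,v(\phi(k)) = v(k), \qquad k \in X,\]
which is the basic relation I will exploit.

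The first step is to show $\phi(\Omega) = \Omega$, where $\Omega := \bigcup_j [t_j^0]$. Since each $[t_j^0]$ is nowhere dense and the closed orbits have pairwise disjoint closures (Lemma~\ref{l:dense}), one checks that $v$ is continuous at $k$ if and only if $k \notin \Omega$: at $k \in [t_j^0]$, nowhere-density produces a sequence in $X \setminus \Omega$ tending to $k$ along which $v$ equals $1 \neq v(k)$, and at $k \notin \Omega$, disjointness of orbit closures forces any $[t_j^0]$-sequence approaching $k$ to have $j \to \infty$, whence $v \to 1$. Since $v \circ \phi = v/w$ and $w$ is continuous and positive, $v \circ \phi$ and $v$ share their discontinuity sets, so $\phi^{-1}(\Omega) = \Omega$.

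The main step is to show $\phi([t_i^0]) = [t_i^0]$ for every $i$. For $k \in [t_i^0]$, $\phi(k) \in [t_{l(k)}^0]$ for some $l(k) \in \N$, and $w(k) = \lambda_{l(k)}/\lambda_i$. Apply Corollary~\ref{c:norms on the dual} to the pair $(t_i^0, t_{i+1}^0) \in N$:
\[\tri{\beta_0 \delta_{t_i^0} + \beta_1 \delta_{t_{i+1}^0}}^* = \frac{1 - 1/(\lambda_{i+1}b)}{\lambda_i}\beta_0 + \frac{\beta_1}{\lambda_{i+1}}, \qquad \beta \in [4/5,1]^2,\]
with $b := b(t_i^0, t_{i+1}^0)$. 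By isometry of $T^*$, this must equal the dual norm of $\beta_0 w_i \delta_{\phi(t_i^0)} + \beta_1 w_{i+1} \delta_{\phi(t_{i+1}^0)}$, with $w_j := w(t_j^0)$. I analyze by position of $(l_0, l_1) := (l(t_i^0), l(t_{i+1}^0))$. When $l_1 = l_0 + 1$, the image pair lies in $\widetilde{N}$; a second application of Corollary~\ref{c:norms on the dual} and matching of coefficients reduces the problem to $\lambda_{l_1} b(\phi(t_i^0), \phi(t_{i+1}^0)) = \lambda_{i+1} b$, and the multiples-of-$3$ spacing of the exponents $c(\cdot)$ in Lemma~\ref{l:bmap}(\ref{value of b}) (which no ratio $\lambda_\alpha/\lambda_\beta \in (1/1.1, 1.1)$ can bridge, since $L > 9$) forces $l_0 = i$. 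When $l_0 = l_1 + 1$, a coordinate swap reduces to the previous case, and matching coefficients together with $w_i = \lambda_{l_0}/\lambda_i$ yields $1 = 1 - 1/(\lambda_{i+1}b)$, impossible. When $l_0 = l_1$ or $|l_0 - l_1| \geq 2$, the image pair lies outside $\widetilde{N}$; a construction of disjointly supported test functions $x_0, x_1$ near $\phi(t_i^0), \phi(t_{i+1}^0)$ (in the spirit of Lemma~\ref{l:unit}), using Lemma~\ref{l:bmap}(\ref{est on b}) to bound $1/b$ for long tuples crossing both orbits, produces an image-norm lower bound exceeding the target's correction $1/(\lambda_i \lambda_{i+1} b)$, a contradiction. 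Hence $l_0 = i$ and $l_1 = i+1$, so $\phi(t_i^0) \in [t_i^0]$; by continuity of $\phi$ and density of $G' \cdot t_i^0$ in $[t_i^0]$, $\phi([t_i^0]) \subseteq [t_i^0]$, and applying the argument to $T^{-1}$ gives equality.

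Finally, for $k \in [t_i^0]$, $v(k) = v(\phi(k)) = 1/\lambda_i$, so $w(k) = 1$; density of $\Omega$ and continuity of $w$ yield $w \equiv 1$. The main obstacle is the case analysis in the third paragraph (especially the $|l_0 - l_1| \geq 2$ subcase), which requires careful test-function constructions and balancing of the Lemma~\ref{l:bmap}(\ref{est on b}) estimates against the target correction $1/(\lambda_i\lambda_{i+1}b)$.
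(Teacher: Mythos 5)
Your opening step is correct and is essentially the paper's key observation in a slightly stronger form: the paper simply notes that $\big(\cup_i[t_i^0]\big)\cup\phi\big(\cup_i[t_i^0]\big)$ is meagre (a homeomorphism preserves nowhere dense, hence meagre, sets), so that its complement is dense; you instead characterize $\Omega=\cup_i[t_i^0]$ as the discontinuity set of $v(k)=\tri{\delta_k}^*$ and deduce $\phi^{-1}(\Omega)=\Omega$. At that point you are already done, and should have stopped: for $k\notin\Omega$ you have $\phi(k)\notin\Omega$, hence $w(k)=v(k)/v(\phi(k))=1$ on $X\setminus\Omega$, which is dense because $\Omega$ is meagre and $X$ is a Baire space; continuity gives $w\equiv1$; and then $v(\phi(k))=v(k)$ for every $k$, which identifies the orbit index and yields $\phi[t_i^0]=[t_i^0]$ in one line. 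This is exactly the paper's route (it derives orbit preservation from $w=1$, not the other way around).

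Instead you route orbit preservation through a case analysis on $(l_0,l_1)$ via Corollary \ref{c:norms on the dual}, and that analysis contains a genuine gap. The cases $l_0=l_1$ and $|l_0-l_1|\ge2$ are only sketched, and the asserted direction of the inequality is wrong in general. You claim a lower bound on the image norm exceeding the target, but the coupling between two atoms \emph{decreases} the dual norm of their combination, and the image pair can be coupled far more strongly than $(t_i^0,t_{i+1}^0)$. Concretely, take $i=5$ and suppose $\phi(t_5^0)\in[t_1^0]$, $\phi(t_6^0)\in[t_3^0]$: the target correction is $1/(\lambda_5\lambda_6 b)$ with $b\ge L^{c_m-1}$, $a_m=(5,6)$, $c_m=33$, so of order $L^{-32}$, whereas the image points are coupled through tuples with $\iota=(1,2,3)=a_3$, i.e.\ with coefficient $1/b'\ge L^{-9}$; solving the resulting linear program shows the image dual norm falls \emph{below} the target, not above it. (Lemma \ref{l:bmap}(\ref{est on b}) bounds $1/b'$ only by $L^{4-3l_{\max}}$, which is nowhere near small enough to be dominated by $1/(\lambda_i\lambda_{i+1}b)$.) A contradiction can still be extracted in every sub-case, since the two corrections always differ by a factor of at least $L^{2}$ in one direction or the other, but that requires a two-sided evaluation of $\tri{\beta_0\delta_{s_0}+\beta_1\delta_{s_1}}^*$ for pairs in general position --- a further lemma in the spirit of Lemma \ref{l:comparison} that you have not proved. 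As written, the main step does not go through; fortunately, it is also unnecessary.
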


	\begin{proof}
	Observe that, for $k \in X$, $T^* \delta_k = w(k) \delta_{\phi k}$. Further, by Lemma \ref{l:unit},
	$\tri{\delta_k}^* = 1/\lambda_i$ for $k \in [t_i^0]$, while for $k\notin \cup_i [t_i^0]$, $\triple{\delta_k}^* = 1$. 
	The set $\cup_i [t_i^0]$ is meagre in $X$.
 Note that $\phi$, being a homeomorphism, takes nowhere dense sets to nowhere dense sets, hence meagre sets to meagre sets. Consequently, $X_0 = \big(\cup_i [t_i^0]\big) \cup \phi\big(\cup_i [t_i^0]\big)$ is meagre.
	
 For $k \notin X_0$, $\triple{\delta_k}^* = 1$, while $\triple{T^* \delta_k}^* = w(k) \tri{\delta_{\phi k}}^* = w(k)$.	This shows that $w = 1$ on $K_0$, and therefore, by the continuity of $w$, $w = 1$ everywhere.

	Now note that $\tri{\delta_k}^* = 1/\lambda_i$ iff $k \in [t_i^0]$. As $T^*$ is an isometry, we conclude that $k \in [t_i^0]$ iff $\phi k \in [t_i^0]$. This establishes the ``furthermore'' statement.
	\end{proof}
	
	\begin{lemma}\label{l:preserve_orbits}
	 For any $t\in \widetilde{N}$, $\phi(t) \in [t]$.
	\end{lemma}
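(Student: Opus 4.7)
The plan is to proceed by induction on $|t|$. The case $|t|=1$ is the ``furthermore'' assertion of Lemma \ref{l:w=1}. For the inductive step with $|t| = n+1 \geq 2$, applying the induction hypothesis to $t|_0^{n-1}$ and $t|_1^n$ yields $\phi(t|_0^{n-1}) \in [t|_0^{n-1}]$ and $\phi(t|_1^n) \in [t|_1^n]$, so that $\phi(t)$ is automatically almost equivalent to $t$; projecting the almost equivalence to proper sub-ranges further shows $b(\phi(t)|_j^k) = b(t|_j^k)$ for every $(j,k) \neq (0,n)$.

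I would first treat the case $t \in N$, written $t = (t_i^{\gamma_0},\ldots,t_{i+n}^{\gamma_n})$. Suppose for contradiction that $\phi(t) \notin [t]$, and fix $\beta = (\beta_0,\ldots,\beta_n) \in [4/5,1]^{n+1}$ with $\beta_0 > 0$. Since $T^*\delta_k = \delta_{\phi k}$ (by Lemma \ref{l:w=1}), the isometry identity combined with Corollary \ref{c:norms on the dual} gives
\[
\tri{\textstyle\sum_k \beta_k \delta_{\phi(t_{i+k}^{\gamma_k})}}^{*} = \tri{\textstyle\sum_k \beta_k \delta_{t_{i+k}^{\gamma_k}}}^{*} = a(t) \cdot \beta .
\]
If $[\phi(t)] \cap N \neq \emptyset$, pick $t^* \in [\phi(t)] \cap N$; then $t^*$ inherits almost equivalence to $t$, and Lemma \ref{l:invariance} rewrites the left-hand side as $a(t^*) \cdot \beta$. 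The matrices $\trm(t^*)$ and $\trm(t)$ agree in all entries except the $(0,n)$-entry ($1/b(t^*)$ versus $1/b(t)$), so Gaussian elimination yields $a(t^*)_k = a(t)_k$ for $k \geq 1$ and $a(t^*)_0 - a(t)_0 = (\lambda_i\lambda_{i+n})^{-1}(1/b(t) - 1/b(t^*))$. Combined with $\beta_0 > 0$, the equality forces $b(t^*) = b(t)$, hence $[t^*] = [t]$ by Lemma \ref{l:bmap}(\ref{b encodes}), contradicting $\phi(t) \notin [t]$. If $[\phi(t)] \cap N = \emptyset$, then every class of the form $[(t_i^{\gamma_0},\ldots,t_{i+n-1}^{\gamma_{n-1}},t_{i+n}^\gamma)]$ meets $N$, so the hypothesis of Lemma \ref{l:comparison} holds for $\phi(t)$; a routine extension of that lemma from $s \in N$ to $s \in \widetilde{N}$ almost equivalent to $t$ writes the left-hand side with underlying $(0,n)$-entry $L^{-c(t)}$, and equating this with $1/b(t)$ would force $b(t) = L^{c(t)}$, contradicting the strict inequality in Lemma \ref{l:bmap}(\ref{value of b}).

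To handle a general $t \in \widetilde{N}$, I would bootstrap from the $N$-case by approximation. Choose $t_m = (t_i^{\alpha_{m,0}},\ldots,t_{i+n}^{\alpha_{m,n}}) \in N$ with $t_m \to t$ componentwise, using density of $G't_{i+k}^0$ in $[t_{i+k}^0]$. The $N$-case above gives $\phi(t_m) \in [t_m] = \overline{Gt_m}$, so one can pick $h_m \in G$ with $d(h_m t_m, \phi(t_m)) < 1/m$. Continuity of $\phi$ yields $\phi(t_m) \to \phi(t)$, and local equicontinuity of $\{h_m\}$ on a compact neighborhood of $t$ forces $d(h_m t_m, h_m t) \to 0$; the triangle inequality then gives $h_m t \to \phi(t)$, and since $h_m t \in Gt \subseteq [t]$ with $[t]$ closed, $\phi(t) \in [t]$.

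I expect the main subtle point to be verifying the extension of Lemma \ref{l:comparison} from $s \in N$ to $s \in \widetilde{N}$ almost equivalent to $t$. The original proof uses the assumption $s \in N$ only through (i) selecting sequences $(g_m) \subset G'$ with $g_m t_{i+k}^{\gamma_k} \to s_k$, which requires only $s_k \in [t_{i+k}^0]$, and (ii) Urysohn bumps around the $s_k$, which hold for any $s_k \in X$; so the extension should go through essentially verbatim.
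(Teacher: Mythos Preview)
Your argument is correct and follows the paper's approach: induction on $|t|$, the isometry identity $\tri{\sum_k\beta_k\delta_{t_{i+k}^{\gamma_k}}}^*=\tri{\sum_k\beta_k\delta_{\phi(t_{i+k}^{\gamma_k})}}^*$, and comparison of the vectors $a(\cdot)$ via the $(0,n)$-entries of the triangular matrices $\trm(\cdot)$.

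A few small points. Your dichotomy $[\phi(t)]\cap N\neq\emptyset$ versus $=\emptyset$ is slightly coarser than the paper's (which asks whether $[\phi(t)]=[(t|_0^{n-1},t_{i+n}^\gamma)]$ for some $\gamma$), but your version works just as well and is arguably cleaner. You are right that Lemma~\ref{l:comparison} must be extended to $s\in\widetilde N$; the paper applies it that way without comment, and the extension is indeed routine. One citation is imprecise: Lemma~\ref{l:bmap}(\ref{value of b}) does \emph{not} assert $b(t)<L^{c(t)}$ in general (equality occurs when $N_m$ has only finitely many $\sim$-classes). The strict inequality is, however, automatic in your Case~2': if there were finitely many classes $[s^{(1)}],\ldots,[s^{(M)}]$ then $\prod_k[t_{i+k}^0]=\overline{N_m}=\bigcup_j[s^{(j)}]$, forcing $\phi(t)\in[s^{(j)}]$ for some $j$ and contradicting $[\phi(t)]\cap N=\emptyset$. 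The paper reaches the same conclusion by invoking the infinitude of $G't_{i+n}^0$ implicit in the proof of Lemma~\ref{l:comparison}.

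Finally, your bootstrap from $t\in N$ to $t\in\widetilde N$ is a genuine addition: the paper's proof, despite the lemma's stated generality, only treats $t\in N$ (this suffices for the application in the proof of Theorem~\ref{t:main-theorem}).
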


	\begin{proof}
	We shall prove this statement by induction on $|t|$. The basic case ($|t| = 1$) is in Lemma \ref{l:w=1}. It remains to perform the inductive step: prove that our statement holds for $|t| =n+1$, provided it holds for $|t| = n$.
	
	Suppose, for the sake of contradiction, that $|t| = n+1$ is such that $\phi(t) \notin [t]$.
	By the induction hypothesis, $\phi(t|_0^{n-1}) \in [t|_0^{n-1}]$, and $\phi(t|_1^{n}) \in [t|_1^{n}]$.
	
	Write $t = \big( t_i^{\gamma_0}, \ldots, t_{i+n}^{\gamma_n} \big)$, and $\phi(t) = s = (s_0, \ldots, s_n)$. Then for any $\beta_0, \ldots, \beta_n$,
	\begin{equation}
	 \tri{ \sum_{k=0}^n \beta_k \delta_{t_{i+k}^{\gamma_k}} }^* = \tri{ \sum_{k=0}^n \beta_k \delta_{s_k} }^* .
	 \label{eq:cannot_hold}
	\end{equation}
	We shall show this cannot happen. Recall for future use that, whenever $\beta = \big(\beta_0, \ldots, \beta_n\big) \in \big[ \frac45, 1 \big]^{n+1}$, the left hand side of \eqref{eq:cannot_hold} is given by
	$$
	\tri{ \sum_{k=0}^n \beta_k \delta_{t_{i+k}^{\gamma_k}} }^* = a(t) \cdot \beta .
	$$
	Here, $a(t)$ is the solution to the equation $\trm(t) a(t) = \one$, where
	$$
 \trm(t) = \begin{pmatrix}  \lambda_i  &  \zeta_{01}(t)  &  \zeta_{02}(t)  &  \ldots  &  \zeta_{0n}(t)  \\
      0  &   \lambda_{i+1}  &  \zeta_{12}(t)  &  \ldots  &  \zeta_{1n}(t)  \\
      \dots  &  \dots  &  \dots  &  \ldots  &  \dots  \\
      0  &   0  &  0  &  \ldots  &  \lambda_{i+n}
     \end{pmatrix} ,
     {\textrm{   with   }}
        \zeta_{jk}(t) = \frac1{b\big( t|_{j}^{k} \big)} .
 $$
	
	
	To evaluate the right hand side of \eqref{eq:cannot_hold} for $\beta \in \big[ \frac45, 1 \big]^{n+1}$, consider two possible cases:
	
	(1) 
	$[s] = [t']$, where $t' = \big( t_i^{\gamma_0}, \ldots, t_{i+n-1}^{\gamma_{n-1}}, t_{i+n}^\kappa \big)$ for some $\kappa \neq \gamma_n$. Then Corollary \ref{c:norms on the dual} tells us that
	$\tri{ \sum_{k=0}^n \beta_k \delta_{s_k} }^* = a(t') \cdot \beta$
	whenever $\beta = \big(\beta_0, \ldots, \beta_n\big) \in \big[ \frac45, 1 \big]^{n+1}$.
	Here, $a(t')$ is the solution to the equation $\trm(t') a(t') = \one$; $\trm(t')$ is defined in a manner similar to the matrix $\trm(t)$, and has entries
	$$
	\zeta_{jk}(t') = \frac1{b\big( t'|_{j}^{k} \big)} .
	$$
	Recall that $t'|^{n-1}_0 \in [t|^{n-1}_0]$, and $t'|^n_{1} \in [t|^n_{1}]$, and therefore, $\zeta_{jk}(t') = \zeta_{jk}(t)$ unless $j = 0, k = n$.
	On the other hand $t' \notin [t]$, and therefore, $\zeta_{0n}(t') \neq \zeta_{0n}(t)$.
	Calculating $a(t)$ and $a(t')$ using Gauss elimination (with triangular matrices $\trm(t)$ and $\trm(t')$ respectively), we conclude that $a(t) \neq a(t') = a(s)$, hence \eqref{eq:cannot_hold} fails.

	(2) For no $t' = \big( t_i^{\gamma_0}, \ldots, t_{i+n-1}^{\gamma_{n-1}}, t_{i+n}^\kappa \big)$ we have $[s] = [t']$.
	By Lemma \ref{l:comparison}, for $\beta \in \big[ \frac45, 1 \big]^{n+1}$ we have
	$\tri{ \sum_{k=0}^n \beta_k \delta_{s_k} }^* = a(s) \cdot \beta$, where $\trm(s) a(s) = \one$ and the matrix $\trm(s)$ arises from entries
	$$
	\zeta_{jk}(s) = \zeta_{jk}(t) {\textrm{  for  }} (j,k) \neq (0,n) , {\textrm{  and   }}
	\zeta_{0n}(s) = L^{-c(t)} ,
	$$
	As noted in the proof of Lemma \ref{l:comparison}, $G' t_{i+n}^0$ is infinite, hence $\zeta_{0n}(t) < L^{-c(t)}$.
	Again, $a(t) \neq a(s)$, so \eqref{eq:cannot_hold} fails.
	\end{proof}
	
\begin{proof} [Proof of Theorem \ref{t:main-theorem}]
	By Lemma \ref{l:G_invariance}, we have $G\subseteq \isom(C_0(X), \triple{\cdot})$.  Now consider $T \in \isom(C_0(X), \triple{\cdot})$; by Lemma \ref{l:w=1}, it is implemented by a homeomorphism $\phi : X \to X$.
By Lemma \ref{l:preserve_orbits}, for any $t\in \widetilde{N}$, $\phi(t) \in [t]$.  
Taking $t = (t_1^0, \ldots, t_n^0)$, we conclude that for any $n \in \N$ there exists $g_n \in G$ such that $\max_{1 \leq k \leq n} d(\phi(t_k^0), g_n(t_k^0) \ ) < \frac{1}{n}$.  Recall that $(t_i^0)_{i \in \N}$ is dense in $X$, so $g_n$ converges to $\phi$ on a dense set.  Since $G$ is  locally equicontinuous, and $X$ is locally compact, $g_n \rightarrow \phi$ on all of $X$.  Since $G$ is SOT-closed, by Lemma \ref{l:equicontinuous+SOT=isometry}, $\phi \in G$.
\end{proof}

\section{Applications of Theorem \ref{t:main-theorem}}\label{s:more-results}

In this section use Theorem \ref{t:main-theorem} to obtain some more displayability results. 

\begin{proposition}\label{p:compact bounded groups}
Suppose $X$ is a locally compact Polish space, and $G$ be a subgroup of $\isom(C_0(X), \| \cdot \|)$, compact in the SOT, so that every orbit of $\{\phi_g : g \in G\}$ is nowhere dense in $X$. Then for all $C > 1$, there exists a renorming $\tri{ \cdot }$ on $C_0(X)$ such that $G = \isom(C_0(X), \tri{ \cdot })$ and $\| \cdot \| \leq \tri{ \cdot } \leq C \| \cdot \|$.
\end{proposition}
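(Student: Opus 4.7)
The plan is to deduce this directly from Theorem \ref{t:main-theorem} by verifying the three hypotheses required there. Two of them come for free: SOT-compactness of $G$ immediately implies that $G$ is relatively SOT-closed in $\isom(C_0(X))$, and the condition on orbits being nowhere dense is part of the assumption. Thus the only nontrivial task is to establish that the family $\{\phi_g\}_{g\in G}$ is locally equicontinuous on $X$.

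To this end, I would fix a compact set $K\subset X$ and consider the space $C(K,X)$ of continuous functions from $K$ to $X$, equipped with the uniform metric $d_\infty(f_1,f_2)=\sup_{t\in K} d(f_1(t),f_2(t))$. Since $K$ is compact and $X$ is metric, this is a well-defined metric space. Next I would examine the assignment $\Phi_K:G\to C(K,X)$ given by $g\mapsto \phi_g|_K$. By item (1) of Proposition \ref{p:characterize-SOT-convergence}, if $g_n\to g$ in the SOT on $\isom(C_0(X))$, then $\phi_{g_n}|_K\to \phi_g|_K$ uniformly; that is, $\Phi_K$ is continuous from $(G,\mathrm{SOT})$ to $(C(K,X),d_\infty)$.

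Since $G$ is SOT-compact, the image $\Phi_K(G)=\{\phi_g|_K:g\in G\}$ is a compact subset of $C(K,X)$. At this point the Arzel\`a--Ascoli theorem enters: any compact (hence totally bounded) subset of $C(K,X)$ is equicontinuous. Thus $\{\phi_g|_K:g\in G\}$ is equicontinuous, and since $K$ was an arbitrary compact subset of $X$, the family $\{\phi_g\}_{g\in G}$ is locally equicontinuous on $X$, as required.

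Having verified all hypotheses, I would invoke Theorem \ref{t:main-theorem} to produce an equivalent lattice norm $\tri{\cdot}$ on $C_0(X)$ with $\|\cdot\|\leq \tri{\cdot}\leq C\|\cdot\|$ and $G=\isom(C_0(X),\tri{\cdot})$. The only step requiring any real thought is the equicontinuity deduction, and even there the main obstacle is simply recognizing that the continuity statement in Proposition \ref{p:characterize-SOT-convergence} is exactly what is needed to transport SOT-compactness to compactness in $C(K,X)$, after which Arzel\`a--Ascoli does the rest.
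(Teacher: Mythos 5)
Your argument is correct and is essentially the paper's own proof: both reduce the problem to verifying local equicontinuity of $\{\phi_g\}_{g\in G}$, obtain it by noting that $g\mapsto \phi_g$ is SOT-to-uniform-on-compacta continuous (Proposition \ref{p:characterize-SOT-convergence}), so the image of the compact group is compact and Arzel\`a--Ascoli applies, and then invoke Theorem \ref{t:main-theorem}. The only cosmetic difference is that you work with one compact set $K$ at a time rather than with the space of homeomorphisms equipped with the compact-open topology.
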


\begin{proof}
 By Proposition \ref{p:characterize-SOT-convergence}, the map $\isom(C_0(X)) \to {\mathrm{Hom}}(X) : g \mapsto \phi_g$ is continuous (here $\isom(C_0(X))$ and ${\mathrm{Hom}}(X)$ are equipped with the strong operator topology, and the topology of uniform convergence on compact sets, respectively).
 A continuous image of a compact set is compact, hence $\{\phi_g : g \in G\}$ is locally equicontinuous, by Ascoli-Arzela Theorem.
 Now invoke Theorem \ref{t:main-theorem}.
\end{proof}

 \begin{theorem}\label{t:expanded-theorem}
 Suppose $X$ is a locally compact Polish space and $G$ a relatively SOT-closed subgroup of $\isom(C_0(X))$
  such that $\{\phi_g : g \in G\}$ is locally equicontinuous for some complete metric on $X$. For any locally compact Polish space $L$ with no isolated points, $G$ is displayable on $C_0(X\times L)$.  
  
  In particular:
 	\begin{enumerate}
  \item
  $G$ is displayable on $C_0(X \times [0,1])$.
 	 \item 
 	If $X$ has no isolated points, $G$ is displayable on $C_0(X^2)$.
 	\item
 	If $X$ is compact, then $G$ is displayable on $C(X^\N)$. 
 	\end{enumerate}
 \end{theorem}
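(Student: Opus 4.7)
The plan is to apply Theorem \ref{t:main-theorem} to a natural action of $G$ on $X \times L$. Define $\rho : G \to \isom(C_0(X \times L))$ by $[\rho(g)f](x,y) = f(\phi_g(x), y)$, so that $\phi_{\rho(g)}(x,y) = (\phi_g(x), y)$, and set $\tilde G = \rho(G)$. Since both $X$ and $L$ are locally compact Polish, so is $X \times L$. The goal is to verify the hypotheses of Theorem \ref{t:main-theorem} for $\tilde G$ acting on $X \times L$, thereby obtaining an equivalent lattice norm $\tri{\cdot}$ on $C_0(X \times L)$ with $\isom(C_0(X \times L), \tri{\cdot}) = \tilde G$, and then to check that $\rho$ is topologically faithful.

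Two of the three hypotheses are easy. The $\tilde G$-orbit through $(x_0, y_0)$ equals $\{\phi_g(x_0) : g \in G\} \times \{y_0\}$, which is nowhere dense in $X \times L$ because $\{y_0\}$ is nowhere dense in $L$ (by the no-isolated-points hypothesis on $L$); note this holds regardless of whether $X$ itself has isolated points. Equipping $X \times L$ with the max of fixed complete metrics $d_X$ and $d_L$, local equicontinuity of $\{\phi_{\rho(g)}\}$ on compact subsets of $X \times L$ transfers directly from local equicontinuity of $\{\phi_g\}$ on $X$, since $\phi_{\rho(g)}$ acts as the identity in the $L$-coordinate.

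The main obstacle is relative SOT-closedness of $\tilde G$. Suppose $\rho(g_n) \to T$ in SOT, with $T \in \isom(C_0(X \times L))$. By Proposition \ref{p:characterize-SOT-convergence}, $\phi_{\rho(g_n)} \to \phi_T$ uniformly on compact subsets of $X \times L$; since the second coordinate of each $\phi_{\rho(g_n)}$ is the identity, the pointwise limit forces $\phi_T(x,y) = (\psi(x), y)$ for some homeomorphism $\psi$ of $X$, and $\phi_{g_n} \to \psi$ uniformly on compact subsets of $X$. Let $g$ be the lattice isometry of $C_0(X)$ with $\phi_g = \psi$. By the local equicontinuity of $\{\phi_h\}_{h \in G}$ and Lemma \ref{l:equicontinuous+SOT=isometry}, $g_n \to g$ in SOT on $C_0(X)$; relative SOT-closedness of $G$ in $\isom(C_0(X))$ then gives $g \in G$, whence $T = \rho(g) \in \tilde G$.

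Once Theorem \ref{t:main-theorem} provides the renorming, topological faithfulness of $\rho$ follows from the same correspondence between SOT convergence and pointwise convergence of the underlying homeomorphisms, applied in both directions via Lemma \ref{l:equicontinuous+SOT=isometry} and local equicontinuity (injectivity is immediate from $\phi_{\rho(g)} = \mathrm{id}$ forcing $\phi_g = \mathrm{id}$). For the particular cases: (1) take $L = [0,1]$, a compact Polish space with no isolated points; (2) take $L = X$, which has no isolated points by hypothesis; (3) take $L = X^\N$, which is compact Polish with no isolated points provided $|X| \geq 2$ (the case $|X| = 1$ being trivial), and identify $C_0(X \times X^\N) = C(X^\N)$ via the canonical homeomorphism $X \times X^\N \cong X^\N$.
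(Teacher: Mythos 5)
Your proof is correct and follows essentially the same route as the paper: transport $G$ to the group $\widetilde G$ of composition operators induced by $(x,y)\mapsto(\phi_g(x),y)$, check the hypotheses of Theorem \ref{t:main-theorem} on $X\times L$ (orbits nowhere dense because $\{y_0\}$ has empty interior in $L$, local equicontinuity inherited in the max metric), and specialize $L$. You are in fact somewhat more careful than the paper in explicitly verifying that $\widetilde G$ is relatively SOT-closed and that $\rho$ is topologically faithful, both via Proposition \ref{p:characterize-SOT-convergence} and Lemma \ref{l:equicontinuous+SOT=isometry}, which the paper leaves implicit.
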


\begin{proof}
For $g \in G$ define the maps $\psi_g$, acting on the locally compact Polish space $X \times L$ via $\psi_g(k,\ell) = (\phi_g(k),\ell)$. The corresponding composition operator on $C_0(X \times L)$ shall be denoted by $\widetilde{g}$.
These operators form a group, which we denote by $\widetilde{G}$. By Proposition \ref{p:characterize-SOT-convergence}, the map $g \mapsto \widetilde{g}$ and its inverse are SOT-continuous group isomorphisms.

It is easy to observe that the homeomorphisms $\{\psi_g : g \in G\}$ act locally equicontinuously over $X \times L$.  Furthermore, since $L$ has no isolated points, the orbit of each point $(k,l)$ is nowhere dense. Theorem \ref{t:main-theorem} provides us with an equivalent norm $\tri{\cdot}$ on $C_0(X \times L)$ so that $G = \isom(C_0(X \times L), \triple{\cdot})$. 

The three ``moreover'' statements arise from a choice of $L$.
\end{proof}

\begin{remark}
Denote by ${\mathbb{T}}$ the one-dimensional torus; we can view it as a group of rotations.
	Theorem \ref{t:expanded-theorem} shows that $\mathbb T$ is displayable on the real Banach lattice $C({\mathbb{T}}^2)$. This contrasts with the Banach space situation: \cite[Theorem 29]{Fer11} shows that  ${\mathbb{T}}$ is not displayable on a real Banach space.   
\end{remark}

We also have the following: 

\begin{corollary}\label{c:everything displayed}
	Any locally compact Polish group $G$ is displayable on $C_0(X)$ for some locally compact Polish space $X$.  In addition, $X$ can be chosen such that any closed subgroup $H$ of $G$ is also displayable in $C_0(X)$.
\end{corollary}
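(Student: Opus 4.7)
The plan is to display $G$ via its left regular representation on $C_0(G)$ and then adjoin a factor with no isolated points via Theorem~\ref{t:expanded-theorem}. First, by Struble's theorem, the locally compact Polish group $G$ carries a compatible left-invariant proper (hence complete) metric $d$, under which every left translation $L_a : h \mapsto ah$ is an isometry of $(G,d)$. Define the left regular representation $\rho : G \to \isom(C_0(G))$ by $[\rho(g)x](h) = x(g^{-1}h)$, so that the underlying homeomorphism is $\phi_{\rho(g)} = L_{g^{-1}}$. Since each $L_{g^{-1}}$ is a $d$-isometry, the family $\{\phi_{\rho(g)} : g \in G\}$ is uniformly, hence locally, equicontinuous.

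The crucial step is to verify that $\rho$ is topologically faithful and that $\rho(G)$ is relatively SOT-closed in $\isom(C_0(G))$. Suppose $\rho(g_n) \to T$ in SOT with $T$ a lattice isometry, and let $\phi$ be the homeomorphism implementing $T$. By local equicontinuity and Proposition~\ref{p:characterize-SOT-convergence}, $\phi_{\rho(g_n)} \to \phi$ uniformly on compact subsets of $G$; evaluating at the identity $e$ gives $g_n^{-1} \to \phi(e)$ in $(G,d)$. Setting $g := \phi(e)^{-1}$ and using continuity of multiplication, $\phi(h) = g^{-1}h$ for all $h$, so $T = \rho(g)$. The same argument shows that $\rho(g_n) \to \rho(g)$ in SOT forces $g_n \to g$ in $G$, establishing topological faithfulness.

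Theorem~\ref{t:expanded-theorem}(1), applied with $X = G$ and factor $[0,1]$, then yields an equivalent lattice norm on $C_0(G \times [0,1])$ for which $G$ (identified with $\rho(G)$ and its product-extension) is the full group of lattice isometries. For the final assertion, fix a closed subgroup $H \leq G$ and consider the subgroup of $\isom(C_0(G \times [0,1]))$ obtained from the product action $(h,t) \mapsto (g^{-1}h, t)$ for $g \in H$. Relative SOT-closedness is inherited from closedness of $H$ in $G$ (via the same argument as above restricted to $H$), and local equicontinuity is immediate from that of the ambient group. The orbits on $G \times [0,1]$ are $Hh \times \{t\}$, which are nowhere dense because $\{t\}$ is nowhere dense in $[0,1]$, irrespective of whether $Hh$ is dense in $G$. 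Hence Theorem~\ref{t:main-theorem} produces a renorming of $C_0(G \times [0,1])$ realizing $H$ as its lattice isometry group.

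The main obstacle is the relative SOT-closedness argument in the second paragraph: one must show that any SOT-limit of operators $\rho(g_n)$ is again of the form $\rho(g)$, not some more exotic lattice isometry. The key insight is that a limit of left translations is left-equivariant and therefore determined by its value at a single point, which forces the sequence $(g_n)$ itself to converge in $G$.
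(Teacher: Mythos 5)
Your argument is correct, and it differs from the paper's at the one step where an external input is needed. The paper invokes the Malicki--Solecki realization theorem: every locally compact Polish group $G$ is topologically isomorphic to the \emph{full} group of surjective isometries of some locally compact Polish space $Z$; it then applies Theorem \ref{t:expanded-theorem} to $C_0(Z\times[0,1])$, with relative SOT-closedness (of $G$ and of its closed subgroups) coming essentially for free from that realization, since a uniform-on-compacta limit of surjective isometries of $Z$ is again one. You instead take $X=G$ itself with the left regular representation, using Struble's theorem to supply a compatible proper left-invariant metric under which all translations are isometries, and you verify relative SOT-closedness by hand: a limit of left translations is determined by its value at $e$, which forces convergence of $(g_n)$ in $G$ and identifies the limit as $\rho(g)$. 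Both proofs then funnel through Theorem \ref{t:expanded-theorem} with the factor $[0,1]$ supplying the nowhere-dense orbits, and the closed-subgroup assertion goes through identically in either setup. What your version buys is a lighter external ingredient and a completely explicit display space $G\times[0,1]$, at the cost of the (short) closedness verification; what the paper's version buys is that the closedness of the acting group is structural rather than computed, at the cost of an abstract, non-explicit $Z$. One small point worth making explicit in your second paragraph: the SOT-limit $T$ is a lattice isometry of $(C_0(G),\|\cdot\|_\infty)$, so its weight function is automatically $\equiv 1$ and $T$ really is a pure composition operator — this is implicit in the paper's conventions and is what licenses writing $T$ as composition with $\phi$.
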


\begin{proof}
	By \cite[Theorem 2.1]{Malicki09}, there exists a locally compact Polish space $Z$ such that $G$ is isomorphic to the group of surjective isometries of $Z$.
By Theorem \ref{t:expanded-theorem}, $G$, and all of its closed subgroups, are displayable on $C_0(Z \times [0,1])$.
\end{proof}

\section{Bounded groups of lattice isomorphisms}\label{s:isomorphisms}

In this section, we broaden our attention from groups of lattice isometries to groups of lattice isomorphisms (which again are assumed to be surjective).
We shall denote the group of lattice isomorphisms of a Banach lattice $Z$ by ${\mathrm{ISO}}^{\approx}(Z)$.
A group $G \subset {\mathrm{ISO}}^{\approx}(Z)$ is called \emph{bounded} if there is a constant $C$ such that for all $g\in G$ and $x\in Z$, $\frac{1}{C} \|x\| \leq \|gx \| \leq C \|x\|$.  If $G$ is bounded, 
then it acts isometrically on $(Z, \| \cdot \|_G)$, where \[ \|x\|_G = \sup_{g\in G} \|gx\| ; \] is  an equivalent norm on $Z$.
Note that $\| \cdot \|_G$ is a lattice norm. Moreover, if $\| \cdot \|$ is an AM-norm, then the same is true for $\| \cdot \|_G$. For more information on AM-norms on $C(K)$-spaces, see \cite{CDvR}.

Here we will examine displays of bounded groups of lattice isomorphisms on $C_0(X)$ ($X$ locally compact metrizable).
First recall from Lemma \ref{p:isomorphism-characterization} that each $g\in {\mathrm{ISO}}^{\approx}(C_0(X))$ is determined by a continuous weight function $a_g:X\rightarrow \R$ and underlying homeomorphism $\phi_g:X\rightarrow X$ such that for all $f\in C_0(X)$, $gf = a_g \cdot f\circ \phi_g$. 
For $g,h \in {\mathrm{ISO}}^{\approx}(C_0(X))$, we have $(hg)(f) = (a_h \cdot a_g\circ\phi_h ) \cdot f\circ (\phi_g \phi_h)$, so $\phi_{hg} = \phi_g\circ \phi_h$, and $a_{hg} = a_h \cdot a_g\circ\phi_h$.
In particular $\phi_{g^{-1}} = \phi_g^{-1}$, and $a_{g^{-1}} = 1/a_g\circ\phi_{g}^{-1}$.

 \begin{theorem}\label{t: bounded-group-weighted-norm}
 	Let $G$ be a bounded group of isomorphisms on $C_0(X)$.  Then there exists a (not necessarily continuous) function $m_G:X\rightarrow \R^+$ such that $\| x\|_G = \| m_G \cdot x\|_\infty $. 
 \end{theorem}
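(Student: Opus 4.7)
The plan is to unwind the definition of $\| \cdot \|_G$ using the representation of lattice isomorphisms from Proposition \ref{p:isomorphism-characterization}, and then simply interchange two suprema.

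First I would compute $\|g x\|_\infty$ for a single $g \in G$. Since $[gx](y) = a_g(y)\, x(\phi_g(y))$, making the substitution $s = \phi_g(y)$ (valid because $\phi_g$ is a bijection of $X$) gives
\[
\|gx\|_\infty = \sup_{y \in X} \bigl|a_g(y)\, x(\phi_g(y))\bigr| = \sup_{s \in X} \bigl|a_g(\phi_g^{-1}(s))\bigr| \cdot |x(s)|.
\]
Then I would define
\[
m_G(s) := \sup_{g \in G} a_g(\phi_g^{-1}(s)) \qquad (s \in X).
\]
Two sanity checks: boundedness of $G$ gives $\|g\| = \|a_g\|_\infty \leq C$ for all $g \in G$, so $m_G(s) \leq C < \infty$ everywhere; and since the identity lies in $G$ with $a_e \equiv 1$, we get $m_G(s) \geq 1 > 0$. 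Thus $m_G : X \to \R^+$ is well-defined (though, as the statement warns, there is no reason for it to be continuous, since the suprema over $g$ need not preserve continuity).

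Finally, swapping the order of the two suprema (legitimate because the integrand factors as a product of a $g$-dependent and an $s$-dependent non-negative quantity) yields
\[
\|x\|_G = \sup_{g \in G} \|gx\|_\infty = \sup_{g \in G} \sup_{s \in X} a_g(\phi_g^{-1}(s)) \, |x(s)| = \sup_{s \in X} m_G(s)\, |x(s)| = \|m_G \cdot x\|_\infty,
\]
as claimed. There is no real obstacle here; the only subtlety is the change of variables in the first step, which must use $\phi_g^{-1}$ (not $\phi_g$) so that the weight and the point of evaluation of $x$ end up decoupled in the right way for Fubini-style interchange of suprema.
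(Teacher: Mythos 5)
Your proof is correct and is essentially the paper's argument: both rest on the bijectivity of $\phi_g$ and an interchange of suprema, the only cosmetic difference being that the paper sets $m_G = 1/\inf_{g\in G} a_g$ while you set $m_G(s)=\sup_{g\in G} a_g(\phi_g^{-1}(s))$. These coincide, since the substitution $g\mapsto g^{-1}$ together with the identity $a_{g^{-1}} = 1/(a_g\circ\phi_g^{-1})$ turns one formula into the other.
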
 

\begin{proof}
	Let $m(t) = \inf_{g\in G} a_g(t)$. We now show that $\|x\|_G \leq 1$ iff $|x| \leq m$. Given $x \in C_0(X)$, we have:
	
	\begin{align*}
		\|x\|_G \leq 1 \iff & \forall g \forall t \ |(gx)(t) | \leq 1 \\
		\iff & \forall g \forall t \ |a_g(t) \cdot x(\phi_g(t)) | \leq 1 \\
		\iff &\forall g\forall t \ |x(\phi_g(t)) | \leq a_{g^{-1}}(\phi_g(t)) \\
		\iff &\forall t \forall g \ |x(t) | \leq a_{g^{-1}} (t) \\
		\iff &\forall t \ \ \ \ |x(t) | \leq m(t) \\
		\iff & |x| \leq m .
	\end{align*}

Since $G$ is bounded, $m$ is bounded below, so let $m_G = 1/m$.
\end{proof}

\begin{corollary}\label{nice renorming}
	Suppose the hypotheses of Theorem \ref{t: bounded-group-weighted-norm} are valid and, in addition, the family $\{a_g : g \in G\}$ is locally equicontinuous on $X$. Then $m_G$  is continuous. 
\end{corollary}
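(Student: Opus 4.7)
The plan is to show that the function $m(t) = \inf_{g \in G} a_g(t)$ constructed in the proof of Theorem \ref{t: bounded-group-weighted-norm} is continuous; since $G$ is bounded, $m$ is bounded away from $0$, so continuity of $m_G = 1/m$ will follow immediately.

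First I would observe the ``free'' half: since each $a_g$ is continuous, and the infimum of any family of continuous functions is upper semicontinuous (the sets $\{t : m(t) < c\} = \bigcup_{g \in G}\{t : a_g(t) < c\}$ are open), $m$ is automatically upper semicontinuous on $X$, no hypothesis required.

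The content of the corollary is therefore the lower semicontinuity of $m$, and this is where local equicontinuity enters. Fix $t_0 \in X$ and $\varepsilon > 0$. Choose a compact neighborhood $K$ of $t_0$; by the assumed local equicontinuity of $\{a_g : g \in G\}$ on $K$, there exists $\delta > 0$ so that
\[
|a_g(s) - a_g(t_0)| < \varepsilon \quad \text{for all } g \in G \text{ and all } s \in K \text{ with } d(s,t_0) < \delta.
\]
Then for every such $s$ and every $g$, $a_g(s) > a_g(t_0) - \varepsilon \geq m(t_0) - \varepsilon$, and taking the infimum over $g \in G$ yields $m(s) \geq m(t_0) - \varepsilon$. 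Hence $\liminf_{s \to t_0} m(s) \geq m(t_0)$, which is lower semicontinuity at $t_0$. Combined with the upper semicontinuity noted above, $m$ is continuous at $t_0$, and since $t_0$ was arbitrary, $m$ is continuous on $X$.

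I do not expect any genuine obstacle: the argument is really just the standard fact that a pointwise infimum of a locally equicontinuous family is continuous. The only thing to check carefully is that ``local equicontinuity on compact sets'' is enough to run the $\varepsilon$--$\delta$ step uniformly in $g$ on a neighborhood of a single point, which is immediate from picking any compact neighborhood of $t_0$.
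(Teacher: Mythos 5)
Your proof is correct and is essentially the argument in the paper: both use local equicontinuity of $\{a_g\}$ near a point to control $|a_g(s)-a_g(t_0)|$ uniformly in $g$ and pass the estimate through the infimum defining $m$, then divide by the lower bound to get continuity of $m_G$. Your splitting into upper semicontinuity (free) and lower semicontinuity (from equicontinuity) is a cosmetic repackaging of the paper's two-sided inequality $|m(s)-m(t)|\leq\varepsilon$, though your explicit restriction to a compact neighborhood of $t_0$ is a slightly more careful use of the ``locally equicontinuous'' hypothesis.
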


\begin{proof}
	  We will show that $m = 1/m_G$, defined in the proof for Theorem \ref{t: bounded-group-weighted-norm}, is continuous.  Since $m$ is bounded away from $0$, $m_G$ is also continuous. \\
	  Let $t\in X$ and $\varepsilon > 0$, and pick $\delta$ such that for all $s\in X$, $d(s,t) < \delta $ implies for all $g\in G, \ |a_g(s) - a_g(t) | < \varepsilon$. Then
   $$m(s)=\inf_{g\in G} | a_g(s) |  \leq \sup_{g\in G}  |a_g(s) - a_g(t) | + \inf_{g\in G} |a_g(t)| \leq \varepsilon + m(t). $$
	  Similarly, we have $m(t) \leq \varepsilon +m(s)$, so $|m(t) - m(s)| \leq \varepsilon$.
\end{proof}

\begin{remark}
The local equicontinuity of the family $(a_g)_{g \in G}$ cannot be omitted from the preceding proofs. For instance, let $X$ be the one-point compactification of $\{0,1\} \times \N$; that is, $X$ consists of points $(i,k)$ ($i \in \{0,1\}$ and $k \in \N$), as well as $\infty$.
    Equip $X$ with the following metric: $d \big( (i,k), (j,m) \big) = 2^{-\min\{k,m\}}$, $d \big( (i,k), \infty \big) = 2^{-k}$.

For $n \in \N$ define $a_n(t) = 2$ if $t = (0,n)$, $a_n(t) = 1/2$ if $t = (1,n)$, and $a_n(t) = 1$ otherwise. Further, let $\phi_n(i,n)= (1-i,n)$, and $\phi_n(t) = t$ for $t \notin \{(0,n),(1,n)\}$.
For $x \in C(X)$ let $g_n(x) = a_n \cdot x \circ \phi_n$. Note that $g_n$'s commute, and $g_n^2 = I$. One can show that $\{g_n : n \in \N\}$ is a bounded group of lattice isomorphisms on $C(X)$.

However, it is easily seen that the family $(a_n)$ is not equicontinuous (look at the vicinity of $\infty$). The discontinuity of $m$ is witnessed by $m(\infty) = 1$, while $m((1,n)) = 1/2$.
Also, the map $\Phi^{-1}$ is not SOT-continuous: $(g_n)$ is SOT-divergent (as can be seen by applying these maps to the constant one function), while $(\Phi g_n)$ converges in SOT to $I_{C(X)}$.
\end{remark}

 For $\phi \in C_b(X)$ (the space of bounded continuous functions on $X$), define the multiplication operator $\mult_\phi : C_0(X) \to C_0(X) : f \mapsto \phi f$.
 Define the map $\Phi$ on $B(C_0(X))$: for an operator $T$ on $C_0(X)$, let $\Phi T = \mult_{m_G} T \mult_{m_G^{-1}}$. Clearly $\Phi$ is a surjective algebraic isomorphism; also, both $\Phi$ and its inverse are SOT-continuous.

 Further, for any $x \in C_0(X)$ and $g \in G$, we have $\|m_G \cdot gx\|_\infty = \|m_G \cdot x\|_\infty$. Taking $y = m_G \cdot x$, we conclude that $\|m_G g(y/M_G)\|_\infty = \|y\|_\infty$. Thus, $\Phi(g)$ is an isometry on $C_0(X)$.

Summarizing the two preceding paragraphs, we obtain:

\begin{corollary}\label{handle groups}
Suppose $X$ is a locally compact metrizable space, and suppose $G$ is a bounded group of lattice isomorphisms of $C_0(X)$ for which the family $\{a_g : g \in G\}$ is locally equicontinuous on $X$.
Then there exists a group homomorphism $\Phi$, taking $G$ to a group $G'$ of lattice isometries of $C_0(X)$. Moreover, $\Phi$ is both norm and SOT continuous.
\end{corollary}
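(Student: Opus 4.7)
The plan is to define $\Phi$ explicitly by conjugation with the multiplication operator associated to the function $m_G$ from Corollary \ref{nice renorming}, and then verify in sequence that it is a group homomorphism, that its image consists of lattice isometries, and that it is norm- and SOT-continuous.

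First I would apply Corollary \ref{nice renorming} to obtain a continuous $m_G : X \to (0,\infty)$ such that $\|x\|_G = \|m_G \cdot x\|_\infty$ for every $x \in C_0(X)$. Boundedness of $G$ ensures that $0 < \inf_X m_G \leq \sup_X m_G < \infty$, so both $m_G$ and $m_G^{-1}$ belong to $C_b(X)$, and the associated multiplication operators $\mult_{m_G}, \mult_{m_G^{-1}} \in B(C_0(X))$ are mutually inverse bounded lattice isomorphisms. Set $\Phi(T) := \mult_{m_G}\, T\, \mult_{m_G^{-1}}$; this is a Banach algebra automorphism of $B(C_0(X))$, and since its conjugating factors are themselves lattice isomorphisms, $\Phi$ carries lattice isomorphisms to lattice isomorphisms. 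Combined with $\Phi(ST) = \Phi(S)\Phi(T)$, this makes $\Phi$ a group homomorphism from $\isom^{\approx}(C_0(X))$ to itself, which in particular sends $G$ into $\isom^{\approx}(C_0(X))$.

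To see that $\Phi(g)$ is a lattice \emph{isometry} when $g \in G$, pick $y \in C_0(X)$ and set $x := m_G^{-1} y$. Using Theorem \ref{t: bounded-group-weighted-norm} together with the $G$-invariance of $\|\cdot\|_G$,
\[
\|\Phi(g)\, y\|_\infty \;=\; \|m_G \cdot g(x)\|_\infty \;=\; \|g x\|_G \;=\; \|x\|_G \;=\; \|m_G \, x\|_\infty \;=\; \|y\|_\infty,
\]
so $\Phi(g) \in \isom(C_0(X), \|\cdot\|_\infty)$; thus $G' := \Phi(G)$ is the desired group of lattice isometries.

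Finally, norm continuity of $\Phi$ is automatic from $\|\Phi(T) - \Phi(S)\| \leq \|m_G\|_\infty \|m_G^{-1}\|_\infty \|T - S\|$, and SOT continuity follows because if $T_n \to T$ in SOT, then for every $y \in C_0(X)$, the vector $\Phi(T_n)(y) = \mult_{m_G}\bigl(T_n(\mult_{m_G^{-1}} y)\bigr)$ converges in norm to $\Phi(T)(y)$, since $T_n(\mult_{m_G^{-1}} y) \to T(\mult_{m_G^{-1}} y)$ in norm and $\mult_{m_G}$ is a bounded operator. The only nontrivial ingredient in the whole argument is the continuity of $m_G$, supplied by Corollary \ref{nice renorming}; once that is in hand, the remaining verifications are formal.
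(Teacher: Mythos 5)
Your proposal is correct and follows essentially the same route as the paper: conjugation by the multiplication operator $\mult_{m_G}$, with the continuity of $m_G$ supplied by Corollary \ref{nice renorming} and the isometry property of $\Phi(g)$ read off from the identity $\|m_G\cdot gx\|_\infty=\|gx\|_G=\|x\|_G=\|m_G\cdot x\|_\infty$. You simply spell out the norm- and SOT-continuity checks that the paper dispatches with ``clearly.''
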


In light of Theorem \ref{t:main-theorem}, this immediately implies:

\begin{corollary}\label{display of bounded groups}
    Suppose $X$ is a locally compact Polish space, and suppose $G$ is a bounded relatively SOT-closed group of lattice isomorphisms of $C_0(X)$ for which the families $\{a_g : g \in G\}$ and $\{ \phi_g: g\in G\}$ are locally equicontinuous on $X$.  Suppose also that orbits in $X$ induced by $\{ \phi_g: g\in G\}$ are nowhere dense.    Then $G$ can be displayed on $C_0(X)$.
\end{corollary}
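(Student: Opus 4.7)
The plan is to reduce to Theorem \ref{t:main-theorem} via the conjugation $\Phi(T) = \mult_{m_G}\, T\, \mult_{m_G^{-1}}$ supplied by Corollary \ref{handle groups}. Since $\{a_g\}_{g\in G}$ is locally equicontinuous, Corollary \ref{nice renorming} ensures $m_G$ is continuous as well as bounded above and below, so $\Phi$ is an injective, SOT-continuous group homomorphism carrying $G$ onto a subgroup $G' \subset \isom(C_0(X),\|\cdot\|_\infty)$ of lattice isometries. A short computation using the cocycle identity $a_{hg}(t) = a_h(\phi_g(t))\,a_g(t)$ together with the $G$-invariance of the infimum defining $m$ yields $m_G(\phi_g(t)) = a_g(t)\,m_G(t)$; this reconfirms that $\Phi(g)$ is a $\|\cdot\|_\infty$-isometry and, crucially, shows $\phi_{\Phi(g)} = \phi_g$.

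Next I would verify that $G'$ satisfies the hypotheses of Theorem \ref{t:main-theorem}. Because $\phi_{\Phi(g)} = \phi_g$, the orbits of $G'$ in $X$ coincide with those of $G$ (and are therefore nowhere dense) and the family $\{\phi_h\}_{h\in G'}$ is locally equicontinuous. For the relative SOT-closedness, suppose $(h_n)\subset G'$ converges in SOT to some $h \in \isom(C_0(X),\|\cdot\|_\infty)$. Then $\Phi^{-1}(h_n) = \mult_{m_G^{-1}}\,h_n\,\mult_{m_G}$ converges in SOT to $g := \mult_{m_G^{-1}}\,h\,\mult_{m_G}$, which lies in $\isom^{\approx}(C_0(X))$ because conjugation by the lattice isomorphism $\mult_{m_G}$ preserves lattice isomorphisms. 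The hypothesis that $G$ is relatively SOT-closed in $\isom^{\approx}(C_0(X))$ then forces $g \in G$, whence $h = \Phi(g) \in G'$.

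I would then apply Theorem \ref{t:main-theorem} to $G'$ to obtain an equivalent lattice norm $\tri{\cdot}_0$ on $C_0(X)$ with $\isom(C_0(X),\tri{\cdot}_0) = G'$, and pull it back by setting $\tri{f} := \tri{m_G\,f}_0$. Positivity of $m_G$ ensures $\tri{\cdot}$ is a lattice norm, and the two-sided boundedness of $m_G$ makes it equivalent to $\|\cdot\|_\infty$. A direct calculation gives $\tri{gf} = \tri{m_G\,gf}_0 = \tri{\Phi(g)(m_G f)}_0 = \tri{m_G f}_0 = \tri{f}$ for every $g \in G$, so $G \subset \isom(C_0(X),\tri{\cdot})$. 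Conversely, if $T \in \isom(C_0(X),\tri{\cdot})$, then for all $f$ one has $\tri{\Phi(T)f}_0 = \tri{m_G\,T(m_G^{-1}f)}_0 = \tri{T(m_G^{-1}f)} = \tri{m_G^{-1}f} = \tri{f}_0$, so $\Phi(T) \in G'$ and hence $T = \Phi^{-1}(\Phi(T)) \in G$.

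The chief obstacle I anticipate is the bookkeeping in the second paragraph: one must confirm that $\Phi^{-1}$ actually moves an SOT limit from the isometry group back into $\isom^{\approx}(C_0(X))$ so that the closedness hypothesis on $G$ can be invoked, and dually that the pulled-back norm $\tri{\cdot}$ has isometry group exactly $G$ (not merely a group containing $G$). Both points reduce to the identity $m_G(\phi_g(t)) = a_g(t)\,m_G(t)$ and to $\mult_{m_G}$ being a lattice isomorphism of $C_0(X)$ onto itself, so no further new ideas are required beyond Corollaries \ref{handle groups} and \ref{nice renorming} and the machinery of Theorem \ref{t:main-theorem}.
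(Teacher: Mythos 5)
Your proposal is correct and follows the same route the paper intends: conjugate by $\mult_{m_G}$ (Corollaries \ref{nice renorming} and \ref{handle groups}) to pass to the isometry group $G'$, check that the hypotheses of Theorem \ref{t:main-theorem} transfer because $\phi_{\Phi(g)}=\phi_g$, and apply that theorem. The paper states this as immediate; your write-up merely fills in the details it leaves implicit (the identity $m_G(\phi_g(t))=a_g(t)m_G(t)$, the transfer of relative SOT-closedness, and the pullback $\tri{f}:=\tri{m_G f}_0$ making $G$ itself the isometry group), all of which are sound.
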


One may also ask what groups can be presented on $C_b(X)$ (the space of bounded continuous functions on the Hausdorff space $X$, equipped with the $\sup$ norm).
The following result shows that, at least for locally compact $X$, no extra groups arise.

 \begin{proposition}\label{p:isomorphisms of Cb}
  Suppose $X$ is a metric space.
Then any lattice isomorphism $T$ on $C_b(X)$ is of the form $[Tx](t) = a(t) x(\phi(t))$, where $a \in C_b(X)$ with $\inf a > 0$, and $\phi : X \to X$ is a homeomorphism. Moreover, $\|a\|_\infty = \|T\|$.
 \end{proposition}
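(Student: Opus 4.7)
The plan is to reduce to the compact case by passing to the Stone-Čech compactification, apply Proposition \ref{p:isomorphism-characterization} on $C(\beta X)$, and then verify that the resulting homeomorphism of $\beta X$ preserves the subspace $X$.

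Since $X$ is metric (hence Tychonoff), the Stone-Čech extension map $f \mapsto \tilde{f}$ is a Banach lattice isomorphism $C_b(X) \cong C(\beta X)$, so $T$ corresponds to a lattice isomorphism $\tilde{T}$ on $C(\beta X)$. Applying Proposition \ref{p:isomorphism-characterization} to the compact space $\beta X$ furnishes a homeomorphism $\phi : \beta X \to \beta X$ and a continuous $a : \beta X \to (0,\infty)$, bounded away from $0$ and $\infty$, with $[\tilde{T} f](y) = a(y)\, f(\phi(y))$ for all $f \in C(\beta X)$ and $y \in \beta X$.

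The crucial step is verifying $\phi(X) = X$. I will establish the characterization that $y \in \beta X$ lies in $X$ if and only if there exists $g \in C_b(X)_+$ whose Stone-Čech extension has zero set exactly $\{y\}$ in $\beta X$. For the ``only if'' direction, take $f_t(s) := \min(1, d(s,t))$: any net $s_\alpha \to y$ in $\beta X$ with $f_t(s_\alpha) \to 0$ forces $s_\alpha \to t$ in the metric, so $y = t$ by Hausdorffness of $\beta X$. For the ``if'' direction, I pick $s_n \in X$ with $g(s_n) < 1/n$; every cluster point of $(s_n)$ in $\beta X$ is a zero of $\tilde{g}$, hence equals $y$, so $s_n \to y$ in $\beta X$. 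If a subsequence of $(s_n)$ converges in the metric of $X$, its limit must be $y$, giving $y \in X$. Otherwise no value is repeated infinitely often, so after passing to a subsequence with pairwise distinct terms, $\{s_n\}$ is closed and discrete in $X$; Tietze's extension theorem then produces $h \in C_b(X)$ with $h(s_n) = (-1)^n$, but the continuity of $\tilde{h}$ at $y$ forces $h(s_n)$ to converge, a contradiction.

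With the characterization in hand, for any $t \in X$ the function $T^{-1} f_t \in C_b(X)_+$ has Stone-Čech extension $y \mapsto \tilde{f}_t(\phi^{-1}(y))/a(\phi^{-1}(y))$, whose zero set in $\beta X$ is exactly $\{\phi(t)\}$; hence $\phi(t) \in X$. The same reasoning applied to $T$ in place of $T^{-1}$ yields $\phi^{-1}(X) \subseteq X$, so $\phi$ restricts to a homeomorphism of $X$, and $a|_X \in C_b(X)$ has positive infimum. The formula $[Tf](t) = a(t) f(\phi(t))$ for $t \in X$ then follows by restriction. The estimate $\|T\| \leq \|a\|_\infty$ is immediate, while $T(\mathbf{1}) = a|_X$ together with the density of $X$ in $\beta X$ gives $\|T\| \geq \|a|_X\|_\infty = \|a\|_\infty$. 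The main obstacle is the ``if'' direction of the characterization: ruling out a purported remote single zero of $\tilde{g}$ requires producing a bounded continuous oscillating function along an approximating sequence, and this is where the metric hypothesis on $X$ (via Tietze's theorem) is indispensable.
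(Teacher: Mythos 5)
Your proof is correct and follows essentially the same route as the paper: identify $C_b(X)$ with $C(\beta X)$, represent $T$ there via Proposition \ref{p:isomorphism-characterization}, and show $\phi$ preserves $X$ using the characterization of points of $X$ inside $\beta X$ as the unique zeros of suitable functions. The only difference is that the paper cites this characterization from the literature (\cite[Lemma 7.12]{GvR}), whereas you prove it directly for metric spaces via the distance function and a Tietze-extension argument along a discrete approximating sequence, which makes the argument self-contained but does not change its substance.
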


 In particular, if $X$ is a locally compact metric space, then any $T$ as above maps $C_0(X)$ onto itself, and moreover, is completely determined by its action on $C_0(X)$.
 Consequently, if $X$ is a locally compact Polish space, then, by Corollary \ref{display of bounded groups}, any relatively SOT-closed bounded group of lattice isomorphisms of $C_b(X)$ satisfying the equicontinuity and density conditions mentioned in Corollary \ref{display of bounded groups}  can be displayed on $C_0(X)$.
 
 \begin{proof}
  We identify $C_b(X)$ with $C(\beta X)$, where $\beta X$ is the Stone-{\v{C}}ech compactification of $X$. For $x \in C_b(X)$, we denote by $x^\beta$ its unique extension to $\beta X$.
  Likewise, for $f \in C(\beta X)$, we shall use the notation $f_\beta$ for $f \circ \beta \in C_b(X)$.
  Then there exist $a \in C(\beta X)$ with $\inf a > 0$, and a homeomorphism $\phi : \beta X \to \beta X$ so that, for any $x \in C(\beta X)$ and $\omega \in \beta X$, we have $[Tx](\omega) = a(\omega) x(\phi(\omega))$. We have to show that $\phi^{-1}$ (and likewise, $\phi$) maps $X$ to itself.
  
  By \cite[Lemma 7.12]{GvR}, $\alpha \in \beta X$ belongs to $X$ iff there exists $x \in C(\beta X)$ so that $x$ vanishes at $\alpha$, and nowhere else. 
  For such $\alpha$ and $x$, $[Tx](\omega) = 0$ iff $\phi(\omega) = \alpha$, or in other words, $\omega = \phi^{-1}(\alpha)$.  Consequently, the function $Tx \in C(\beta X)$ vanishes at $\phi^{-1}(\alpha)$, and only there. Thus, $\phi^{-1}(\alpha) \in X$.
 \end{proof}

\section{Questions for further research}

\begin{question}
Can Theorem $\ref{t:main-theorem}$ be generalized to $AM$-spaces? \cite{OT_AM} provides a partial answer: any separable AM-space can be renormed to have trivial lattice isometries only.
In \cite{OT_AM}, a key step is to switch to a special kind of an AM-space called a \textit{regular AM space}. Thus, one approach would consist of displaying groups on regular AM spaces.
\end{question}

\begin{question}


 Can Theorem $\ref{t:main-theorem}$ be extended to the spaces $C_0(X)$, where $X$ has isolated points? In particular, what happens when $X = \N$ (and $C_0(X)$ is simply $c_0$)?
 Note that, by \cite{Fer11}, if $G$ is a closed subgroup of the infinite permutation group $S_\infty$, then $G \times \{-1,1\}$ can be displayed on $S_\infty$, in the Banach space sense. However, the new norm on $c_0$ producing this display is not a lattice norm.
\end{question}

\begin{question}
	Is every Polish group displayable in a separable lattice for some lattice $X$? 

To motivate this question, recall that, by \cite[Theorem 31]{Fer11}, for any Polish group $G$ there exists a separable real Banach space $X$ so that $G \times \{-1,1\}$ is displayable on $X$. The $\{-1,1\}$ multiple is important here, as any group displayable on a real Banach space must include a central involution.
\end{question}

\end{document}